\def\eqref#1{equation~(\ref{#1})}
\def\1{\bf{1}}
\newcommand{\norm}[1]{\left\| #1 \right\|_2}
\def\eps{{\varepsilon}}
\def\vx{{\bf{x}}}
\def\vy{{\bf{y}}}
\def\fA{{\mathcal{A}}}
\def\fB{{\mathcal{B}}}
\def\fN{{\mathcal{N}}}
\def\fO{{\mathcal{O}}}
\def\fV{{\mathcal{V}}}
\def\BE{{\mathbb{E}}}
\def\BR{{\mathbb{R}}}
\newcommand{\E}{\mathbb{E}}
\DeclareMathOperator*{\argmax}{arg\,max}
\DeclareMathOperator*{\argmin}{arg\,min}
\theoremstyle{plain}
\newtheorem{thm}{Theorem}[section]
\newtheorem{dfn}{Definition}[section]
\newtheorem{lem}{Lemma}[section]
\newtheorem{asm}{Assumption}[section]
\newtheorem{cor}{Corollary}[section]
\def\Ddots{\mathinner{\mkern1mu\raise\p@
\vbox{\kern7\p@\hbox{.}}\mkern2mu
\raise4\p@\hbox{.}\mkern2mu\raise7\p@\hbox{.}\mkern1mu}}
\newcommand*{\rom}[1]{\expandafter\@slowromancap\romannumeral #1@}
\title{Faster Stochastic Algorithms for Minimax Optimization under Polyak--{\L}ojasiewicz Conditions}
\author{%
	Lesi Chen \\
	School of Data Science \\ 
	Fudan University \\
	\texttt{lschen19@fudan.edu.cn}
	\And
 	Boyuan Yao\\
 	School of Data Science \\ 
	Fudan University \\
 	\texttt{byyao19@fudan.edu.cn}
 	\And
	Luo Luo\thanks{The corresponding author} \\
	School of Data Science \\ 
	Fudan University \\ 
	\texttt{luoluo@fudan.edu.cn}
}
\begin{document}

\maketitle

\begin{abstract}

This paper considers stochastic first-order algorithms for minimax optimization under Polyak--{\L}ojasiewicz (PL) conditions. 
We propose SPIDER-GDA for solving the finite-sum problem of the form $\min_x \max_y f(x,y)\triangleq \frac{1}{n} \sum_{i=1}^n f_i(x,y)$, where the objective function $f(x,y)$ is $\mu_x$-PL in $x$ and $\mu_y$-PL in $y$; and each $f_i(x,y)$ is $L$-smooth. We prove SPIDER-GDA could find an $\epsilon$-optimal solution within ${\mathcal O}\left((n + \sqrt{n}\,\kappa_x\kappa_y^2)\log (1/\epsilon)\right)$ stochastic first-order oracle (SFO) complexity, which is better than the state-of-the-art method whose SFO upper bound is ${\mathcal O}\big((n + n^{2/3}\kappa_x\kappa_y^2)\log (1/\epsilon)\big)$, where $\kappa_x\triangleq L/\mu_x$ and $\kappa_y\triangleq L/\mu_y$.
For the ill-conditioned case, we provide an accelerated algorithm to reduce the computational cost further. It achieves $\tilde{{\mathcal O}}\big((n+\sqrt{n}\,\kappa_x\kappa_y)\log (\kappa_y/\epsilon) \log(1/\epsilon)\big)$ SFO upper bound when $\kappa_y \gtrsim \sqrt{n}$. Our ideas can also be applied to a more general setting where the objective function only satisfies the PL condition for one variable. Numerical experiments validate the superiority of proposed methods.

\end{abstract}

\section{Introduction}
 
This paper focuses on the smooth minimax optimization problem of the form
\begin{align}\label{prob:main}
    \min_{x \in \BR^{d_x}} \max_{y \in \BR^{d_y}} f(x,y) \triangleq \frac{1}{n} \sum_{i=1}^n f_i(x,y),
\end{align}
which covers a lot of important applications in machine learning such as reinforcement learning~\cite{du2017stochastic,wai2018multi}, AUC maximization~\cite{guo2020communication,liu2019stochastic,ying2016stochastic}, 
imitation learning~\cite{cai2019global,nouiehed2019solving}, robust optimization ~\cite{duchi2019variance}, causal inference~\cite{meinshausen2018causality}, game theory~\cite{carmon2019variance,nash1953two} and so on.

We are interested in the minimax problems under PL conditions~\cite{nouiehed2019solving,yang2020global,doan2022convergence,yue2023lower}.
The PL condition~\cite{polyak1963gradient} was originally proposed to relax the strong convexity in minimization problem that is sufficient for achieving the global linear convergence rate for first-order methods. 
This condition has been successfully used to analyze the convergence behavior for overparameterized neural networks~\cite{liu2022loss}, robust phase retrieval~\cite{sun2018geometric}, and many other machine learning models~\cite{karimi2016linear}. 
There are many popular minimax formulations that only satisfy PL conditions but lack strong convexity (or strong concavity).
The examples include PL-game~\cite{nouiehed2019solving}, robust least square~\cite{yang2020global}, deep AUC maximization~\cite{liu2019stochastic} and generative adversarial imitation learning of LQR~\cite{cai2019global,nouiehed2019solving}. 

\citet{yang2020global} showed that the alternating gradient descent ascent
(AGDA) algorithm linearly converges to the saddle point when the objective function satisfies the two-sided PL condition. 
They also proposed the SVRG-AGDA method for the finite-sum problem (\ref{prob:main}), which could find $\epsilon$-optimal solution within $\fO\big((n+ n^{2/3} \kappa_x\kappa_y^2)\log(1/\epsilon)\big)$ stochastic first-order oracle (SFO) calls,\footnote{The original analysis ~\cite{yang2020global} provided an SFO upper bound $\fO\big((n+ n^{2/3} \max \{\kappa_x^3, \kappa_y^3 \})\log(1/\epsilon)\big)$, which can be refined to $\fO\big((n+ n^{2/3} \kappa_x\kappa_y^2)\log(1/\epsilon)\big)$ by some little modification in the proof.} where $\kappa_x$ and $\kappa_y$ are the condition numbers with respect to PL condition for $x$ and $y$ respectively. The variance reduced technique in the SVRG-AGDA leads to better a convergence rate than full batch AGDA whose SFO complexity is $\fO\big(n\kappa_x\kappa_y^2\log(1/\epsilon)\big)$.
However, there are still some open questions left. 
Firstly, \citet{yang2020global}'s theoretical analysis heavily relies on the alternating update rules. It remains interesting whether a simultaneous version of GDA (or its stochastic variants) also has similar convergence results. 
Secondly, it is unclear whether the SFO upper bound obtained by SVRG-AGDA can be improved by designing more efficient algorithms.

Under the one-sided PL condition, we desire to find the stationary point of $g(x)\triangleq \max_{y\in\BR^{d_y}}f(x,y)$, since the saddle point may not exist. \citet{nouiehed2019solving} proposed the Multi-Step GDA that achieves the $\epsilon$-stationary point within $\fO(\kappa_y^2 L \epsilon^{-2} \log(\kappa_y/\epsilon))$ number of full gradient iterations. A similar complexity is also proved by AGDA~\cite{yang2020global}.
Recently, \citet{yang2022faster} proposed the Smoothed-AGDA that improves the upper bound into $\fO(\kappa_y L \epsilon^{-2})$.
Both the Multi-Step GDA~\cite{nouiehed2019solving} and Smoothed-AGDA~\cite{yang2020catalyst} can be extended to the online setting, where we only assume the existence of an unbiased stochastic gradient with bounded variance.  But to the best of our knowledge, the formulation (\ref{prob:main}) with finite-sum structure has not been explored by prior works.

In this paper, we introduce a variance reduced first-order method, called SPIDER-GDA, which constructs the gradient estimator by stochastic recursive gradient~\cite{fang2018spider}, and the iterations are based on simultaneous gradient descent ascent~\cite{lin2020gradient}.
We prove that SPIDER-GDA could achieve $\epsilon$-optimal solution of the two-sided PL problem of the form (\ref{prob:main}) within $\fO\big((n+ \sqrt{n}\,\kappa_x\kappa_y^2)\log(1/\epsilon)\big)$ SFO calls, 
which has better dependency on $n$ than SVRG-AGDA~\cite{yang2020global}. We also provide an acceleration framework to improve first-order methods for solving ill-conditioned minimax problems under PL conditions. 
The accelerated SPIDER-GDA (AccSPIDER-GDA) could achieve $\epsilon$-optimal solution within $\fO\big((n+ \sqrt{n}\,\kappa_x\kappa_y)\log(\kappa_y/\epsilon) \log (1/\epsilon) \big)$ SFO calls when $\kappa_y \gtrsim \sqrt{n}$, which is the best known SFO upper bound for this problem.
We summarize our main results and compare them with related work in Table~\ref{tbl: two-side-PL}. Without loss of generality, we always suppose $\kappa_x \gtrsim \kappa_y$.
Furthermore, the proposed algorithms also work for minimax problem with one-sided PL condition. We present the results for this case in Table~\ref{tbl: one-side-PL}.

\begin{table}[t] 
\centering
\caption{We present the comparison of SFO complexities under two-sided PL condition. Note that \citet{yang2020global} named their stochastic algorithm as variance-reduced-AGDA (VR-AGDA). Here we call it SVRG-AGDA to distinguish with other variance reduced algorithms.}
\label{tbl: two-side-PL}
\begin{tabular}{ccc} 
\hline
Algorithm & Complexity & Reference \\
\hline\addlinespace
GDA/AGDA & $\fO\left(n\kappa_x \kappa_y^2 \log\left( 1/\epsilon\right)\right)$ & Theorem \ref{thm: GDA}, \cite{yang2020global} \\\addlinespace
SVRG-AGDA & $\fO\left( (n+ n^{2/3} \kappa_x \kappa_y^2) \log \left( 1/\epsilon\right)\right)$ &  \cite{yang2020global} \\\addlinespace
SVRG-GDA & $\fO\left( (n+ n^{2/3} \kappa_x \kappa_y^2) \log \left( 1/\epsilon\right)\right)$ & Theorem \ref{thm: SVRG-GDA} \\\addlinespace
SPIDER-GDA & $\fO\left( (n+ \sqrt{n} \kappa_x \kappa_y^2) \log \left( 1/\epsilon\right)\right)$ & Theorem \ref{thm: SPIDER-GDA}  \\\addlinespace
AccSPIDER-GDA & 
$
{\begin{cases}
\fO\left(\sqrt{n} \kappa_x \kappa_y \log \left(\kappa_y/\epsilon\right) \log (1/\epsilon)\right), &  \sqrt{n} \lesssim \kappa_y; \\[0.1cm]
\fO\left(n \kappa_x \log \left(\kappa_y/\epsilon\right) \log (1/\epsilon) \right), & \kappa_y \lesssim \sqrt{n} \lesssim \kappa_x \kappa_y; \\[0.1cm]
\fO\left((n+  \sqrt{n} \kappa_x \kappa_y^2) \log \left(1/\epsilon\right)\right), & \kappa_x \kappa_y \lesssim \sqrt{n}. 
\end{cases}}
$
& Theorem \ref{thm: Catalyst-GDA} \\\addlinespace
\hline
\end{tabular}
\end{table}

\begin{table}[t]
\centering
\caption{We present the comparison of SFO complexities under one-sided PL condition.} \label{tbl: one-side-PL}
\begin{tabular}{ccc} 
\hline
Algorithm & Complexity & Reference \\
\hline\addlinespace
Multi-Step GDA & $\fO(n \kappa_y^2 L \epsilon^{-2} \log(\kappa_y/\epsilon))$ & \cite{nouiehed2019solving} \\ \addlinespace
GDA/AGDA & $\fO\left(n\kappa_y^2L\epsilon^{-2}\right) $ & Theorem \ref{thm: GDA-one-side}, \cite{yang2020global} \\\addlinespace
Smooothed-AGDA & $\fO\left(n \kappa_y L\epsilon^{-2}\right)$ & \cite{yang2022faster} \\ \addlinespace 
SVRG-GDA & $\fO\left(n+  n^{2/3}\kappa_y^2L\epsilon^{-2} \right) $ & Theorem \ref{thm: SVRG-GDA-one}  \\\addlinespace
SPIDER-GDA & $ \fO\left(n+ \sqrt{n} \kappa_y^2L \epsilon^{-2} \right)$ & Theorem \ref{thm: SPIDER-GDA-one}  \\\addlinespace
AccSPIDER-GDA & 
$
\begin{cases}
\fO\left(\sqrt{n} \kappa_y L \epsilon^{-2} \log(\kappa_y/\epsilon)\right), & \sqrt{n} \lesssim \kappa_y; \\[0.15cm]
\fO\left(n L\epsilon^{-2} \log(\kappa_y/\epsilon)\right), & \kappa_y \lesssim \sqrt{n} \lesssim \kappa_y^2; \\[0.15cm]
\fO\left(n+\sqrt{n}\kappa_y^2L\epsilon^{-2}\right), & \kappa_y^2  \lesssim \sqrt{n}. \end{cases}
$
& Theorem \ref{thm: Catalyst-GDA-one-side} \\\addlinespace
\hline
\end{tabular}
\end{table}

\section{Related Work}

The minimax optimization problem (\ref{prob:main}) can be viewed as the following minimization problem
\begin{align*}
    \min_{x\in\BR^{d_x}} \bigg\{g(x)\triangleq \max_{y\in\BR^{d_y}}f(x,y)\bigg\}.
\end{align*}
A natural way to solve this problem is the Multi-step GDA~\cite{lin2020gradient,nouiehed2019solving,rafique2018non,luo2020stochastic} that contains double-loop iterations in which the outer loop can be regarded as running inexact gradient descent on $g(x)$ and the inner loop finds the approximate solution to $\max_{y\in\BR^{d_y}} f(x,y)$ for a given $x$. Another class of methods is the two-timescale (alternating) GDA algorithm~\cite{lin2020gradient,yang2020global,xian2021faster,doan2022convergence} that only has single-loop iterations which updates two variables with different stepsizes. The two-timescale GDA can be implemented more easily and typically performs better than Multi-Step GDA empirically~\cite{lin2020gradient}.
Its convergence rate also can be established by analyzing function $g(x)$ but the analysis is more challenging than the Multi-Step GDA.  

The variance reduction is a popular technique to improve the efficiency of stochastic optimization algorithms~\cite{schmidt2017minimizing,defazio2014saga,allen2016improved,johnson2013accelerating,shalev2013stochastic,mairal2015incremental,allen2018katyusha,fang2018spider,JMLR:v21:19-248,wang2019spiderboost,reddi2016stochastic,allen2016variance,palaniappan2016stochastic,chavdarova2019reducing,zhou2018stochastic,zhang2013linear,huang2022accelerated,nguyen2017sarah,li2021page}. 
It is shown that solving nonconvex minimization problems with stochastic recursive gradient estimator~\cite{fang2018spider,JMLR:v21:19-248,wang2019spiderboost,zhou2019faster,huang2022accelerated} has the optimal SFO complexity.  
In the context of minimax optimization, the variance reduced algorithms also obtain the best-known SFO complexities in several settings~\cite{luo2021near,han2021lower,alacaoglu2021stochastic,luo2020stochastic,yang2020global,vladislav2021accelerated}. Specifically, the (near) optimal SFO algorithm for several convex-concave minimax problem has been proposed~\cite{luo2021near,han2021lower}, but the optimality for the more general case is still unclear~\cite{luo2020stochastic,yang2020global}. 

The Catalyst acceleration~\cite{lin2015universal} is a useful approach to reduce the computational cost of ill-conditioned optimization problems, which is based on a sequence of inexact proximal point iterations. 
\citet{lin2020near} first introduced Catalyst into minimax optimization. Later, \citet{yang2020catalyst,luo2021near,vladislav2021accelerated} designed the accelerated stochastic algorithms for convex-concave and nonconvex-concave problems. Concurrently with our work, \citet{yang2022faster} also applied this technique to the one-sided PL setting.

\section{Notation and Preliminaries} \label{sec: set-up}

First of all, we present the definition of saddle point.
\begin{dfn}
We say $(x^{\ast},y^{\ast})\in\BR^{d_x}\times\BR^{d_y}$ is a saddle point of function $f:\BR^{d_x}\times\BR^{d_y}\to\BR$ if it holds that
$f(x^*,y) \leq f(x^*,y^*) \leq f(x,y^*)$
for any $x\in\BR^{d_x}$ and $y\in\BR^{d_y}$.
\end{dfn}

Then we formally define the Polyak--{\L}ojasiewicz (PL) condition~\cite{polyak1963gradient} as follows. 

\begin{dfn}\label{asm: PL}
We say a differentiable function $h:\BR^d\to\BR$ satisfies $\mu$-PL for some $\mu>0$ if
$\Vert \nabla h(z) \Vert^2 \ge 2 \mu \big(h(z) - \min_{z'\in\BR^d} h(z')\big)$
holds for any $z\in\BR^d$.
\end{dfn}

Note that the PL condition does not require strong convexity and it can be satisfied even if the function is nonconvex. For instance, the function $h(z) = z^2 + 3 \sin^2(z)$ ~\cite{karimi2016linear}.


We are interested in the finite-sum minimax optimization problem (\ref{prob:main}) under following assumptions.

\begin{asm} \label{asm: L-smooth}
We suppose each component $f_i:\BR^{d_x}\times\BR^{d_y}\to\BR$ is $L$-smooth, i.e., there exists a constant $L>0$ such that
$\Vert \nabla f_i(x,y) - \nabla f_i(x',y') \Vert^2 \le L^2\big(\Vert x - x' \Vert^2 + \Vert y - y' \Vert^2\big)$ holds for any $x,x' \in \BR^{d_x}$ and $y,y' \in \BR^{d_y}$.
\end{asm}

\begin{asm} \label{asm: two-PL}
We suppose the differentiable function $f:\BR^{d_x}\times\BR^{d_y} \rightarrow \BR$ satisfies two-sided PL condition, i.e., there exist constants $\mu_x>0$ and $\mu_y>0$ such that $f(\cdot, y)$ is $\mu_x$-PL for any $y\in\BR^{d_y}$ and $-f(x,\cdot)$ is $\mu_y$-PL for any $x\in\BR^{d_x}$.
\end{asm}


Under Assumption \ref{asm: L-smooth} and \ref{asm: two-PL}, we define the condition numbers of problem (\ref{prob:main}) with respect to PL conditions for $x$ and $y$ as $\kappa_x \triangleq {L}/{\mu_x}$ and $\kappa_y \triangleq {L}/{\mu_y}$ 
respectively. 

We also introduce the following assumption for the existence of saddle points.

\begin{asm}[\citet{yang2020global}] \label{asm: exist}
We suppose the function $f:\BR^{d_x}\times\BR^{d_y} \rightarrow \BR$ has at least one saddle point $(x^*,y^*)$. We also suppose that for any fixed $y\in\BR^{d_y}$, the problem $\min_{x \in \BR^{d_x}} f(x, y)$ has a nonempty solution set and a finite optimal value; and for any fixed $x\in\BR^{d_x}$, the problem $\max_{y \in \BR^{d_y}} f(x, y)$ has a nonempty solution set and a finite optimal value.
\end{asm}

The goal of solving minimax optimization under the two-sided PL condition is finding an $\epsilon$-optimal solution or $\epsilon$-saddle point that is defined as follows.

\begin{dfn} \label{dfn: epsilon-appox}
We say $x$ is an $\epsilon$-optimal solution of problem (\ref{prob:main}) if it holds that $g(x)-g(x^{\ast})\le \epsilon$, where $g(x)=\max_{y\in\BR^{d_y}}f(x,y)$.
\end{dfn}

We do not assume the existence of saddle points for the problems under the one-sided PL condition. 
In such case, it is guaranteed that $g(x)\triangleq \max_{y\in\BR^{d_y}}f(x,y)$ is differentiable \cite[Lemma A.5]{nouiehed2019solving} and we target to find an $\epsilon$-stationary point of $g(x)$.


\begin{dfn} \label{dfn: epsilon-stationary}
If the function $g:\BR^{d_x}\to\BR$ is differentiable, we say $x$ is an $\epsilon$-stationary point of $g$ if it holds that $\Vert\nabla g(x)\Vert\leq\epsilon$.
\end{dfn}


\section{A Faster Algorithm for the Two-Sided PL Condition} \label{sec: SPIDER-for-PL}

We first consider the two-sided PL conditioned minimax problem of the finite-sum form (\ref{prob:main}) under Assumption~\ref{asm: L-smooth}, \ref{asm: two-PL} and \ref{asm: exist}.
We propose a novel stochastic algorithm, which we refer to as SPIDER-GDA. The detailed procedure of our method is presented in Algorithm~\ref{alg: SPIDER-GDA}. SPIDER-GDA constructs the stochastic recursive gradient estimators~\cite{fang2018spider,nguyen2017sarah} as follows:
\begin{align*}
G_x(x_{t,k},y_{t,k}) =& \frac{1}{B } \sum_{i \in S_x} \big(\nabla_x f_i(x_{t,k},y_{t,k}) - \nabla_x f_i(x_{t,k-1}, y_{t,k-1})  +G_x(x_{t,k-1},y_{t,k-1})\big), \\
G_y(x_{t,k},y_{t,k}) =& \frac{1}{B} \sum_{i \in S_y} \big(\nabla_y f_i(x_{t,k},y_{t,k}) - \nabla_y f_i(x_{t,k-1}, y_{t,k-1}) +G_y(x_{t,k-1},y_{t,k-1})\big).
\end{align*}
It simultaneously updates two variables $\vx$ and $\vy$ by estimators $G_x$ and $G_y$ with different stepsizes $\tau_x=\Theta(1/(\kappa_y^2L))$ and $\tau_y=\Theta(1/L)$ respectively. \citet{luo2020stochastic,xian2021faster} have studied the SPIDER-type algorithm for nonconvex-strongly-concave problem and showed it converges to the stationary point of $g(x)\triangleq \max_{y\in\BR^{d_y}}f(x,y)$ sublinearly. However, solving the problem minimax problems under the two-sided PL condition desires a stronger linear convergence rate, which leads to our theoretical analysis being different from previous works.


We measure the convergence of SPIDER-GDA by the following Lyapunov function
\begin{align*}
\fV(x,y) \triangleq g(x) - g(x^{\ast}) + \frac{\lambda \tau_x}{\tau_y}\big(g(x) - f(x,y) \big),
\end{align*}
where $x^*\in\argmin_{x\in\BR^{d_x}}g(x)$ and $\lambda=\Theta(\kappa_y^2)$.
In Lemma \ref{lem: key-lem-for-Spider} we establish recursion for $\fV_{t,k}$ as
\begin{align*}
    \BE[\fV (x_{t,K}, y_{t,K})] \le \mathbb{E}\left[ \mathcal{V} (x_{t,0},y_{t,0})   - \frac{\tau_x}{2} \sum_{k=0}^{K-1} \Vert \nabla g(x_k) \Vert^2 - \frac{\lambda \tau_x}{4} \sum_{k=0}^{K-1} \Vert \nabla_y f(x_k,y_k) \Vert^2\right]
\end{align*}
by setting $M =B = \sqrt{n}$. Using the facts that $g(\,\cdot\,)$ is $\mu_x$-PL (Lemma \ref{lem: g(x)-PL}) and that $-f(x,\,\cdot\,)$ is $\mu_y$-PL (Assumption \ref{asm: two-PL}), we can show by setting $K = \Theta(\kappa_x \kappa_y^2)$, the value of $\tilde \fV(\tilde x_t,\tilde y_t)$ would shrink by $1/2$ each time the restart mechanism is triggered (Line 16 in Algorithm \ref{alg: SPIDER-GDA}).
Below, we formally provide the convergence result for SPIDER-GDA, and its detailed proof is shown in Appendix \ref{apx: SPIDER}.

\begin{thm} \label{thm: SPIDER-GDA} Under Assumption \ref{asm: L-smooth}, \ref{asm: two-PL} and \ref{asm: exist}, we run Algorithm \ref{alg: SPIDER-GDA} with $M= B = \sqrt{n}$ , $\tau_y = 1/(5L), \lambda = 32 L^2 / \mu_y^2$,  $\tau_x = \tau_y /  (24 \lambda) $, $ K = \lceil 2/ (\mu_x \tau_x) \rceil$ and $T = \lceil \log(1/\epsilon)\rceil$. Then the output $(\tilde x_T, \tilde y_T)$ satisfies $g(\tilde x_T) - g(x^{\ast}) \le \epsilon$ and $g(\tilde x_T) - f(\tilde x_T,\tilde y_T) \le 24 \epsilon$ in expectation; and it takes no more than $\fO\big((n + \sqrt{n} \kappa_x \kappa_y^2) \log(1/\epsilon)\big)$ SFO calls.
\end{thm}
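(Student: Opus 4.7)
The plan is to use the Lyapunov potential $\mathcal{V}(x,y) = (g(x)-g(x^*)) + \tfrac{\lambda\tau_x}{\tau_y}(g(x)-f(x,y))$ introduced in the excerpt, apply Lemma~\ref{lem: key-lem-for-Spider} to obtain a cumulative descent of $\mathcal{V}$ across the $K$ inner SPIDER-GDA iterations of each epoch, invoke the two PL conditions to convert the gradient-norm terms into Lyapunov-gap terms, and thereby establish a per-epoch contraction of the form $\mathbb{E}[\mathcal{V}(x_{t+1,0},y_{t+1,0})]\le \tfrac{1}{2}\mathbb{E}[\mathcal{V}(x_{t,0},y_{t,0})]$. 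Iterating this across $T=\lceil\log(1/\epsilon)\rceil$ outer epochs and then reading off the two nonnegative pieces of $\mathcal{V}$ delivers both guarantees on $(\tilde x_T,\tilde y_T)$; the SFO bound is a final bookkeeping step over the SPIDER batch sizes.

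To execute the main step, I would invoke Lemma~\ref{lem: key-lem-for-Spider} with $M=B=\sqrt{n}$ to obtain
\begin{align*}
\mathbb{E}[\mathcal{V}(x_{t,K},y_{t,K})] \le \mathbb{E}[\mathcal{V}(x_{t,0},y_{t,0})] - \tfrac{\tau_x}{2}\sum_{k=0}^{K-1}\mathbb{E}\|\nabla g(x_{t,k})\|^2 - \tfrac{\lambda\tau_x}{4}\sum_{k=0}^{K-1}\mathbb{E}\|\nabla_y f(x_{t,k},y_{t,k})\|^2,
\end{align*}
then plug in the PL lower bounds $\|\nabla g(x)\|^2\ge 2\mu_x(g(x)-g(x^*))$ from Lemma~\ref{lem: g(x)-PL} and $\|\nabla_y f(x,y)\|^2\ge 2\mu_y(g(x)-f(x,y))$ from Assumption~\ref{asm: two-PL} term-by-term. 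With $\tau_y=1/(5L)$, $\lambda=32L^2/\mu_y^2$, and $\tau_x=\tau_y/(24\lambda)$, one checks that $\tau_x\mu_x\le \tau_y\mu_y/2$, so the rate $c:=\tau_x\mu_x$ governs both coefficients of $\mathcal{V}$. Choosing $K=\lceil 2/(\mu_x\tau_x)\rceil$ gives $cK\ge 2$, and combining this with the monotonicity of $\mathbb{E}[\mathcal{V}_{t,k}]$ in $k$ (which follows by applying the same chain of inequalities step-wise) produces the claimed per-epoch $\tfrac{1}{2}$-contraction.

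Iterating across $T$ epochs yields $\mathbb{E}[\mathcal{V}(\tilde x_T,\tilde y_T)]\le 2^{-T}\mathcal{V}(\tilde x_0,\tilde y_0)\le\epsilon$, after which the bounds $g(\tilde x_T)-g(x^*)\le\epsilon$ and $g(\tilde x_T)-f(\tilde x_T,\tilde y_T)\le \tfrac{\tau_y}{\lambda\tau_x}\epsilon = 24\epsilon$ fall out by dropping either summand of $\mathcal{V}$ and using the identity $\tau_y/(\lambda\tau_x)=24$. The complexity follows by noting that each epoch costs one full anchor gradient of $n$ SFO calls plus $K$ mini-batch updates of cost $B$ with anchors refreshed every $M$ steps, i.e.\ $\mathcal{O}(n+K\sqrt{n})$ per epoch for $M=B=\sqrt{n}$; inserting $K=\Theta(\kappa_x\kappa_y^2)$ and multiplying by $T=\mathcal{O}(\log(1/\epsilon))$ gives $\mathcal{O}((n+\sqrt{n}\kappa_x\kappa_y^2)\log(1/\epsilon))$. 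The hard part will be the balancing in the second paragraph: aligning the two PL-induced rates so that both terms of $\mathcal{V}$ shrink at the common rate $\tau_x\mu_x$ is what forces the specific constants $\lambda=32L^2/\mu_y^2$ (large enough that the $y$-gradient descent from Lemma~\ref{lem: key-lem-for-Spider} fully absorbs the $y$-component of $\mathcal{V}$) and $\tau_x=\tau_y/(24\lambda)$, rather than the looser $\lambda=\Theta(L^2/\mu_y^2)$ and $\tau_x=\Theta(\tau_y/\lambda)$ that one might first try.
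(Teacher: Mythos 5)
Your proposal follows essentially the same route as the paper's proof: Lemma~\ref{lem: key-lem-for-Spider} gives the telescoped descent of $\fV$ over an epoch, the two PL inequalities convert the gradient terms into $\fA$ and $\fB$, the condition $\mu_x\tau_x \le \mu_y\tau_y/2$ aligns the two rates, and $K = \lceil 2/(\mu_x\tau_x)\rceil$ yields the per-epoch halving, with the same bookkeeping for the SFO count. The only superfluous element is your appeal to monotonicity of $\BE[\fV_{t,k}]$ in $k$ (which is not cleanly available step-wise in the SPIDER analysis, since the variance terms are only controlled after telescoping over an anchor window); it is not needed because the uniform random choice of $(\tilde x_{t+1},\tilde y_{t+1})$ already turns the summed descent into a bound on $\BE[\fV(\tilde x_{t+1},\tilde y_{t+1})]$, exactly as the paper does.
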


Our results provide an SFO upper bound of $\fO( (n + \sqrt{n} \kappa_x \kappa_y^2) \log(1/\epsilon))$ for finding an $\eps$-optimal solution that is better than the complexity $\fO((n + n^{2/3}\kappa_x \kappa_y^2) \log(1/\epsilon))$ derived from SVRG-AGDA~\cite{yang2020global}.
It is possible to use SVRG-type~\cite{johnson2013accelerating,zhang2013linear} estimators to replace the stochastic recursive estimators in Algorithm~\ref{alg: SPIDER-GDA}, which yields the algorithm SVRG-GDA.
We can prove that SVRG-GDA also has $\fO((n + n^{2/3}\kappa_x \kappa_y^2) \log(1/\epsilon))$ SFO upper bound that matches the theoretical result of SVRG-AGDA.
We provide the details in Appendix \ref{apx: SVRG-GDA}.


\begin{algorithm*}[t]  
\caption{SPIDER-GDA $(f, (x_0,y_0), T,K,M, B, \tau_x, \tau_y)$} 
\begin{algorithmic}[1] \label{alg: SPIDER-GDA}
\STATE $\tilde x_0 = x_0, \tilde y_t = y_0 $\\[0.15cm]
\STATE \textbf{for} $t = 0,1,\dots, T-1$ \textbf{do} \\[0.15cm]
\STATE \quad $ x_{t,0} =  \tilde x_t, y_{t,0} = \tilde y_t$ \\[0.15cm]
\STATE \quad \textbf{for} $k = 0, 1, \dots, K-1$ \textbf{do}\\[0.15cm]
\STATE \quad \quad \textbf{if} $\mod(k,M) =0$ \textbf{then} \\[0.15cm]
\STATE \quad \quad \quad $G_x(x_{t,k},y_{t,k}) = \nabla_x f(x_{t,k},y_{t,k})$ \\[0.15cm]
\STATE \quad \quad \quad $G_y(x_{t,k},y_{t,k}) = \nabla_y f(x_{t,k},y_{t,k})$ \\[0.15cm]
\STATE \quad \quad \textbf{else} \\[0.15cm]
\STATE \quad \quad \quad Draw mini-batches $S_x$ and $S_y$ independently with both sizes of $B$. \\[0.15cm]
\STATE \quad \quad \quad {\small$G_x(x_{t,k},y_{t,k}) =\dfrac{1}{B } \sum_{i \in S_x} [\nabla_x f_i(x_{t,k},y_{t,k})- \nabla_x f_i(x_{t,k-1} , y_{t,k-1} )  +G_x(x_{t,k-1},y_{t,k-1})]$}\\[0.15cm]
\STATE \quad \quad \quad {\small$G_y(x_{t,k},y_{t,k}) = \dfrac{1}{B} \sum_{i \in S_y} [\nabla_y f_i(x_{t,k},y_{t,k})- \nabla_y f_i(x_{t,k-1} , y_{t,k-1} ) +G_y(x_{t,k-1},y_{t,k-1})]$} \\[0.15cm]
\STATE \quad \quad \textbf{end if} \\[0.15cm]
\STATE \quad \quad $x_{t,k+1} = x_{t,k} - \tau_x G_x(x_{t,k},y_{t,k})$ \\[0.15cm]
\STATE \quad \quad $y_{t,k+1} = x_{y,k} + \tau_y G_y(x_{t,k},y_{t,k})$ \\[0.15cm]
\STATE \quad  \textbf{end for} \\[0.15cm]
\STATE \quad Choose $(\tilde x_{t+1},\tilde y_{t+1})$ from $\{  (x_{t,k}, y_{t,k} )\}_{k=0}^{K-1} $ uniformly at random. \\ [0.15cm]
\STATE \textbf{end for}  \\[0.15cm]
\STATE \textbf{return} $(\tilde x_T, \tilde y_T)$ 
\end{algorithmic}
\end{algorithm*}

\section{Further Acceleration with Catalyst} \label{sec: Catalyst-for-acc}

\begin{algorithm*}[t]  
\caption{AccSPIDER-GDA} 
\begin{algorithmic}[1] \label{alg: Catalyst-GDA}
    \STATE $u_ 0  = x_0$ \\[0.15cm]
    \STATE \textbf{for} $k = 0, 1, \dots, K-1$ \textbf{do}\\[0.15cm]
    \STATE \quad $(x_{k+1}, y_{k+1}) = \text{SPIDER-GDA}\left( f(x,y) + \dfrac{\beta}{2} \Vert x - u_k \Vert^2,(x_k,y_k),T_k,K,M,B,\tau_x,\tau_y\right)$ \\[0.15cm]
    \STATE $\quad u_{k+1} = x_{k+1} + \gamma(x_{k+1} - x_k)$ \\ [0.15cm]
    \STATE\textbf{end for} \\[0.15cm]
    \STATE \textbf{Option I}  (two-sided PL): \textbf{return} $(x_K,y_K)$  \\[0.15cm]
    \STATE \textbf{Option II} (one-sided PL): \textbf{return} $(\hat x,\hat y)$ chosen uniformly at random from $\{(x_k,y_k)\}_{k=0}^{K-1}$
\end{algorithmic}
\end{algorithm*}

Both the proposed SPIDER-GDA (Algorithm~\ref{alg: SPIDER-GDA}) and existing SVRG-AGDA~\cite{yang2020global} have complexities that more heavily depend on the condition number of $y$ than the condition number of $x$. It is natural to ask whether we can make the dependency of two condition numbers balanced like the results in the strongly-convex-strongly-concave case~\cite{lin2020near,luo2020stochastic,vladislav2021accelerated}. In this section, we show that it is possible by introducing the Catalyst acceleration.



We proposed the accelerated SPIDER-GDA (AccSPIDER-GDA) in Algorithm \ref{alg: Catalyst-GDA} for reducing the computational cost further.
Each iteration of the algorithm solves the following sub-problem by SPIDER-GDA (Algorithm \ref{alg: SPIDER-GDA}):
\begin{align}\label{prob:sub}
    \min_{x \in \BR^{d_x}} \max_{y \in \BR^{d_y}} F_k(x,y) \triangleq \min_{x \in \BR^{d_x}} \left\{g(x) + \frac{\beta}{2} \Vert x - u_k \Vert_2^2\right\}.
\end{align}
The resulting algorithm AccSPIDER-GDA has the following convergence result if the sub-problem can attain the required accuracy.

\begin{lem}\label{lem: outer-converge}
Under Assumption  \ref{asm: L-smooth}, \ref{asm: two-PL} and \ref{asm: exist}, we run Algorithm~\ref{alg: Catalyst-GDA} by $\beta = 2L$, $\gamma = 0$ and the appropriate setting for the sub-problem solver such that, for all $k \ge 1$, it finds $(x_{k+1},y_{k+1})$ satisfying
\begin{align} \label{dfn:delta-two}
\E\left[\max_{y \in \BR^{d_y}} F_k(x_{k+1}, y) - \min_{x \in \BR^{d_x}} F_k(x, y_{k+1})\right] \le 
   \frac{\mu_y^2 \epsilon}{12L^2} \triangleq \delta,
\end{align}
where $\delta>0$ denotes the required precision of the sub-problem.
Then it holds that
\begin{align*}
    \BE[g(x_k)  - g(x^{\ast})]
    &\le \left(1-\frac{\mu_x}{4\beta+ \mu_x} \right)^k \big(g(x_0) - g(x^{\ast})\big) + \frac{\epsilon}{2}.
\end{align*}
\end{lem}
The setting $\beta = \Theta(L)$ in Lemma \ref{lem: outer-converge} guarantees the sub-problem (\ref{prob:sub}) has condition number of the order $\fO(1)$ for $x$. It is more well-conditioned on $x$, we prefer to address the following equivalent problem
\begin{align} \label{sub-minimax}
    \max_{y \in \BR^{d_y}} \min_{ x \in \BR^{d_x}} F_k(x,y) = - \min_{y \in \BR^{d_y}} \max_{ x \in \BR^{d_x}} \left\{ -F_k(x,y)\right\}.
\end{align}

The above strong duality under PL conditions are shown in appendix.
Since (\ref{sub-minimax}) is a minimax problem satisfying the two-sided PL condition, we can apply SPIDER-GDA to solve it. 


\begin{lem} \label{sub:eps-saddle}
Under Assumption  \ref{asm: L-smooth}, \ref{asm: two-PL} and \ref{asm: exist}, if we use Algorithm \ref{alg: SPIDER-GDA} to solve each sub-problem $\max_{y \in \BR^{d_y}} \min_{ x \in \BR^{d_x}} F_k(x,y) $ (\ref{prob:sub}) with $\beta = 2L$,
$M = B = \sqrt{n}$, $\tau_x = 1/(15L)$, $\lambda = 288$, $\tau_y = \tau_x / (24 \lambda)$, $K = \lceil 2 / (\mu_y \tau_y) \rceil$, $T_k = \lceil \log(1/\delta_k) \rceil$, then for all $k \ge 1$ it holds that
\begin{align*}
\BE\left[\max_{y \in \BR^{d_y}} F_k(x_{k+1}, y) - \min_{x \in \BR^{d_x}} F_k(x, y_{k+1})\right]
\le 200 \kappa_y^2 \delta_k \BE \left[\max_{y \in \BR^{d_y}} F_k(x_{k}, y) - \min_{x \in \BR^{d_x}} F_k(x, y_{k})\right].  
\end{align*}
\end{lem}


For a short summary, Lemma \ref{lem: outer-converge} means Algorithm \ref{alg: Catalyst-GDA} requires $\fO(\kappa_x \log(1/\epsilon))$ numbers of inexact proximal point iterations to find an $\epsilon$-optimal solution of the problem. And Lemma \ref{lem: sub-probelm} tells us that each sub-problem can be solved within an SFO complexity of $\fO\left(n + \sqrt{n} \kappa_y) \log(1/\delta_k)\right)$. Thus, the total complexity for AccSPIDER-GDA becomes $\fO( (n \kappa_x + \sqrt{n} \kappa_x \kappa_y ) \log(1/\epsilon) \log(1/\delta_k))$. Our next step is to specify $\delta_k$, which would lead to the total SFO complexity of the algorithm.

\begin{thm} \label{thm: Catalyst-GDA}
Under Assumption  \ref{asm: L-smooth}, \ref{asm: two-PL}, and \ref{asm: exist}, if we  let $\gamma = 0, \beta = 2L$ and use Algorithm \ref{alg: SPIDER-GDA} to solve each sub-problem $\max_{y \in \BR^{d_y}} \min_{ x \in \BR^{d_x}} F_k(x,y)$ (\ref{prob:sub}) with
$M, B, \tau_x, \tau_y,K $ defined as Lemma~\ref{sub:eps-saddle} and $T_k = \lceil \log(1/\delta_k) \rceil$, where 
\begin{align} \label{dfn:delta_k}
    \delta_k = 
    \begin{cases}
    \dfrac{1}{400 \kappa_y^2 } \min \left\{ \dfrac{1}{24}, \dfrac{\mu_y^2 \delta}{64 L^2 \Vert x_k - x_{k-1} \Vert^2}  \right \}, & k \ge 1;\\[0.25cm]
    \dfrac{\delta}{2400 \kappa_y^4 ( g(x_0) - g(x^*) + g(x_0) - f(x_0,y_0))}, & k =0,
    \end{cases}
\end{align}
and $\delta$ is followed by the definition in (\ref{dfn:delta-two}). Then Algorithm \ref{alg: Catalyst-GDA} can return $x_K$ such that $g(x_K) - g(x^{\ast}) \le \epsilon$ in expectation with no more than $\fO( (n \kappa_x + \sqrt{n} \kappa_x \kappa_y) \log(1/\epsilon) \log(\kappa_y /\epsilon))$ SFO calls.
\end{thm}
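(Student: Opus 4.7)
The plan is to combine the outer Catalyst convergence rate of Lemma~\ref{lem: outer-converge} with the inner solver guarantee of Lemma~\ref{sub:eps-saddle}: the recursive choice of $\delta_k$ in (\ref{dfn:delta_k}) is engineered precisely so that the inner approximation error stays below the tolerance $\delta$ from (\ref{dfn:delta-two}) at every outer step, after which Lemma~\ref{lem: outer-converge} delivers an $\epsilon$-optimal solution within $K = \fO(\kappa_x \log(1/\epsilon))$ outer iterations.

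My first task would be to establish by induction on $k$ the invariant $\BE[\|x_k - \tilde x_k\|^2 + \|y_k - \tilde y_k\|^2] \le \delta$, where $(\tilde x_k,\tilde y_k)$ is the unique saddle point of $F_{k-1}$ (unique by Assumption~\ref{asm:uni} and Lemma~\ref{lem: dual-Fk}). The base case is immediate from the $k=0$ branch $\delta_0 = \delta\mu_y/(14472\,\kappa_y^2(g(x_0) - g(x^{\ast})))$ together with one application of Lemma~\ref{sub:eps-saddle}. For the inductive step, the factor $7236\kappa_y^2\delta_k$ in Lemma~\ref{sub:eps-saddle} is, thanks to the $k \ge 1$ branch of $\delta_k$, simultaneously $\le 1/4$ and $\le (\beta - L)\mu_y\delta/(16\beta^2 \|x_k - x_{k-1}\|^2)$; the second bound is what is needed to absorb the saddle drift $\|\tilde x_{k+1} - \tilde x_k\|$ between consecutive sub-problems, which scales with the prox-step displacement $\|x_k - x_{k-1}\|$ via the $\beta$-strongly convex structure of $F_k$ in $x$.

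Next I would bound the per-call SFO cost. With $\beta = 2L$, the sub-problem $F_k$ is $L$-strongly convex and $3L$-smooth in $x$ and $\mu_y$-PL with $3L$-smoothness in $y$. Applying Theorem~\ref{thm: SPIDER-GDA} to the swapped formulation (\ref{sub-minimax}) --- in which the outer condition number becomes $\fO(\kappa_y)$ and the inner one $\fO(1)$ --- yields $\fO((n + \sqrt{n}\,\kappa_y)\log(1/\delta_k))$ SFO calls per inner sub-problem. It remains to show $\log(1/\delta_k) = \fO(\log(\kappa_x\kappa_y/\epsilon))$ uniformly in $k$. For this I would write $\|x_k - x_{k-1}\| \le \|x_k - \tilde x_k\| + \|\tilde x_k - x_{k-1}\|$, bound the first summand by $\fO(\sqrt{\delta})$ using the invariant, and bound the second by combining the first-order optimality of $\tilde x_k$ for the $\beta$-strongly convex objective $F_{k-1}(\cdot,\tilde y_k)$ with the $L$-smoothness of $g$. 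Plugging in $\delta = \Theta(\mu_x\epsilon/L^2)$ and simplifying absorbs all polynomial factors inside a single logarithm. Multiplying the per-call cost by $K = \fO(\kappa_x\log(1/\epsilon))$ then yields the stated total $\fO((n\kappa_x + \sqrt{n}\,\kappa_x\kappa_y)\log(1/\epsilon)\log(\kappa_x\kappa_y/\epsilon))$.

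The hard part is securing the invariant: the warm start $x_k$ approximates $\tilde x_k$, the saddle of the \emph{previous} sub-problem $F_{k-1}$, whereas the inner solver now targets $\tilde x_{k+1}$, the saddle of $F_k$. The mismatch $\|\tilde x_{k+1} - \tilde x_k\|$ must be controlled by the prox-step displacement $\|x_k - x_{k-1}\|$, which is exactly why that quantity appears in the denominator of the $k \ge 1$ branch of $\delta_k$. Handling this coupling cleanly, without inflating the final bound by extra factors of $\kappa_x$ or $\kappa_y$, will be the main technical challenge.
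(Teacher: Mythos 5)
Your proposal is correct and follows the same overall architecture as the paper's proof: establish the invariant $\BE[\Vert x_k-\tilde x_k\Vert^2+\Vert y_k-\tilde y_k\Vert^2]\le\delta$ by induction (this is exactly the paper's Lemma \ref{lem: two-side-error}, whose inductive step uses Lemma \ref{sub:eps-saddle} together with the saddle-drift bound of Lemma \ref{lem: bound-tilde-xy} and whose base case uses Lemma \ref{lem:induction-base}), invoke Lemma \ref{lem: outer-converge} for the $K=\fO(\kappa_x\log(1/\epsilon))$ outer iteration count, and charge each sub-problem $\fO((n+\sqrt{n}\,\kappa_y)\log(1/\delta_k))$ SFO calls.

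The one place you genuinely diverge is the aggregation of the $\log(1/\delta_k)$ factors. You propose a \emph{uniform} bound $\log(1/\delta_k)=\fO(\log(\kappa_x\kappa_y/\epsilon))$ for every $k$, obtained by bounding each displacement $\Vert x_k-x_{k-1}\Vert$ individually through the prox optimality condition $\nabla g(\tilde x_k)+\beta(\tilde x_k-x_{k-1})=0$ and the smoothness of $g$ (note $g$ is $(2L^2/\mu_y)$-smooth, not $L$-smooth, but this only changes constants inside the logarithm). The paper instead bounds the \emph{sum} $\sum_k\log(1/\delta_k)$: it telescopes Lemma \ref{lem:g(x)-decrease} to get $\sum_{k}\BE[\Vert x_{k+1}-x_k\Vert^2]\le\frac{2}{\beta}(g(x_0)-g^{\ast})+\fO(\delta)$ and then applies the log-sum (AM--GM) inequality $\sum_i\log t_i\le K\log\bigl(\frac{1}{K}\sum_i t_i\bigr)$ to push the sum of displacements inside a single logarithm. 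Both routes land on the same stated complexity once $g(x_0)-g^{\ast}$ is absorbed as a problem constant (the paper does this too), so your variant is acceptable; the paper's telescoping argument is marginally more robust in that it never needs a per-iteration bound on $g(x_{k-1})-g^{\ast}$, only the aggregate decrease, whereas your route must track the $\fO(KL\delta)=\fO(\epsilon\log(1/\epsilon))$ error accumulation in Lemma \ref{lem:g(x)-decrease} to certify that each $g(x_{k-1})-g^{\ast}$ stays bounded. Neither issue affects the final bound.
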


Lemma \ref{lem: outer-converge} does not rely on the choice of sub-problem solver, we can apply the acceleration framework in Algorithm~\ref{alg: Catalyst-GDA} by replacing SPIDER-GDA with other algorithms. We summarize the SFO complexities for the acceleration of different algorithms in Table \ref{tbl: diff-acc}. 
\begin{table}[t]
\centering
\caption{Accelerated results for different methods under the two-sided PL condition. 
We use the notation $\tilde \fO(\cdot)$ to hide the logarithmic factors of $\kappa_y$ and $1/\epsilon$.}
{
\begin{tabular}{ccc}
\hline 
Method & Before Acceleration & After Acceleration   \\
\hline \addlinespace
GDA & $ \tilde \fO(n \kappa_x \kappa_y^2 )$  & $\tilde \fO\left(n \kappa_x \kappa_y\right)$\\[0.3cm]
SVRG-GDA & $\tilde \fO((n + n^{2/3}\kappa_x \kappa_y^2))$  & 
$
\begin{cases}
\tilde \fO\left(n^{2/3} \kappa_x \kappa_y\right), & n^{1/3} \lesssim \kappa_y; \\[0.15cm]
\tilde \fO \left(n \kappa_x \right), & \kappa_y \lesssim n^{1/3} \lesssim \kappa_x \kappa_y; \\[0.15cm]
\text{no acceleration}, & \kappa_x \kappa_y \lesssim n^{1/3}.
\end{cases}
$
\\\addlinespace
SPIDER-GDA & $\tilde \fO\left((n + \sqrt{n} \kappa_x \kappa_y^2) \right)$ & $
\begin{cases}
\tilde \fO\left(\sqrt{n} \kappa_x \kappa_y \right), &  \sqrt{n} \lesssim \kappa_y; \\[0.15cm]
\tilde \fO \left( n \kappa_x \right), & \kappa_y \lesssim \sqrt{n} \lesssim \kappa_x \kappa_y; \\[0.15cm]
\text{no acceleration}, & \kappa_x \kappa_y \lesssim \sqrt{n}. 
\end{cases}
$  \\ \addlinespace
 \hline
\end{tabular}}
\label{tbl: diff-acc}
\end{table}

\section{Extension to One-Sided PL Condition} \label{sec: extension}
In this section, we show the idea that SPIDER-GDA and its  Catalyst acceleration also work for one-sided PL conditions. 
We relax Assumption \ref{asm: two-PL} and \ref{asm: exist} to the following one.

\begin{asm} \label{asm: one-PL}
We suppose that $-f(x,\cdot)$ is $\mu_y$-PL for any $x\in\BR^{d_x}$;
the problem $\max_{y\in\BR^{d_y}}f(x,y)$ has a nonempty solution set and an optimal value ; $g(x) \triangleq \max_{y\in\BR^{d_y}}f(x,y)$ is lower bounded, i.e., we have $g^{\ast} = \inf_{x \in \BR^{d_x}} g(x) > -\infty$.
\end{asm}

We first show that the SFO complexity of SPIDER-GDA outperforms SVRG-GDA \footnote{The complexity for finding an $\epsilon$-stationary point of SVRG-GDA is presented in Appendix \ref{apx: one-side}.}  by a factor of $\mathcal{O}(n^{1/6})$ in Theorem \ref{thm: SPIDER-GDA-one}. 
 
\begin{thm} \label{thm: SPIDER-GDA-one}
Under Assumption \ref{asm: L-smooth} and \ref{asm: one-PL} , Let $T = 1$ and $M,B,\tau_x,\tau_y,\lambda$ as defined in Theorem~\ref{thm: SPIDER-GDA} and $ K = \lceil 64 / (\tau_x \epsilon^2 )\rceil$, then  Algorithm \ref{alg: SPIDER-GDA} can guarantee the output $\hat x$ to satisfy  $\Vert \nabla g(\hat x) \Vert \le \epsilon$ in expectation with no more than $\fO(n+ { \sqrt{n} \kappa_y^2 L}{\epsilon^{-2}})$ SFO calls.
\end{thm}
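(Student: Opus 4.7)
My plan is to reuse the per-epoch descent inequality of Lemma~\ref{lem: key-lem-for-Spider}, which was the workhorse of the two-sided analysis, and to observe that its proof does not in fact invoke the PL property of $f(\cdot,y)$; only smoothness of each $f_i$, the PL property of $-f(x,\cdot)$, and the SPIDER variance bound enter. Concretely, I will redefine the Lyapunov potential as
\begin{align*}
\fV(x,y) \triangleq \big(g(x) - g^{\ast}\big) + \frac{\lambda\tau_x}{\tau_y}\big(g(x) - f(x,y)\big),
\end{align*}
replacing $g(x^{\ast})$ by $g^{\ast}=\inf_{x}g(x)$. Both summands remain non-negative: the first by definition of $g^{\ast}$, and the second because $g(x)=\max_{y'}f(x,y')\ge f(x,y)$. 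With this modification, Lemma~\ref{lem: key-lem-for-Spider} still yields
\begin{align*}
\BE[\fV(x_{t,K},y_{t,K})] \le \BE\bigg[\fV(x_{t,0},y_{t,0}) - \frac{\tau_x}{2}\sum_{k=0}^{K-1}\Vert \nabla g(x_{t,k})\Vert^2 - \frac{\lambda\tau_x}{4}\sum_{k=0}^{K-1}\Vert \nabla_y f(x_{t,k},y_{t,k})\Vert^2\bigg].
\end{align*}

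Since $T=1$, there is a single outer pass, so I specialize to $t=0$, drop the non-negative $y$-gradient sum, and use $\fV(x_{0,K},y_{0,K})\ge 0$ to obtain
\begin{align*}
\frac{\tau_x}{2}\sum_{k=0}^{K-1}\BE[\Vert \nabla g(x_{0,k})\Vert^2] \le \fV(x_0,y_0).
\end{align*}
Because Algorithm~\ref{alg: SPIDER-GDA} returns $\hat x$ drawn uniformly at random from $\{x_{0,k}\}_{k=0}^{K-1}$, dividing by $K$ gives $\BE[\Vert \nabla g(\hat x)\Vert^2]\le 2\fV(x_0,y_0)/(K\tau_x)$. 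Choosing $K=\lceil 64/(\tau_x\epsilon^2)\rceil$ together with the assumed constant-bound on $\fV(x_0,y_0)$ makes the right-hand side at most $\epsilon^2$, and Jensen's inequality then promotes it to $\BE[\Vert \nabla g(\hat x)\Vert]\le\epsilon$.

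For the oracle count, each of the $K$ inner steps costs either $n$ calls (when $k\equiv 0\pmod M$) or $2B$ calls (otherwise). With $M=B=\sqrt{n}$, the total is $\lceil K/M\rceil\,n + 2(K-\lceil K/M\rceil)B = \fO(K\sqrt{n})$ on top of the $\fO(n)$ initialization. Substituting $\tau_x=\Theta(1/(\kappa_y^2 L))$ gives $K=\fO(\kappa_y^2 L\epsilon^{-2})$, which produces the claimed $\fO(n+\sqrt{n}\,\kappa_y^2 L\epsilon^{-2})$ bound.

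The main obstacle will be verifying that Lemma~\ref{lem: key-lem-for-Spider}, originally stated under Assumption~\ref{asm: two-PL}, really does survive the relaxation to Assumption~\ref{asm: one-PL}. In the two-sided proof the $\mu_x$-PL property of $g$ enters only at the restart step (Line~16), where it converts per-epoch descent into geometric decay; here I simply run for a single epoch and invoke the standard ``uniform-iterate turns descent into a stationarity guarantee'' reduction from nonconvex SGD, so the restart never fires and PL-on-$x$ is not needed. The remaining steps are the routine telescoping and SFO bookkeeping already carried out for Theorem~\ref{thm: SPIDER-GDA}.
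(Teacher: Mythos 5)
Your proposal is correct and follows essentially the same route as the paper: the paper's own proof of Theorem~\ref{thm: SPIDER-GDA-one} simply invokes Lemma~\ref{lem: key-lem-for-Spider} (which is already stated under Assumptions~\ref{asm: L-smooth} and~\ref{asm: one-PL}, confirming your observation that PL in $x$ is never used there), drops the nonnegative terms, uses the uniformly random output index, and performs the same $\fO(n+K\sqrt{n})$ oracle count with $\tau_x=\Theta(1/(\kappa_y^2L))$. Your explicit replacement of $g(x^{\ast})$ by $g^{\ast}=\inf_x g(x)$ in the Lyapunov function is exactly the (implicit) adjustment the paper relies on, so there is nothing substantively different here.
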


The AccSPIDER-GDA also performs better than SPIDER-GDA under one-sided PL conditions for ill-conditioned problems. In the following lemma, we show that AccSPIDER-GDA could find an approximate stationary point if we solve the sub-problem sufficiently accurately.

\begin{lem} \label{lem: outer-convergence-one-side}
Under Assumption \ref{asm: L-smooth} and \ref{asm: one-PL}, if for all $k \ge 1$ the point $(x_{k+1},y_{k+1})$ satisfies
\begin{align} \label{dfn:delta-one}
\E\left[\max_{y \in \BR^{d_y}} F_k(x_{k+1}, y) - \min_{x \in \BR^{d_x}} F_k(x, y_{k+1})\right] \le \frac{\epsilon^2}{184 \kappa_y^2 L} \triangleq \delta.  
\end{align}
 Let $\beta=2L$, then for the output $(\hat x,\hat y)$ of Algorithm \ref{alg: Catalyst-GDA}, it holds true that
\begin{align*}
    \BE \Vert \nabla g(\hat x) \Vert^2
    \le  \frac{8 \beta(g(x_0) - g^{\ast})}{K} +\frac{\epsilon^2}{2}.
\end{align*}
\end{lem}

Compared with SPIDER-GDA, the analysis of AccSPIDER-GDA is more complicated since the precision $\delta_k$ at each round is different. By choosing the parameters of the algorithm carefully, we obtain the following result.

\begin{thm} \label{thm: Catalyst-GDA-one-side}
Under Assumption \ref{asm: L-smooth} and \ref{asm: one-PL}, if we run Algorithm~\ref{alg: Catalyst-GDA} by $\gamma = 0, \beta = 2L$  and use Algorithm \ref{alg: SPIDER-GDA} to solve each sub-problem $\max_{y \in \BR^{d_y}} \min_{ x \in \BR^{d_x}} F_k(x,y)$ (\ref{prob:sub}) with $M,B,\tau_x,\tau_y,\lambda,K$ and $T_k$ (dependent on $\delta$) as in Theorem \ref{thm: Catalyst-GDA}
and $\delta$ is followed by the definition in Lemma \ref{lem: outer-convergence-one-side}, then Algorithm \ref{alg: Catalyst-GDA} can find $\hat x$ such that $\Vert \nabla g(\hat x) \Vert \le \epsilon$ in expectation within $\fO( (n  + \sqrt{n} \kappa_y) L \epsilon^{-2} \log(\kappa_y/\epsilon) )$ SFO calls.
\end{thm}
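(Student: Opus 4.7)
The plan is to stitch together the outer descent guarantee of Lemma \ref{lem: outer-convergence-one-side} with the per-sub-problem contraction from Lemma \ref{sub:eps-saddle}, mirroring the structure of Theorem \ref{thm: Catalyst-GDA} but driving the gradient norm rather than the function value. First I would choose the outer horizon $K$ so that Lemma \ref{lem: outer-convergence-one-side} yields $\BE\|\nabla g(\hat x)\|^2 \le \epsilon^2$: taking $K = \lceil 16 \beta (g(x_0) - g^{\ast})/\epsilon^2 \rceil = \fO(L/\epsilon^2)$ forces $8\beta(g(x_0) - g^{\ast})/K \le \epsilon^2/2$, which combined with the residual $\epsilon^2/2$ delivers the target accuracy, provided every iterate satisfies $\BE[\|x_k - \tilde x_k\|^2 + \|y_k - \tilde y_k\|^2] \le \delta$ with $\delta$ given by (\ref{dfn:delta-one}). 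Assumption \ref{asm:uni} together with Lemma \ref{lem: dual-Fk} ensures that the saddle point $(\tilde x_k, \tilde y_k)$ of $F_{k-1}$ is unique, so this inner-accuracy requirement is well-posed.

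Next I would invoke Lemma \ref{sub:eps-saddle}: with $\beta = 2L$, each sub-problem (\ref{prob:sub}) is a two-sided PL minimax with an $\fO(1)$-conditioned variable $x$ and a $\kappa_y$-conditioned variable $y$, and running SPIDER-GDA on the dual reformulation (\ref{sub-minimax}) with $T_k = \lceil \log(1/\delta_k) \rceil$ restarts contracts the squared distance by a factor $7236 \kappa_y^2 \delta_k$, at a total cost of $\fO\big((n + \sqrt{n}\,\kappa_y)\log(1/\delta_k)\big)$ SFO calls per sub-problem (the inner horizon is $\fO(\kappa_y)$, the mini-batch size is $\sqrt n$, and a full gradient is drawn every $M = \sqrt n$ iterations, giving $\fO(n + \sqrt{n}\,\kappa_y)$ SFO calls per restart). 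With $\delta_k$ chosen as in (\ref{dfn:delta_k}), the telescoping argument already developed for Theorem \ref{thm: Catalyst-GDA} transfers almost verbatim and certifies $\BE[\|x_k - \tilde x_k\|^2 + \|y_k - \tilde y_k\|^2] \le \delta$ at every outer iterate. Summing over $K = \fO(L/\epsilon^2)$ outer iterations gives the total SFO cost $\fO\big((n+\sqrt{n}\,\kappa_y) L \epsilon^{-2} \max_k \log(1/\delta_k)\big)$.

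The main obstacle is establishing a uniform bound $\max_k \log(1/\delta_k) = \fO(\log(\kappa_y/\epsilon))$. Reading off (\ref{dfn:delta_k}) with $\beta = 2L$, we have $\delta_k^{-1} \lesssim \kappa_y^2 L^2 \|x_k - x_{k-1}\|^2 / (\mu_y \delta)$ for $k \ge 1$, so an a priori polynomial upper bound on the proximal displacement $\|x_k - x_{k-1}\|^2$ is required. I expect such a bound to be extractable as a byproduct of the proof of Lemma \ref{lem: outer-convergence-one-side}: approximate optimality of $x_k$ for the proximal sub-problem centered at $x_{k-1}$ yields a descent-lemma style inequality of the form $\beta \|x_k - x_{k-1}\|^2 \lesssim g(x_{k-1}) - g(x_k) + \fO(\delta)$, and telescoping gives $\|x_k - x_{k-1}\|^2 \le \fO((g(x_0) - g^{\ast})/\beta + \delta)$, which is polynomial in the problem constants and $1/\epsilon$. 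Substituting back into $\delta_k^{-1}$ and taking logarithms absorbs the problem-independent factors into $\log(\kappa_y/\epsilon)$, and plugging this into the cost expression above yields the advertised $\fO\big((n + \sqrt{n}\,\kappa_y) L \epsilon^{-2}\log(\kappa_y/\epsilon)\big)$ SFO complexity, completing the argument.
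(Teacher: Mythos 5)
Your proposal is correct and follows essentially the same route as the paper: the outer horizon $K=\lceil 16\beta(g(x_0)-g^{\ast})/\epsilon^2\rceil$ combined with Lemma \ref{lem: outer-convergence-one-side}, the per-sub-problem cost $\fO\big((n+\sqrt{n}\,\kappa_y)\log(1/\delta_k)\big)$ from Lemma \ref{sub:eps-saddle}, and the proximal-descent telescoping (Lemma \ref{lem:g(x)-decrease} and (\ref{eq: one-side-tele})) to control the displacements $\Vert x_k-x_{k-1}\Vert^2$ entering $\delta_k$. The only cosmetic difference is that you bound $\max_k\log(1/\delta_k)$ via a uniform displacement bound, whereas the paper bounds $\sum_k\log(1/\delta_k)$ through the concavity of the logarithm applied to the telescoped sum as in (\ref{SFO:xk12}); both yield the same $\log(\kappa_y/\epsilon)$ factor.
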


We can directly set $\beta=0$ for Algorithm~\ref{alg: Catalyst-GDA} in the case of very large $n$, which makes AccSPIDER-GDA reduce to SPIDER-GDA. The summary and comparison of the complexities under the one-sided PL condition are both shown in Table \ref{tbl: one-side-PL}.
Besides, the algorithms of GDA and SVRG-GDA also can be accelerated with the Catalyst framework and we present the corresponding results in Table \ref{tbl: diff-acc-one}.

\begin{table}[t]
\centering
\caption{Acceleration for different methods under one-sided PL condition.
}
{\begin{tabular}{ccc}
\hline
Method & Before Acceleration & After Acceleration   \\
\hline \addlinespace
GDA & $\fO\left(n \kappa_y^2L\epsilon^{-2}\right)$  & $\fO\left(n \kappa_y L\epsilon^{-2} \log(\kappa_y/\epsilon)\right)$\\[0.2cm]
SVRG-GDA & $ \fO\left(n + n^{2/3} \kappa_y^2 L\epsilon^{-2} \right)$  & 
$
\begin{cases}
 \fO\left(n^{2/3} \kappa_y L\epsilon^{-2}  \log(\kappa_y/\epsilon) \right), &  n^{1/3} \lesssim \kappa_y; \\[0.15cm]
 \fO \left( n L\epsilon^{-2}  \log(\kappa_y/\epsilon) \right), & \kappa_y \lesssim n^{1/3} \lesssim \kappa_y^2; \\[0.15cm]
\text{no acceleration}, & \kappa_y^2 \lesssim n^{1/3}. 
\end{cases}
$
\\ \addlinespace
SPIDER-GDA & $\fO\left(n + {\sqrt{n} \kappa_y^2 L}{\epsilon^{-2}}\right)$ & $
\begin{cases}
\fO\left({\sqrt{n} \kappa_y L}{\epsilon^{-2}}  \log(\kappa_y/\epsilon) \right), & \sqrt{n} \lesssim \kappa_y; \\[0.15cm]
\fO \left( {n L}{\epsilon^{-2}}  \log(\kappa_y/\epsilon)\right), & \kappa_y \lesssim \sqrt{n} \lesssim \kappa_y^2; \\[0.15cm]
\text{no acceleration}, & \kappa_y^2  \lesssim \sqrt{n}. 
\end{cases}
$  \\ \addlinespace
 \hline
\end{tabular}}
\label{tbl: diff-acc-one}
\end{table}

\section{Experiments} \label{sec: exp}

In this section, we conduct the numerical experiments to show the advantage of proposed algorithms and the source code is available\footnote{~\url{https://github.com/TrueNobility303/SPIDER-GDA}}. We consider the following two-player Polyak--{\L}ojasiewicz game:
\begin{align*}
\min_{x\in\BR^{d}}\max_{y\in\BR^{d}} f(x,y) \triangleq \frac{1}{2} x^\top P x  - \frac{1}{2}y^\top Q  y + x^\top R  y,
\end{align*}
where
\begin{align*}
P = \frac{1}{n}\sum_{i=1}^n p_i p_i^\top, \quad
Q = \frac{1}{n}\sum_{i=1}^n q_i q_i^\top \quad \text{and} \quad
R = \frac{1}{n}\sum_{i=1}^n r_i r_i^\top.
\end{align*}
We independently sample $p_i$, $q_i$ and $r_i$ from $\fN(0, \Sigma_P) $, $\fN(0, \Sigma_Q)$ and $\fN(0, \Sigma_R)$ respectively. We set the covariance matrix $\Sigma_P$ as the form of $U D U^\top$ such that $U\in\BR^{d\times r}$ is column orthogonal matrix and $D\in\BR^{r\times r}$ is diagonal with $r<d$. The diagonal elements of $D$ are distributed uniformly in the interval $[\mu, L]$ with $0<\mu<L$. The matrix $\Sigma_Q$ is set in a similar way to $\Sigma_P$. We also let $\Sigma_R = 0.1 V V^\top$, where each element of $V\in\BR^{d\times d}$ is sampled from $\fN(0, 1)$ independently. Since the covariance matrices $\Sigma_P$ and $\Sigma_Q$ are rank-deficient, it is guaranteed that both $P$ and $Q$ are singular. Hence, the objective function is not strongly-convex nor strongly-concave, but it satisfies the two-sided PL-condition~\cite{karimi2016linear}. We set $n = 6000, d= 10$, $r=5$, $L=1$ for all experiments; and let $\mu$ be $10^{-5}$ and $10^{-9}$ for two different settings.

We compare the proposed SPIDER-GDA (Algorithm \ref{alg: SPIDER-GDA}) and AccSPIDER-GDA (Algorithm \ref{alg: Catalyst-GDA}) with the baseline algorithm SVRG-AGDA~\cite{yang2020global}. We let $B=1$ and $M=n$ for all of these algorithms and both of the stepsizes for $x$ and $y$ are tuned from $\{10^{-1}, 10^{-2}, 10^{-3}, 10^{-4}, 10^{-5}\}$. For AccSPIDER, we set $\beta = L/(20n)$ and $\gamma = 0.999$. 
We present the results of the number of SFO calls against the norm of gradient and the distance to the saddle point in Figure \ref{fig: dataset-1} and Figure \ref{fig: dataset-2}. It is clear that our algorithms outperform baselines.

\begin{figure}[htbp]\centering
\begin{tabular}{cc}
\includegraphics[scale=0.44]{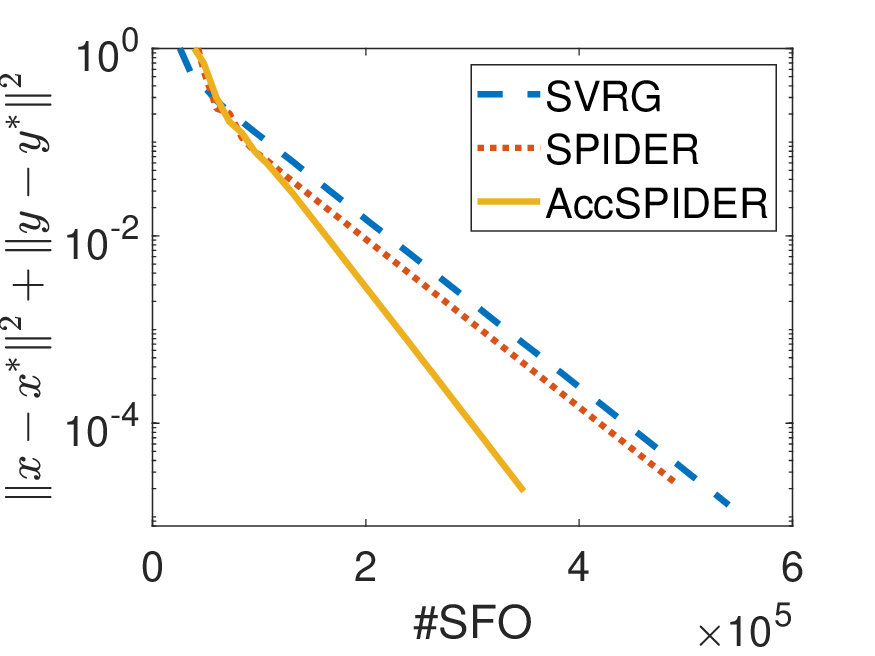} &
\includegraphics[scale=0.44]{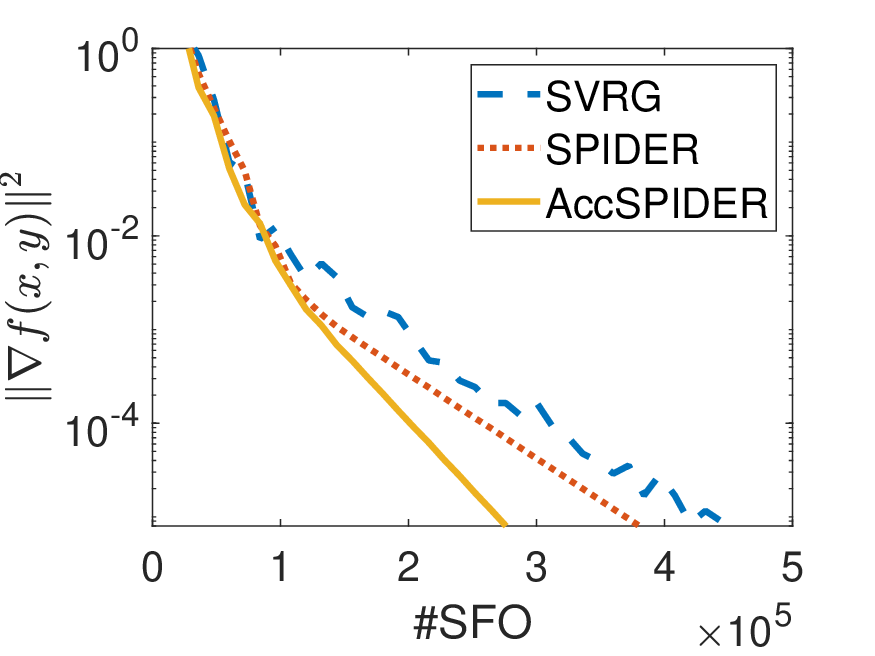} \\
(a) Distance to saddle point & 
(b) Norm of gradient  
\end{tabular}
\caption{The comparison for the case of $\mu = 10^{-5}$}
    \label{fig: dataset-1}
\end{figure}

\begin{figure}[htbp]\centering
\begin{tabular}{cc}
\includegraphics[scale=0.44]{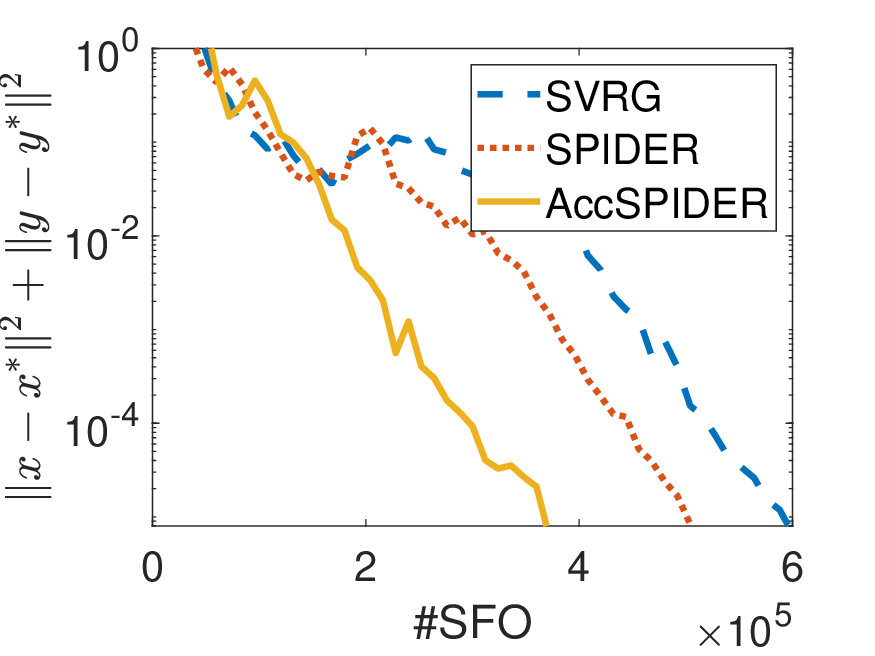} &
\includegraphics[scale=0.44]{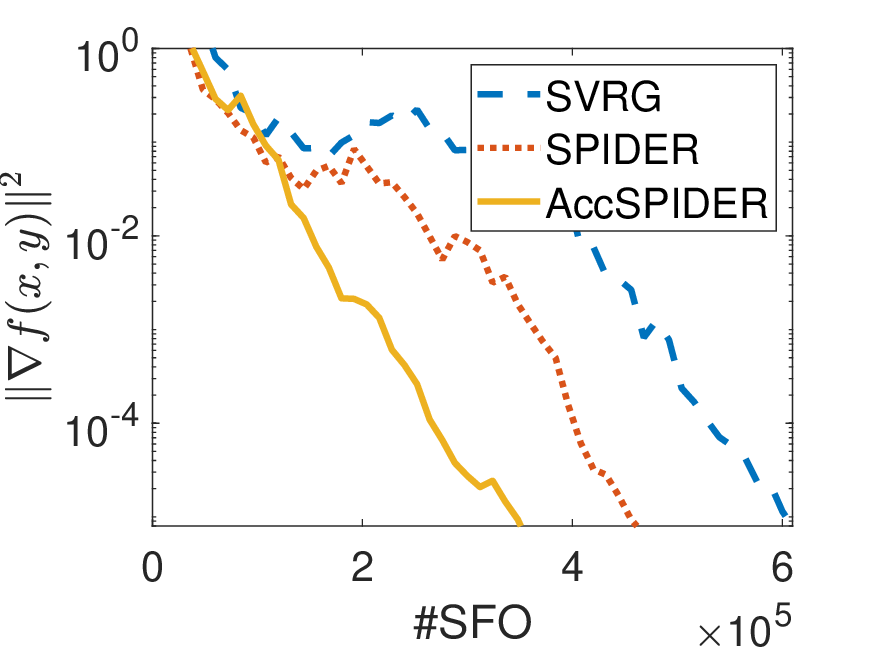} \\
(a) Distance to saddle point & 
(b) Norm of gradient  
\end{tabular}
\caption{The comparison for the case of $\mu = 10^{-9}$}
\label{fig: dataset-2}
\end{figure}

\section{Conclusion and Future Work} \label{sec: discucssion}

In this paper, we have investigated stochastic optimization for PL conditioned minimax problem with the finite-sum objective. 
We have proposed the SPIDER-GDA algorithm, which reduces the dependency of the sample numbers in SFO complexity. Moreover, we have introduced a Catalyst scheme to accelerate our algorithm for solving ill-conditioned problems. 
We improve the SFO upper bound of the state-of-the-art algorithms for both two-sided and one-sided PL conditions.

However, the optimality of SFO algorithms for the PL conditioned minimax problem is still unclear. It is interesting to construct the lower bound for verifying the tightness of our results. It is also possible to extend our algorithm to the online setting.

\section*{Acknowledgements}
The authors would like to thank Yunyan Bai for pointing out some mistakes in the proof of Theorem~\ref{thm: SPIDER-GDA}.
This work is supported by the National Natural Science Foundation of China (No. 62206058) and the Shanghai Sailing Program (22YF1402900).

\bibliographystyle{plainnat}
\bibliography{reference}

\begin{enumerate}

\item For all authors...
\begin{enumerate}
  \item Do the main claims made in the abstract and introduction accurately reflect the paper's contributions and scope?
    \answerYes{}
  \item Did you describe the limitations of your work?
    \answerYes{See Section \ref{sec: discucssion}.}
  \item Did you discuss any potential negative societal impacts of your work?
    \answerNo{The purpose of this work is for to provide a better understanding of GDA on a class of nonconvex-nonconcave minimax optimization.} 
  \item Have you read the ethics review guidelines and ensured that your paper conforms to them?
    \answerYes{}
\end{enumerate}

\item If you are including theoretical results...
\begin{enumerate}
  \item Did you state the full set of assumptions of all theoretical results?
    \answerYes{See Section \ref{sec: set-up}.}
        \item Did you include complete proofs of all theoretical results?
    \answerYes{See Appendix for details.}
\end{enumerate}

\item If you ran experiments...
\begin{enumerate}
  \item Did you include the code, data, and instructions needed to reproduce the main experimental results (either in the supplemental material or as a URL)?
    \answerYes{We include the codes In the supplemental materials.}
  \item Did you specify all the training details (e.g., data splits, hyperparameters, how they were chosen)?
    \answerYes{See Section \ref{sec: exp}.}
\item Did you report error bars (e.g., with respect to the random seed after running experiments multiple times)?
    \answerNo{We wants to compare the training dynamic, and different trials may cause different numerical results, which can not be observed clearly in one graph.}
    
    \item Did you include the total amount of compute and the type of resources used (e.g., type of GPUs, internal cluster, or cloud provider)?
    \answerNo{The experiments is certainly simple
and easy to run under CPUs.}

\end{enumerate}

\item If you are using existing assets (e.g., code, data, models) or curating/releasing new assets...
\begin{enumerate}
  \item If your work uses existing assets, did you cite the creators?
    \answerYes{}
  \item Did you mention the license of the assets?
    \answerYes{}
  \item Did you include any new assets either in the supplemental material or as a URL?
    \answerYes{}
  \item Did you discuss whether and how consent was obtained from people whose data you're using/curating?
    \answerNo{These datasets are common.}
  \item Did you discuss whether the data you are using/curating contains personally identifiable information or offensive content?
    \answerNo{These datasets are common.}
\end{enumerate}

\item If you used crowdsourcing or conducted research with human subjects...
\begin{enumerate}
  \item Did you include the full text of instructions given to participants and screenshots, if applicable?
    \answerNA{}
  \item Did you describe any potential participant risks, with links to Institutional Review Board (IRB) approvals, if applicable?
    \answerNA{}
  \item Did you include the estimated hourly wage paid to participants and the total amount spent on participant compensation?
    \answerNA{}
\end{enumerate}

\end{enumerate}


\newpage
\appendix

\section{Some Useful Facts}

In this section, we provide some facts which are useful in the following proofs. 

First of all, we define three notations of optimality. 
\begin{dfn}  \label{dfn: three-points} 
 We say $(x^{\ast},y^{\ast})$ is a saddle point of function $f$, if for all $(x,y)$, it holds that
\begin{align*}
    f(x^{\ast}, y) \le f(x^{\ast},y^{\ast}) \le f(x,y^{\ast}).
\end{align*}
We say $(x^{\ast},y^{\ast})$ is a global minimax point, if for all $x \in \BR^{d_x}, y \in \BR^{d_y} $, it holds that 
\begin{align*}
    f(x^{\ast},y) \le f(x^{\ast}, y^{\ast}) \le \max_{y' \in \BR^{d_y}} f(x,y').
\end{align*}
And we say $(x^{\ast},y^{\ast})$ is a stationary point, if it holds that
\begin{align*}
    \nabla_x f(x^{\ast},y^{\ast}) = \nabla_y f(x^{\ast}, y^{\ast}) = 0.
\end{align*}
\end{dfn}

 For general nonconvex-nonconcave minimax problem, a stationary point or a global minimax point is weaker than a saddle point, i.e. a stationary point or a global minimax point may not  be a saddle point. However, under two-sided PL condition, the above three notations are equivalent.

\begin{lem}[{\citet[Lemma 2.1]{yang2020global}}] \label{lem: three-points-equal}

Under Assumption \ref{asm: two-PL}, it holds that 
\begin{align*}
    (\text{saddle point} ) \Leftrightarrow (\text{global minimax point}) \Leftrightarrow (\text{stationary point} ).
\end{align*}

Further,  if $(x^{\ast}, y^{\ast}) $ is a saddle point of $f$, then 
\begin{align*}
    \max_{y \in \BR^{d_y}} f(x^{\ast}, y) = f(x^{\ast}, y^{\ast}) = \min_{x \in \BR^{d_x}} f(x,y^{\ast}).
\end{align*}
and vice versa.
\end{lem}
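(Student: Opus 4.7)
The plan is to close the cycle saddle $\Rightarrow$ global minimax $\Rightarrow$ stationary $\Rightarrow$ saddle, and then derive the ``furthermore'' claim directly from the saddle-point inequalities. The link saddle $\Rightarrow$ global minimax is immediate: the inequality $f(x^*,y^*) \le f(x,y^*)$ already implies $f(x^*,y^*)\le f(x,y^*)\le \max_{y'} f(x,y')$, so the global minimax definition is satisfied with no extra work. I would record for later the fact that the saddle-point inequalities force $y\mapsto f(x^*,y)$ to attain its maximum at $y^*$ and $x\mapsto f(x,y^*)$ to attain its minimum at $x^*$, so first-order optimality gives $\nabla_x f(x^*,y^*)=\nabla_y f(x^*,y^*)=0$; this cheap observation handles saddle $\Rightarrow$ stationary as well.

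The substantive PL-dependent link is stationary $\Rightarrow$ saddle. Given $\nabla_y f(x^*,y^*)=0$, I would apply the $\mu_y$-PL inequality to $-f(x^*,\cdot)$ to get
\begin{align*}
0 \;=\; \Vert \nabla_y f(x^*,y^*)\Vert^2 \;\ge\; 2\mu_y\Big(\max_{y}f(x^*,y) - f(x^*,y^*)\Big),
\end{align*}
which forces $f(x^*,y)\le f(x^*,y^*)$ for every $y$; symmetrically, $\nabla_x f(x^*,y^*)=0$ together with $\mu_x$-PL of $f(\cdot,y^*)$ yields $f(x^*,y^*)\le f(x,y^*)$ for every $x$. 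Combining the two sides gives the saddle-point property.

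It remains to prove global minimax $\Rightarrow$ stationary, which is the delicate step. The left inequality $f(x^*,y)\le f(x^*,y^*)$ immediately forces $\nabla_y f(x^*,y^*)=0$, and hence (by the PL argument of the previous paragraph) $f(x^*,y^*)=\max_y f(x^*,y)=g(x^*)$. The right inequality then reads $g(x^*)\le g(x)$, so $x^*$ minimizes $g$. The obstacle is that I need $\nabla_x f(x^*,y^*)=0$, not merely $\nabla g(x^*)=0$. I would resolve this by invoking the Danskin-type envelope identity (Lemma A.5 of Nouiehed et al., already cited in the paper), which, under $\mu_y$-PL of $-f(x,\cdot)$, gives differentiability of $g$ with $\nabla g(x^*)=\nabla_x f(x^*,y^*)$ at any maximizer $y^*$ of $f(x^*,\cdot)$; combined with $\nabla g(x^*)=0$ this completes the chain. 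The ``furthermore'' claim is then nearly tautological: setting $y=y^*$ in $f(x^*,y)\le f(x^*,y^*)$ upgrades the inequality to the equality $\max_y f(x^*,y)=f(x^*,y^*)$, and symmetrically for $\min_x f(x,y^*)$; the converse direction reads the same two equalities as the defining saddle-point inequalities. The envelope step is the only non-routine ingredient---everything else is either definitional or a one-line use of the PL inequality.
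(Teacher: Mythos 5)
Your proof is correct. Note that the paper itself gives no proof of this lemma---it is imported verbatim from \citet[Lemma 2.1]{yang2020global}---so the only meaningful check is against the standard argument, and yours matches it: the two easy implications are definitional, stationary $\Rightarrow$ saddle is the one-line PL computation, and you correctly identify that the genuinely delicate step is global minimax $\Rightarrow$ stationary, where the envelope identity $\nabla g(x^*)=\nabla_x f(x^*,y^*)$ (valid at any maximizer $y^*$ under the $\mu_y$-PL condition, via the Danskin-type lemma of Nouiehed et al.\ that the paper also cites) is exactly what is needed to convert $\nabla g(x^*)=0$ into $\nabla_x f(x^*,y^*)=0$.
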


 It is well known that weak duality always holds.
 
\begin{lem}[{\citet[Theorem 1.3.1]{nesterov2018lectures}}] \label{lem: minimax}
Given a function $f$, we have 
\begin{align*}
    \max_{y \in \BR^{d_x}}  \min_{x \in \BR^{d_x }} f(x,y) \le  \min_{ x \in \BR^{d_x}} \max_{y \in \BR^{d_y}} f(x,y).
\end{align*}
\end{lem}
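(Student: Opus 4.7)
The plan is to establish the trivial pointwise chain $\min_{x \in \BR^{d_x}} f(x,y_0) \le f(x_0,y_0) \le \max_{y \in \BR^{d_y}} f(x_0,y)$ for arbitrary fixed $x_0 \in \BR^{d_x}$ and $y_0 \in \BR^{d_y}$, and then take suprema and infima on the appropriate sides. The key observation is that after this pointwise bound, the leftmost expression depends only on $y_0$ and the rightmost only on $x_0$, so the two variables can be optimized independently.

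First I would drop the middle term to obtain $\min_{x \in \BR^{d_x}} f(x,y_0) \le \max_{y \in \BR^{d_y}} f(x_0,y)$ for all $x_0, y_0$. Holding $x_0$ fixed and taking the supremum over $y_0$ on the left gives $\max_{y_0 \in \BR^{d_y}} \min_{x \in \BR^{d_x}} f(x,y_0) \le \max_{y \in \BR^{d_y}} f(x_0,y)$, which is now valid for every $x_0$. Then taking the infimum over $x_0$ on the right yields
\begin{align*}
    \max_{y_0 \in \BR^{d_y}} \min_{x \in \BR^{d_x}} f(x,y_0) \le \min_{x_0 \in \BR^{d_x}} \max_{y \in \BR^{d_y}} f(x_0,y),
\end{align*}
which is exactly the claim (modulo the typo $\BR^{d_x}$ for the outer $\max$ index in the statement, which should be $\BR^{d_y}$).

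There is essentially no obstacle here: the result is purely set-theoretic and uses neither smoothness, the PL condition, nor any structural assumption on $f$. The only mildly delicate point is that if the inner $\min$ or $\max$ is not attained one should read the statement with $\inf$ and $\sup$, but the argument above goes through verbatim under that interpretation since passing to suprema/infima preserves the inequality $a(y_0) \le b(x_0)$ in each variable separately.
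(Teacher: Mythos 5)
Your argument is correct and is the standard weak-duality proof (pointwise bound, then optimize each side in its own variable); the paper itself gives no proof and simply cites Nesterov, so there is nothing to diverge from. Your remarks about the $\BR^{d_x}$ typo in the outer $\max$ and about reading $\min/\max$ as $\inf/\sup$ when the extrema are not attained are both accurate.
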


It is a standard conclusion that the existence of saddle points implies strong duality. Since strong duality is important for the convergence of Catalyst scheme under PL condition, we present this lemma as follows.

\begin{lem} \label{lem: saddle-point-eq}

If $(x^{\ast}, y^{\ast}) $ is a saddle point of function $f$, then $(x^{\ast}, y^{\ast}) $ is also a global minimax point and stationary point of $f$,  and it holds that
\begin{align*}
     \max_{y \in \BR^{d_y}} \min_{x \in \BR^{d_x}} f(x,y) = f(x^{\ast} ,y^{\ast}) = \min_{x \in \BR^{d_x}} \max_{y \in \BR^{d_y}}  f(x,y).
\end{align*}
\end{lem}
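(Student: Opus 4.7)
The plan is to derive all three conclusions directly from the two defining saddle-point inequalities
$f(x^*,y)\le f(x^*,y^*)\le f(x,y^*)$, without needing any PL assumption. This is a classical result, so the strategy is mostly bookkeeping, and no real obstacle should arise.

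First I would show $(x^*,y^*)$ is a global minimax point in the sense of Definition \ref{dfn: three-points}. The left half $f(x^*,y)\le f(x^*,y^*)$ is immediate from one side of the saddle inequality. For the right half, I would chain the other side with the trivial bound $f(x,y^*)\le \max_{y'\in\BR^{d_y}} f(x,y')$ valid for every $x$, giving $f(x^*,y^*)\le \max_{y'} f(x,y')$. That is exactly the global-minimax condition.

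Next I would argue stationarity. Under Assumption \ref{asm: L-smooth} the function $f$ is differentiable, and the saddle inequality implies that $x^*$ is a global minimizer of the differentiable map $x\mapsto f(x,y^*)$ while $y^*$ is a global maximizer of $y\mapsto f(x^*,y)$. Fermat's rule then yields $\nabla_x f(x^*,y^*)=0$ and $\nabla_y f(x^*,y^*)=0$, so $(x^*,y^*)$ is a stationary point.

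Finally for strong duality, I would combine the saddle-point inequality with weak duality (Lemma \ref{lem: minimax}). From $f(x^*,y)\le f(x^*,y^*)$ for every $y$ I get $\max_y f(x^*,y)=f(x^*,y^*)$, hence
\[
\min_{x} \max_{y} f(x,y)\;\le\;\max_{y} f(x^*,y)\;=\;f(x^*,y^*).
\]
Symmetrically, from $f(x^*,y^*)\le f(x,y^*)$ I get $\min_x f(x,y^*)=f(x^*,y^*)$, hence
\[
\max_{y}\min_{x} f(x,y)\;\ge\;\min_{x} f(x,y^*)\;=\;f(x^*,y^*).
\]
Sandwiching these two inequalities with Lemma \ref{lem: minimax}, $\max_y\min_x f\le \min_x\max_y f$, collapses everything to the equality
$\max_y\min_x f(x,y)=f(x^*,y^*)=\min_x\max_y f(x,y)$, which is the claimed strong duality. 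No step is delicate; the only thing to be careful about is invoking differentiability from Assumption \ref{asm: L-smooth} when claiming stationarity.
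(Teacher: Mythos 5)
Your proof is correct and complete: the global-minimax and stationarity claims follow directly from the saddle inequalities (with differentiability supplied by Assumption \ref{asm: L-smooth}), and the sandwich $f(x^*,y^*)\le \max_y\min_x f\le \min_x\max_y f\le f(x^*,y^*)$ via Lemma \ref{lem: minimax} is exactly the standard argument. The paper itself states this lemma as a classical fact without supplying a proof, so there is nothing to diverge from; your write-up fills that gap in the standard way.
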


\begin{lem}[{\citet[Lemma A.1]{yang2020global}}] \label{lem: quadratic-growth}

Under Assumption \ref{asm: two-PL}, then $f(x,y)$ also satisfies the following quadratic growth condition, i.e. for all $x \in \BR^{d_x},y \in \BR^{d_y}$, it holds that
\begin{align*}
    f(x,y) - \min_{x \in \BR^{d_x}} f(x,y) &\ge \frac{\mu_x}{2} \Vert x^{\ast}(y) - x \Vert^2, \\
    \max_{y \in \BR^{d_y}} f(x,y) - f(x,y) &\ge \frac{\mu_y}{2} \Vert y^{\ast}(x) - y \Vert^2,
\end{align*}
where $x^{\ast}(y)$ is the projection of $x$ on the set $\argmin_{x \in \BR^{d_x}} f(x,y)$ and $y^{\ast}(x)$ is the projection of $y$ on the set of $\argmax_{y \in \BR^{d_y}} f(x,y)$. 
\end{lem}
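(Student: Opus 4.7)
The plan is to reduce the two-variable statement to the classical single-variable fact that a $\mu$-PL function satisfies $\mu$-quadratic growth, and then apply this fact twice: once to $h(x)\triangleq f(x,y)$ with $y$ fixed (which is $\mu_x$-PL by Assumption \ref{asm: two-PL}), and once to $h(y)\triangleq -f(x,y)$ with $x$ fixed (which is $\mu_y$-PL). The first application gives the inequality involving $x^{\ast}(y)$ with $h^{\ast}=\min_x f(x,y)$, and the second gives the inequality involving $y^{\ast}(x)$ with $h^{\ast}=-\max_y f(x,y)$, after a sign flip.

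For the single-variable reduction, I would use a gradient-flow/arc-length argument. Let $h:\mathbb{R}^d\to\mathbb{R}$ be differentiable, $\mu$-PL, and bounded below with infimum $h^{\ast}$, and consider the flow $\dot z(t)=-\nabla h(z(t))$ with $z(0)=z$. Along the flow, $\tfrac{d}{dt}(h(z(t))-h^{\ast})=-\Vert\nabla h(z(t))\Vert^2\le -2\mu(h(z(t))-h^{\ast})$, so $h(z(t))-h^{\ast}$ decays exponentially to $0$. The key algebraic step is to differentiate $\sqrt{h(z(t))-h^{\ast}}$ and use PL in the form $\Vert\nabla h(z(t))\Vert\ge\sqrt{2\mu(h(z(t))-h^{\ast})}$ to obtain $\tfrac{d}{dt}\sqrt{h(z(t))-h^{\ast}}\le -\tfrac{\sqrt{2\mu}}{2}\Vert\nabla h(z(t))\Vert$. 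Integrating from $0$ to $\infty$ yields the arc-length bound $\int_0^{\infty}\Vert\nabla h(z(t))\Vert\,dt\le\sqrt{2(h(z)-h^{\ast})/\mu}$. Since $\Vert\dot z(t)\Vert=\Vert\nabla h(z(t))\Vert$ is integrable on $[0,\infty)$, the trajectory is Cauchy and converges to some $z_{\infty}$ with $h(z_{\infty})=h^{\ast}$; hence if $z^{\ast}$ denotes the projection of $z$ onto the minimizer set, then $\Vert z-z^{\ast}\Vert\le\Vert z-z_{\infty}\Vert\le\int_0^{\infty}\Vert\dot z(t)\Vert\,dt$. Squaring gives $h(z)-h^{\ast}\ge\tfrac{\mu}{2}\Vert z-z^{\ast}\Vert^2$.

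The main obstacle I anticipate is making the projection well-defined: we need the slicewise minimizer sets $\argmin_x f(x,y)$ and $\argmax_y f(x,y)$ to be nonempty and closed so that the projections $x^{\ast}(y)$ and $y^{\ast}(x)$ in the statement actually exist. Nonemptiness is supplied directly by Assumption \ref{asm: exist}, and closedness follows from continuity of the slices under Assumption \ref{asm: L-smooth}. A secondary technical point is justifying existence of the flow and its convergence, which is standard under $L$-smoothness (Picard-Lindel\"of gives a unique global flow, and integrability of $\Vert\dot z\Vert$ established above gives convergence to a limit point that must be a minimizer since $h(z(t))\to h^{\ast}$). Once these mild issues are dispatched, the two inequalities follow immediately by specializing the one-variable result to the two slices of $f$.
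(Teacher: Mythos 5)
Your proposal is correct. Note that the paper itself gives no proof of this lemma: it is imported verbatim as Lemma A.1 of \citet{yang2020global}, who in turn rely on the classical fact (due to Karimi et al.) that a $\mu$-PL function satisfies $\mu$-quadratic growth. Your two-step reduction --- apply the one-variable fact to the slice $f(\cdot,y)$ and to $-f(x,\cdot)$ --- is exactly the intended use of the assumptions, and your gradient-flow/arc-length derivation of the one-variable fact is essentially the canonical proof behind the citation: the inequality $\tfrac{d}{dt}\sqrt{h(z(t))-h^{\ast}}\le-\tfrac{\sqrt{2\mu}}{2}\Vert\nabla h(z(t))\Vert$ integrates to the length bound $\int_0^{\infty}\Vert\dot z(t)\Vert\,dt\le\sqrt{2(h(z)-h^{\ast})/\mu}$, and comparing to the distance to the limit point gives quadratic growth. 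Two small points worth making explicit if you write this up: (i) the differentiation of $\sqrt{h(z(t))-h^{\ast}}$ is only valid while $h(z(t))>h^{\ast}$, so you should either dispose of the trivial case $h(z)=h^{\ast}$ up front and observe that once the flow reaches the minimum value it is stationary (the gradient vanishes at a global minimizer), integrating only up to that hitting time; and (ii) the projection onto $\argmin_x f(x,y)$ need not be unique since that set is closed but not necessarily convex, so ``the projection'' should be read as ``a nearest point,'' which exists by nonemptiness (Assumption \ref{asm: exist}) and closedness --- this looseness is inherited from the source lemma and is harmless since only the distance to the set is used downstream. Neither issue affects correctness.
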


Also, we analyze the properties of $g(x)$.
\begin{lem}[{\citet[Lemma 2.1]{yang2020global}}] \label{lem: g(x)-PL}
Under Assumption \ref{asm: two-PL}, then $g(x)$ is $\mu_x$-PL, i.e., for all $x$ we have
\begin{align*}
    \Vert \nabla g(x) \Vert^2 \ge 2 \mu_x (g(x) - g(x^{\ast})).
\end{align*}

\end{lem}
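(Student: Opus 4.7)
The plan is to reduce the PL property of $g$ to the PL property of $f(\cdot,y)$ via an envelope-type identity for the gradient of $g$. First I would invoke the differentiability result \cite[Lemma A.5]{nouiehed2019solving}, which says that under Assumption \ref{asm: L-smooth} together with the PL condition in $y$, the function $g(x)=\max_{y\in\BR^{d_y}}f(x,y)$ is differentiable and
\begin{align*}
\nabla g(x) = \nabla_x f(x, y^{\ast}(x))
\end{align*}
for any $y^{\ast}(x)\in\argmax_{y\in\BR^{d_y}}f(x,y)$. This identity immediately gives $\Vert \nabla g(x) \Vert^2 = \Vert \nabla_x f(x, y^{\ast}(x)) \Vert^2$, so the rest of the argument happens at the level of $f(\cdot, y^{\ast}(x))$.

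The second step applies the $\mu_x$-PL property of $f(\cdot, y^{\ast}(x))$, which is guaranteed by Assumption \ref{asm: two-PL}, to obtain the lower bound
\begin{align*}
\Vert \nabla_x f(x, y^{\ast}(x)) \Vert^2 \ge 2\mu_x\left(f(x,y^{\ast}(x)) - \min_{x'\in\BR^{d_x}}f(x',y^{\ast}(x))\right).
\end{align*}
By the definition of $y^{\ast}(x)$, the first term inside the parentheses equals $g(x)$, so the only remaining task is to upper bound $\min_{x'} f(x', y^{\ast}(x))$ by $g(x^{\ast})$.

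The third and final step is a short chain of inequalities. Fix $x^{\ast}\in\argmin_{x\in\BR^{d_x}} g(x)$, whose existence is ensured by Assumption \ref{asm: exist}. Then, by definition of the minimum and the maximum,
\begin{align*}
\min_{x'\in\BR^{d_x}} f(x', y^{\ast}(x)) \le f(x^{\ast}, y^{\ast}(x)) \le \max_{y'\in\BR^{d_y}} f(x^{\ast}, y') = g(x^{\ast}).
\end{align*}
Combining the three steps yields $\Vert \nabla g(x) \Vert^2 \ge 2\mu_x(g(x) - g(x^{\ast}))$, which is exactly the $\mu_x$-PL condition for $g$ in the sense of Definition \ref{asm: PL}.

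The main obstacle to this plan is justifying the envelope identity for $\nabla g$. Classical Danskin-type results require a unique inner maximizer, whereas the bare two-sided PL assumption does not rule out a non-singleton argmax set. The key observation that resolves this is that the PL lower bound used in step two is invariant under the choice of $y^{\ast}(x)$ within the argmax set, so any measurable selection suffices; this is precisely the content of the cited lemma of \cite{nouiehed2019solving} in the PL setting. All other steps are one-line manipulations, so no further technical difficulty is anticipated.
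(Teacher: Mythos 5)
Your proof is correct: the paper imports this lemma from \citet[Lemma 2.1]{yang2020global} without reproving it, and your argument (the envelope identity $\nabla g(x)=\nabla_x f(x,y^{\ast}(x))$ from \citet[Lemma A.5]{nouiehed2019solving}, followed by the $\mu_x$-PL bound on $f(\cdot,y^{\ast}(x))$ and the weak-duality chain $\min_{x'}f(x',y^{\ast}(x))\le f(x^{\ast},y^{\ast}(x))\le g(x^{\ast})$) is exactly the standard proof given in that reference. The only cosmetic remark is that your argument also uses Assumptions \ref{asm: L-smooth} and \ref{asm: exist} (for differentiability of $g$ and existence of $x^{\ast}$), which the lemma statement leaves implicit but which are in force throughout the paper.
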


\begin{lem}[{\citet[in the proof of Theorem 3.1]{yang2020global}}] \label{lem: bound-Bk}
Under Assumption \ref{asm: two-PL} and \ref{asm: L-smooth}, then for all $x,y$ it holds true that 
\begin{align*}
    \Vert \nabla_x f(x,y) - \nabla g(x) \Vert^2 &\le \frac{2L^2}{\mu_y} (g(x) - f(x,y)).
\end{align*}
\end{lem}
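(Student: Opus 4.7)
The plan is to combine three standard ingredients: an envelope identity for $\nabla g$ under PL conditions, the $L$-smoothness of $f$ in the $y$-direction, and the quadratic growth bound from Lemma \ref{lem: quadratic-growth}. Concretely, for the given pair $(x,y)$, let $y^*(x)$ be the projection of $y$ onto the (nonempty) set $\argmax_{y'\in\BR^{d_y}} f(x,y')$, which is well defined by Assumption \ref{asm: exist}. The aim is to rewrite $\nabla g(x)$ as $\nabla_x f(x,y^*(x))$ and then control $\|\nabla_x f(x,y)-\nabla_x f(x,y^*(x))\|^2$ by $\|y-y^*(x)\|^2$ via smoothness, and finally bound $\|y-y^*(x)\|^2$ by the function-value gap $g(x)-f(x,y)$ via PL/quadratic growth.

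First, I would invoke the envelope property (as in \cite[Lemma A.5]{nouiehed2019solving}) which, under the PL condition on $-f(x,\cdot)$ together with smoothness, guarantees that $g$ is differentiable and $\nabla g(x) = \nabla_x f(x,y^\ast)$ for any maximizer $y^\ast\in\argmax_{y'} f(x,y')$. In particular we may take $y^\ast = y^*(x)$, the projection of $y$ onto that set. Thus
\begin{align*}
\Vert \nabla_x f(x,y) - \nabla g(x) \Vert^2
= \Vert \nabla_x f(x,y) - \nabla_x f(x,y^*(x)) \Vert^2.
\end{align*}

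Next, since each $f_i$ is $L$-smooth by Assumption \ref{asm: L-smooth}, the average $f$ is $L$-smooth as well, so $\|\nabla_x f(x,y)-\nabla_x f(x,y^*(x))\|^2 \le L^2 \|y-y^*(x)\|^2$. Finally, Lemma \ref{lem: quadratic-growth} (the $y$-side quadratic growth consequence of the $\mu_y$-PL condition on $-f(x,\cdot)$) gives
\begin{align*}
\frac{\mu_y}{2}\Vert y^*(x)-y\Vert^2 \le \max_{y'\in\BR^{d_y}} f(x,y') - f(x,y) = g(x) - f(x,y).
\end{align*}
Chaining the two bounds yields exactly $\|\nabla_x f(x,y)-\nabla g(x)\|^2 \le (2L^2/\mu_y)(g(x)-f(x,y))$, which is the claim.

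The main obstacle, such as it is, lies in step one: under PL rather than strong concavity the argmax set may not be a singleton, so one must be careful that the envelope identity $\nabla g(x) = \nabla_x f(x,y^\ast)$ does not depend on which maximizer $y^\ast$ is chosen. This is exactly what \cite[Lemma A.5]{nouiehed2019solving} provides under the PL condition (the $\nabla_x f(x,\cdot)$ values agree across the argmax set), so once we cite that result, the remaining two steps are routine applications of smoothness and quadratic growth already established in the paper.
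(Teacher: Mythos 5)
Your proof is correct and follows exactly the route the paper indicates: the paper states this lemma without proof (attributing it to the proof of Theorem 3.1 in \citet{yang2020global}) and remarks immediately afterwards that it is ``a direct result of the quadratic growth property implied by PL condition and $L$-smooth property of $f$,'' which is precisely your chain of envelope identity, $L$-smoothness in $y$, and Lemma \ref{lem: quadratic-growth}. Your added care about the well-definedness of $\nabla g(x)=\nabla_x f(x,y^\ast)$ across a possibly non-singleton argmax set is a reasonable point to flag, and citing \cite[Lemma A.5]{nouiehed2019solving} handles it.
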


The above lemma is a direct result of the quadratic growth property implied by PL condition and $L$-smooth property of function $f(x,y)$. Using the definition of $\mu_y$-PL in $y$, we can also show the relationship between $ \Vert \nabla_x f(x,y) - \nabla g(x) \Vert^2 $ and $ \Vert \nabla_y f(x,y) \Vert^2$ as follows.

\begin{lem} \label{lem: new-bound-Bk}

Under Assumption \ref{asm: two-PL} and \ref{asm: L-smooth}, then for all $x,y$ it holds true that 
\begin{align*}
    \Vert \nabla_x f(x,y) - \nabla g(x) \Vert^2 &\le \frac{L^2}{\mu_y^2} \Vert \nabla_y f(x,y) \Vert^2
\end{align*}
\end{lem}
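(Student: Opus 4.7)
The plan is to bound the gradient gap pointwise via the canonical two-step chain: first use smoothness in $y$ to replace $\nabla g(x)$ by an evaluation of $\nabla_x f$ at a maximizer, then use the PL condition (together with its quadratic-growth consequence) to control the distance to that maximizer by the norm of $\nabla_y f$.

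Concretely, I would first invoke the Danskin-type differentiability result for $g$ under the $y$-side PL condition (\citet[Lemma A.5]{nouiehed2019solving}, which the paper cites just before Definition \ref{dfn: epsilon-stationary}). For any $x$, let $y^{\ast}(x)$ be the projection of $y$ onto $\argmax_{y'\in\BR^{d_y}} f(x,y')$. Then $\nabla g(x) = \nabla_x f(x, y^{\ast}(x))$, so by the $L$-smoothness of $f$ (Assumption \ref{asm: L-smooth}),
\begin{align*}
\Vert \nabla_x f(x,y) - \nabla g(x) \Vert^2 = \Vert \nabla_x f(x,y) - \nabla_x f(x, y^{\ast}(x)) \Vert^2 \le L^2 \Vert y - y^{\ast}(x) \Vert^2.
\end{align*}

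Next I would bound $\Vert y - y^{\ast}(x) \Vert$ in terms of $\Vert \nabla_y f(x,y) \Vert$. The $y$-side PL condition in Assumption \ref{asm: two-PL} applied to $-f(x,\cdot)$ gives $\Vert \nabla_y f(x,y) \Vert^2 \ge 2\mu_y (g(x) - f(x,y))$, and the quadratic growth consequence of PL stated in Lemma \ref{lem: quadratic-growth} gives $g(x) - f(x,y) \ge \tfrac{\mu_y}{2} \Vert y - y^{\ast}(x) \Vert^2$. Chaining these,
\begin{align*}
\Vert \nabla_y f(x,y) \Vert^2 \ge \mu_y^2 \Vert y - y^{\ast}(x) \Vert^2,
\end{align*}
and substituting into the previous display yields the claim.

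The only subtlety is step one: under PL alone the argmax set need not be a singleton, so one must make sure the Danskin-style formula for $\nabla g$ is available. This is exactly what \citet[Lemma A.5]{nouiehed2019solving} establishes (the gradient is the same at every maximizer, so any choice of projection $y^{\ast}(x)$ works), and the paper already appeals to that lemma in Section~\ref{sec: set-up}. Everything else is a one-line application of smoothness and a one-line composition of the PL inequality with quadratic growth, so no additional machinery is required.
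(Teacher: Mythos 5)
Your proof is correct and uses essentially the same ingredients as the paper, which obtains the bound by combining Lemma \ref{lem: bound-Bk} (i.e. $\Vert \nabla_x f(x,y)-\nabla g(x)\Vert^2 \le \tfrac{2L^2}{\mu_y}(g(x)-f(x,y))$) with the $\mu_y$-PL inequality $\Vert\nabla_y f(x,y)\Vert^2\ge 2\mu_y(g(x)-f(x,y))$. The only cosmetic difference is that you route the chain through the distance $\Vert y-y^{\ast}(x)\Vert^2$ instead of the function-value gap $g(x)-f(x,y)$; the two intermediate quantities are linked by exactly the same smoothness, quadratic-growth, and PL facts, so the arguments coincide.
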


\begin{lem}[{\citet[Lemma A.5]{nouiehed2019solving}}] \label{lem: g(x)-L-smooth}

Under Assumption \ref{asm: one-PL} and \ref{asm: L-smooth}, then $g(x) $ satisfies $(L+{L^2} /{\mu_y})$-smooth,that is, it holds for all $x,x'$ that
\begin{align*}
    \Vert \nabla g(x) - \nabla g(x') \Vert^2 \le \left (L+ \frac{L^2}{\mu_y}\right) \Vert x - x' \Vert^2.
\end{align*}
Further, noting that ${L}/ {\mu_y} \ge 1$, it implies that $g(x)$ is $({2L^2}/ {\mu_y})$-smooth.

\end{lem}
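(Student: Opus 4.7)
The plan is to prove Lipschitz continuity of $\nabla g$ in two stages: (i) an envelope-type formula $\nabla g(x)=\nabla_x f(x,y^{\ast})$ for any $y^{\ast}\in Y^{\ast}(x):=\arg\max_{y\in\BR^{d_y}}f(x,y)$, and (ii) a displacement bound showing that the projection of a maximizer in $Y^{\ast}(x)$ onto $Y^{\ast}(x')$ moves by at most $(L/\mu_y)\Vert x-x'\Vert$. The main subtlety relative to the strongly concave case is that $Y^{\ast}(x)$ need not be a singleton under only PL, so I would carry along orthogonal projections onto the closed maximizer sets rather than relying on uniqueness.

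For the envelope formula I would use a quadratic sandwich. Pick $y^{\ast}\in Y^{\ast}(x)$ and let $y^{\ast\prime}$ denote the projection of $y^{\ast}$ onto $Y^{\ast}(x')$. The lower bound $g(x')-g(x)\ge f(x',y^{\ast})-f(x,y^{\ast})$ combined with $L$-smoothness of $f$ in $x$ yields a first-order lower bound with gradient $\nabla_x f(x,y^{\ast})$. For the matching upper bound, since $f(x,y^{\ast\prime})\le g(x)=f(x,y^{\ast})$, we have $g(x')-g(x)\le f(x',y^{\ast\prime})-f(x,y^{\ast\prime})$; then $L$-smoothness of $f$ plus the displacement bound of the next step show this is at most $\nabla_x f(x,y^{\ast})^{\top}(x'-x)+C\Vert x-x'\Vert^{2}$ for some constant $C$, giving differentiability and the desired formula.

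For the displacement bound, $y^{\ast}\in Y^{\ast}(x)$ gives $\nabla_y f(x,y^{\ast})=0$, so $L$-smoothness of $f$ yields
\[
\Vert\nabla_y f(x',y^{\ast})\Vert=\Vert\nabla_y f(x',y^{\ast})-\nabla_y f(x,y^{\ast})\Vert\le L\Vert x-x'\Vert.
\]
Combining the $\mu_y$-PL condition for $-f(x',\cdot)$ with the corresponding quadratic growth consequence (analogous to the $y$-side of Lemma~\ref{lem: quadratic-growth}, which only needs one-sided PL) produces $\mu_y\Vert y^{\ast}-y^{\ast\prime}\Vert\le\Vert\nabla_y f(x',y^{\ast})\Vert$, so $\Vert y^{\ast}-y^{\ast\prime}\Vert\le(L/\mu_y)\Vert x-x'\Vert$.

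Putting the pieces together, $L$-smoothness of $f$ together with the displacement bound gives
\[
\Vert\nabla g(x)-\nabla g(x')\Vert=\Vert\nabla_x f(x,y^{\ast})-\nabla_x f(x',y^{\ast\prime})\Vert\le L\sqrt{\Vert x-x'\Vert^{2}+\Vert y^{\ast}-y^{\ast\prime}\Vert^{2}}\le\left(L+\frac{L^{2}}{\mu_y}\right)\Vert x-x'\Vert,
\]
which is the claimed smoothness bound. I expect the displacement bound, and specifically the PL-to-quadratic-growth step with projection onto a possibly non-singleton $Y^{\ast}(x')$, to be the main delicate point; the envelope identification and the final chaining are then routine.
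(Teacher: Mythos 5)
Your proposal is correct. The paper does not prove this lemma itself — it imports it by citation from \citet[Lemma A.5]{nouiehed2019solving} — and your reconstruction follows essentially the same route as that source: a Danskin-type sandwich establishing $\nabla g(x)=\nabla_x f(x,y^{\ast})$ for any maximizer, plus the PL-implied error bound $\Vert \nabla_y f(x',y^{\ast})\Vert \ge \mu_y \Vert y^{\ast}-y^{\ast\prime}\Vert$ giving the $(L/\mu_y)$-Lipschitz displacement of maximizers (the same mechanism as Lemma~\ref{lem: prox-argmax} in the appendix). You correctly identify the non-singleton maximizer set as the only delicate point and handle it with projections onto the closed set $\argmax_y f(x',y)$, so there is no gap.
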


\begin{lem}[{\citet[Lemma A.3]{nouiehed2019solving}}] \label{lem: prox-argmax}
Under Assumption \ref{asm: L-smooth} and~\ref{asm: one-PL}, for any $x,x'$, any $y^\ast(x) \in \arg \max_{y \in \BR^{d_y}} f(x,y)$, there exists $y^*(x') \in \arg \max_{y' \in \BR^{d_y}} f(x',y')$ such that
\begin{align*}
    \Vert y^\ast(x) - y^{\ast}(x') \Vert^2 \le \frac{L^2}{\mu_y^2} \Vert x - x' \Vert^2.
\end{align*}
\end{lem}


\section{Two-Timescale GDA Matches AGDA}

As a warm-up, we study GDA as well as AGDA with full gradient calculation in this section. After that, it would be easy to extend the analysis to the stochastic setting.

\begin{algorithm*}[htbp] 
\caption{AGDA $(f, (x_0,y_0), K, \tau_x, \tau_y)$} 
\begin{algorithmic}[] \label{alg: AGDA}
    \STATE \textbf{for} $k = 0, 1, \dots, K-1$ \textbf{do}\\[0.15cm]
    \STATE \quad $x_{k+1} = x_k - \tau_x \nabla_x f(x_k,y_k) $ \\[0.15cm]
    \STATE \quad $y_{k+1} = y_k + \tau_y \nabla_y f(x_{k+1},y_k)$ \\[0.15cm]
    \STATE\textbf{end for} \\[0.15cm]
    \STATE \textbf{option I}  (two-sided PL): \textbf{return} $(x_K,y_K)$  \\
    \STATE \textbf{option II} (one-sided PL): \textbf{return} $(\hat x,\hat y)$ chosen uniformly at random from $\{ (x_k,y_k)\}_{k=0}^{K-1}$.
\end{algorithmic}
\end{algorithm*}

\begin{algorithm*}[htbp] 
\caption{GDA $(f, (x_0,y_0), K, \tau_x, \tau_y)$} 
\begin{algorithmic}[] \label{alg: GDA}
    \STATE \textbf{for} $k = 0, 1, \dots, K-1$ \textbf{do}\\[0.15cm]
    \STATE \quad $x_{k+1} = x_k - \tau_x \nabla_x f(x_k,y_k) $ \\[0.15cm]
    \STATE \quad $y_{k+1} = y_k + \tau_y \nabla_y f(x_{k},y_k)$ \\[0.15cm]
    \STATE\textbf{end for} \\[0.15cm]
    \STATE \textbf{option I}  (two-sided PL): \textbf{return} $(x_K,y_K)$  \\
    \STATE \textbf{option II} (one-sided PL): \textbf{return} $(\hat x,\hat y)$ chosen uniformly at random from $\{ (x_k,y_k) \}_{k=0}^{K-1}$
\end{algorithmic}
\end{algorithm*}

\subsection{Convergence under Two-Sided PL condition} \label{sec: GDA}

Under the two-sided PL condition, it is known that AGDA \cite{yang2020global} can find an $\epsilon$-optimal solution with a complexity of $\tilde \fO( n \kappa_x\kappa_y^2 \log ({1}/{\epsilon}))$ when $\kappa_x \gtrsim \kappa_y$. However, the authors left us the question that whether GDA can converge under the same setting. We answer this question affirmatively in this section. We show that the same convergence rate can be achieved by GDA  with simultaneous updates.

We define the following Lyapunov function  suggested by \citet{doan2022convergence}:
\begin{align*}
\mathcal{V}_k &= \mathcal{A}_k + \frac{\lambda \tau_x}{\tau_y} \mathcal{B}_k,
\end{align*}
where $\mathcal{A}_k =  g(x_k ) - g({x}^{\ast})$, $ \mathcal{B}_k = g(x_k) - f(x_k,y_k)$. Then we can obtain the following statement.

\begin{thm} \label{thm: GDA}
Suppose function $f(x,y)$ satisfies $L$-smooth, $\mu_x$-PL in $x$, $\mu_y$-PL in $y$. Let $\tau_y = {1}/{L}$, $\lambda  = {6L^2}/{\mu_y^2}$ and $\tau_x = {\tau_y}/{(22\lambda)}$, then the sequence $\{ (x_k,y_k) \}_{k=1}^{K}$ generated by Algorithm \ref{alg: GDA} satisfies:
\begin{align*}
    \fV_{k+1} \le \left (1 - \frac{\mu_x \tau_x}{2} \right)^k \fV_k.
\end{align*}
\begin{proof}
Since we know that $g$ is $({2L^2}/{\mu_y})$- smooth by Lemma \ref{lem: g(x)-L-smooth}, let $\tau_x \le {\mu_y}/{(2 L^2)}$, we have 
\begin{align} \label{eq: 2-1}
\begin{split}
    g(x_{k+1})  & \le g(x_k ) - g(x^{\ast}) + \nabla g(x_k)^\top(x_{k+1} - x_k) + \frac{L^2}{\mu_y} \Vert x_{k+1} - x_k \Vert^2 \\
    &\le g(x_k)  - \tau_x\nabla g(x_k)^\top \nabla_x f(x_k,y_k) + \frac{\tau_x}{2} \Vert \nabla_x f(x_k,y_k) \Vert^2 \\
    &= g(x_k)  - \frac{\tau_x}{2} \Vert \nabla g(x_k)\Vert^2 +\frac{\tau_x}{2} \Vert \nabla  g(x_k ) - \nabla_x f(x_k,y_k) \Vert^2,
\end{split}
\end{align}
which implies 
\begin{align} \label{eq: 2-2}
    \fA_{k+1} \le \fA_k - \frac{\tau_x}{2} \Vert \nabla g(x_k)\Vert^2 +\frac{\tau_x}{2} \Vert \nabla  g(x_k ) - \nabla_x f(x_k,y_k) \Vert^2.
\end{align}
Using the property of $L$-smooth, we know that the difference between $f(x_k,y_k)$ and $f(x_{k+1},y_{k+1})$ can be bounded. Noting that $\tau_x \le {1}/{L}$, we can obtain
\begin{align} \label{eq: 2-3}
\begin{split}
    f(x_k,y_k) - f(x_{k+1}, y_k) &\le  -\nabla_x f(x_k,y_k)^\top(x_{k+1} -x_k) + \frac{L}{2} \Vert x_{k+1} - x_k \Vert^2 \\
& =\tau_x \Vert \nabla_x f(x_k,y_k) \Vert^2 + \frac{ \tau_x^2 L}{2} \Vert \nabla_x f(x_k,y_k) \Vert^2  \\
&\le \frac{3 \tau_x}{2} \Vert \nabla_x f(x_k,y_k) \Vert^2.
\end{split}
\end{align}

Let $\tau_y < 1/L$, then we have 
\begin{align} \label{eq: 2-4}
\begin{split}
& f(x_{k+1},y_k) - f(x_{k+1},y_{k+1}) \\
\le& - \nabla_y f(x_{k+1},y_k)^\top(y_{k+1} -y_k) + \frac{L}{2} \Vert y_{k+1} - y_k \Vert^2 \\
\le& - \tau_y \nabla_y f(x_{k+1},y_k)^\top \nabla_y f(x_k,y_k)  + \frac{ \tau_y}{2} \Vert \nabla_y f(x_{k},y_k) \Vert^2 \\
=& - \frac{\tau_y}{2} \Vert \nabla_y f(x_{k+1},y_k) \Vert^2 + \frac{\tau_y}{2} \Vert \nabla_y f(x_k,y_k) - \nabla_y f(x_{k+1},y_k) \Vert^2 \\
\le& -\frac{\tau_y}{4} \Vert \nabla_y f(x_k,y_k) \Vert^2 + \tau_y \Vert \nabla_y f(x_k,y_k) - \nabla_y f(x_{k+1},y_k) \Vert^2 \\
\le& -\frac{\tau_y}{4} \Vert \nabla_y f(x_k,y_k) \Vert^2 + \tau_y \tau_x^2 L^2 \Vert \nabla_x f(x_k,y_k) \Vert^2 \\
\le& -\frac{\tau_y}{4} \Vert \nabla_y f(x_k,y_k) \Vert^2 + \tau_x \Vert \nabla_x f(x_k,y_k) \Vert^2,
\end{split}
\end{align}
where in the first inequality we use $f$ is $L$-smooth, and we use $\tau_y \le {1}/{L}$ in the second one and  Young's inequality of $ -\Vert a- b \Vert^2 \le \frac{1}{2} \Vert a \Vert^2 +\Vert b\Vert^2$ in the third one.

Combing (\ref{eq: 2-3}) and (\ref{eq: 2-5}), we can see that
\begin{align} \label{eq: 2-5}
    f(x_k,y_k) - f(x_{k+1},y_{k+1}) &\le - \frac{\tau_y}{4} \Vert \nabla_y f(x_k,y_k) \Vert^2 + \frac{5 \tau_x}{2} \Vert \nabla_x f(x_k,y_k) \Vert^2.
\end{align}
Now we can describe how $\fB_{k+1}$ declines compared with $\fB_k$, using (\ref{eq: 2-1}) and (\ref{eq: 2-5}), we have
\begin{align} \label{eq: 2-6}
    \begin{split}
    \fB_{k+1} 
    &= g(x_{k+1}) -g(x_k ) + g(x_k) - f(x_k,y_k) + f(x_k,y_k) - f(x_{k+1},y_{k+1}) \\
&\le \fB_k - \frac{\tau_x}{2} \Vert \nabla g(x_k)\Vert^2 +\frac{\tau_x}{2} \Vert \nabla  g(x_k ) - \nabla_x f(x_k,y_k) \Vert^2   \\
&\quad - \frac{\tau_y}{4} \Vert \nabla_y f(x_k,y_k) \Vert^2 + \frac{5 \tau_x}{2} \Vert \nabla_x f(x_k,y_k) \Vert^2. 
\end{split}
\end{align}
Using the inequality $\Vert \nabla_x f(x_k,y_k) \Vert^2 \le 2 \Vert \nabla g(x_k) \Vert^2 + 2 \Vert \nabla g(x_k) - \nabla_x f(x_k,y_k) \Vert^2$, we have
\begin{align*}
    \fB_{k+1} &\le \fB_k + \frac{9\tau_x}{2} \Vert \nabla g(x_k) \Vert^2 + \frac{11 \tau_x}{2} \Vert \nabla g(x_k) - \nabla_x f(x_k,y_k) \Vert^2 - \frac{\tau_y}{4} \Vert \nabla_y f(x_k,y_k) \Vert^2.
\end{align*}
By Lemma \ref{lem: g(x)-PL}, Lemma \ref{lem: bound-Bk} and Assumption \ref{asm: two-PL}, we have 
\begin{align} \label{eq: 2-7}
    \begin{split}
        \Vert \nabla g(x_k) \Vert^2 &\ge 2 \mu_x ( g(x_k) - g(x^{\ast})), \\
\Vert \nabla_x f(x_k,y_k) - \nabla g(x_k) \Vert^2 &\le \frac{2L^2}{\mu_y} (g(x_k) - f(x_k,y_k)), \\
\Vert \nabla_y f (x_{k} ,y_k) \Vert^2 &\ge 2 \mu_y (g(x_{k}) - f(x_{k},y_k)).
    \end{split}
\end{align}
Since we let $\tau_y = {1}/ {L}$, $\lambda  = {6L^2} /{\mu_y^2}$ and $\tau_x = {\tau_y} / {(22\lambda)}$, we can obtain
\begin{align} \label{eq: 2-8}
    \begin{split}
        \mathcal{V}_{k+1} &= \mathcal{A}_{k+1} + \frac{\lambda \tau_x}{\tau_y} \mathcal{B}_{k+1} \\
&\le \mathcal{A}_{k} + \frac{\lambda \tau_x}{\tau_y} \mathcal{B}_{k} - \left (1 - \frac{9 \lambda \tau_x}{\tau_y} \right) \frac{\tau_x}{2}\Vert  \nabla g(x_k) \Vert^2 \\
&\quad + \left (1 +\frac{11\lambda \tau_x}{\tau_y} \right )\frac{\tau_x}{2} \Vert \nabla_x f(x_k, y_k) - \nabla g(x_k) \Vert^2 - \frac{\lambda \tau_x}{4} \Vert \nabla_y f(x_k, y_k) \Vert^2 \\
&\le \mathcal{A}_{k}  - \left (1- \frac{9 \lambda \tau_x}{\tau_y} \right ) \tau_x \mu_x \mathcal{A}_k +  \frac{\lambda \tau_x}{\tau_y} \mathcal{B}_{k}+ \left (1 +\frac{11\lambda \tau_x}{\tau_y} \right) \frac{\tau_xL^2 }{\mu_y} \mathcal{B}_k - \frac{\lambda \tau_x \mu_y}{2} \mathcal{B}_k \\
&\le \left (1- \frac{\mu_x \tau_x}{2} \right ) \mathcal{A}_k +  \left (1- \frac{\mu_y \tau_y}{4} \right ) \frac{\lambda \tau_x}{\tau_y} \mathcal{B}_k \\
&\le \left (1- \frac{\mu_x \tau_x}{2} \right)  \mathcal{V}_k,
    \end{split}
\end{align}
where in the second inequality we use $ {11\lambda \tau_x} / {\tau_y}\le {1}/{2}$ by the choices of $\tau_x,\tau_y$ and $\lambda$, while we use the fact that ${ 3\tau_x L^2} / {\mu_y} \le {\lambda \tau_x \mu_y}/{2}$ in the third one and ${\mu_x \tau_x} \le {\mu_y \tau_y} / {2}$ in the last one.

\end{proof}
\end{thm}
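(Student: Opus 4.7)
The plan is to control the Lyapunov function $\fV_k = \fA_k + (\lambda\tau_x/\tau_y)\fB_k$ with $\fA_k = g(x_k) - g(x^{\ast})$ and $\fB_k = g(x_k) - f(x_k,y_k)$, and to establish a per-step contraction $\fV_{k+1} \le (1-\mu_x\tau_x/2)\,\fV_k$ (the iterated form in the statement then follows immediately). This is the same Lyapunov that drives the AGDA analysis of \cite{yang2020global,doan2022convergence}; the new wrinkle for simultaneous GDA is that the $y$-update applies $\nabla_y f(x_k,y_k)$ rather than $\nabla_y f(x_{k+1},y_k)$, producing a cross-term proportional to $\|x_{k+1}-x_k\|^2$ that has to be absorbed by taking $\tau_x \ll \tau_y$.

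First, for $\fA_k$ I would expand $g(x_{k+1})$ using the $(2L^2/\mu_y)$-smoothness of $g$ (Lemma \ref{lem: g(x)-L-smooth}) and complete the square between $\nabla g(x_k)$ and $\nabla_x f(x_k,y_k)$, yielding $\fA_{k+1} \le \fA_k - \tfrac{\tau_x}{2}\|\nabla g(x_k)\|^2 + \tfrac{\tau_x}{2}\|\nabla g(x_k)-\nabla_x f(x_k,y_k)\|^2$, valid whenever $\tau_x \le \mu_y/(2L^2)$. Second, for $\fB_k$ I would decompose $f(x_k,y_k)-f(x_{k+1},y_{k+1})$ along the two coordinates. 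The $x$-part is bounded by roughly $\tfrac{3\tau_x}{2}\|\nabla_x f(x_k,y_k)\|^2$ via $L$-smoothness in $x$ and $\tau_x \le 1/L$. For the $y$-part $f(x_{k+1},y_k)-f(x_{k+1},y_{k+1})$, $L$-smoothness in $y$ combined with a Young-type inequality $-\|a\|^2 \le -\tfrac{1}{2}\|b\|^2 + \|a-b\|^2$ taken at $a = \nabla_y f(x_{k+1},y_k)$ and $b = \nabla_y f(x_k,y_k)$ produces a clean $-\tfrac{\tau_y}{4}\|\nabla_y f(x_k,y_k)\|^2$ plus an error $\tau_y L^2\|x_{k+1}-x_k\|^2 = \tau_y\tau_x^2 L^2\|\nabla_x f(x_k,y_k)\|^2$; since $\tau_y\le 1/L$, this ``simultaneous-update cost'' is at most $\tau_x\|\nabla_x f(x_k,y_k)\|^2$.

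Combining, $\fB_{k+1}-\fB_k$ is bounded above by $-\tfrac{\tau_y}{4}\|\nabla_y f(x_k,y_k)\|^2$ plus $\fO(\tau_x)$ multiples of $\|\nabla_x f(x_k,y_k)\|^2$ and $\|\nabla g(x_k)-\nabla_x f(x_k,y_k)\|^2$; the elementary bound $\|\nabla_x f\|^2 \le 2\|\nabla g\|^2 + 2\|\nabla g-\nabla_x f\|^2$ reduces everything to the three quantities $\|\nabla g(x_k)\|^2$, $\|\nabla g(x_k)-\nabla_x f(x_k,y_k)\|^2$, and $\|\nabla_y f(x_k,y_k)\|^2$. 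I would then invoke the PL/quadratic-growth estimates $\|\nabla g(x_k)\|^2 \ge 2\mu_x\fA_k$ (Lemma \ref{lem: g(x)-PL}), $\|\nabla g(x_k)-\nabla_x f(x_k,y_k)\|^2 \le (2L^2/\mu_y)\fB_k$ (Lemma \ref{lem: bound-Bk}), and $\|\nabla_y f(x_k,y_k)\|^2 \ge 2\mu_y\fB_k$ (Assumption \ref{asm: two-PL}) to turn the recursion into a linear inequality purely in $\fA_k$ and $\fB_k$.

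Finally, the tuning $\tau_y = 1/L$, $\lambda \asymp L^2/\mu_y^2$, $\tau_x \asymp \tau_y/\lambda$ plays two roles: the large weight $\lambda$ ensures the $-\tfrac{\lambda\tau_x\mu_y}{2}\fB_k$ piece dominates the stray $\fB_k$ coming from $\|\nabla g-\nabla_x f\|^2$, and the tiny $\tau_x$ ensures that the positive $\|\nabla g(x_k)\|^2$ contribution to $\fB_{k+1}$ (of order $(\lambda\tau_x/\tau_y)\cdot\tau_x$) is at most half of the clean $-\tfrac{\tau_x}{2}\|\nabla g(x_k)\|^2$ descent inherited from $\fA_{k+1}$. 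The final condition $\mu_x\tau_x \le \mu_y\tau_y/2$, which uses $\kappa_x \gtrsim \kappa_y$, equalises the two contractions and delivers $\fV_{k+1}\le(1-\mu_x\tau_x/2)\fV_k$. The main obstacle I expect is the bookkeeping in the $\fB$-recursion: tracking every $\tau_x\|\nabla_x f\|^2$ cross-term created by the simultaneous $y$-update, and verifying that once $\tau_x$ has been shrunk by the factor $\lambda$ these really fit inside the $\fA$-descent budget without degrading the contraction rate.
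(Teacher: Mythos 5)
Your proposal is correct and follows essentially the same route as the paper's proof: the same Lyapunov function, the same descent bound on $\fA_k$ via the $(2L^2/\mu_y)$-smoothness of $g$, the same coordinate-wise decomposition of $f(x_k,y_k)-f(x_{k+1},y_{k+1})$ with the Young-inequality treatment of the simultaneous-update cross-term $\tau_y\tau_x^2L^2\Vert\nabla_x f\Vert^2\le\tau_x\Vert\nabla_x f\Vert^2$, and the same PL/quadratic-growth reductions followed by the $\tau_y=1/L$, $\lambda\asymp L^2/\mu_y^2$, $\tau_x\asymp\tau_y/\lambda$ tuning. No substantive differences.
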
 

Now we show that the convergence of $\fV_k$ is sufficient to guarantee the convergence to an $\epsilon$-saddle point, defined as follows.

\begin{dfn}
Under Assumption \ref{asm: exist}, we call $(x,y)$ an $\epsilon$-saddle point of problem (\ref{prob:main}) if if holds that $\Vert x - x^* \Vert^2 + \Vert y - y^* \Vert^2 \le \epsilon $ for some saddle point $(x^*,y^*)$.
\end{dfn}

\begin{cor} \label{cor: GDA-saddle}

Suppose  function $f(x,y)$ satisfies $L$-smooth, $\mu_x$-PL in $x$, $\mu_y$-PL in $y$ and  $\kappa_x \gtrsim \kappa_y$. Define $\tau_x,\tau_y$ as in Lemma \ref{thm: GDA} ,then  then the sequence $\{ (x_k,y_k) \}_{k=1}^{K}$ generated by Algorithm \ref{alg: GDA} satisfies:
\begin{align} \label{eq: 11.1}
    \Vert x_k - x^{\ast} \Vert^2 + \Vert y_k - y^{\ast } \Vert^2 \le \frac{2c^k}{(1-\sqrt{c})^2}  \max \left \{ \frac{4}{\mu_x}, \frac{88}{\mu_y} \right\} \fV_0. 
\end{align}
where $c = 1 - {\mu_x \tau_x}/{2}$. Further, Algorithm \ref{alg: GDA} can find an $\epsilon$-saddle point with no more than $\mathcal{O}(n \kappa_x \kappa_y^2 \log ({ \kappa_x \kappa_y}/{\epsilon}))$ stochastic first-order oracle calls.

\begin{proof}

The proof is similar to the proof of Theorem 3.2 in \cite{yang2020global}.

By Lemma \ref{lem: quadratic-growth} and the fact that $2 \tau_x^2 L^2 \le 1$, $\tau_x \le {\mu_y}/{(2L^2)}$ and $\tau_y \le {1}/{L}$ by the choices of $\tau_x,\tau_y$, we can see that
\begin{align} \label{eq: 2-9}
    \begin{split}
        &\quad \Vert x_{k+1} - x_k \Vert^2 + \Vert y_{k+1} - y_k \Vert^2 \\
        &=
        \tau_x^2 \Vert \nabla_x f(x_k,y_k) \Vert^2 + \tau_y^2 \Vert \nabla_y f(x_k,y_k) \Vert^2 \\
        &= \tau_x^2 \Vert \nabla_x f(x_k,y_k) \Vert^2 + \tau_y^2 \Vert \nabla_y f(x_k,y_k) - \nabla_y f(x_k, y^{\ast}(x_k) \Vert^2 \\
        &\le \tau_x^2 \Vert \nabla_x f(x_k,y_k) \Vert^2  + \Vert y_k - y^{\ast}(x_k) \Vert^2 \\
        &\le 2\tau_x^2 \Vert \nabla g(x_k) \Vert^2 + 2 \tau_x^2 \Vert \nabla g(x_k) - \nabla_x f(x_k,y_k) \Vert^2 +\Vert y_k - y^{\ast}(x_k) \Vert^2 \\
        &\le 2 \Vert x_k -x^{\ast} \Vert^2 + 2 \Vert y_k - y^{\ast}(x_k) - y_k \Vert^2 \\
        &\le \frac{4}{\mu_x} \fA_k + \frac{4}{\mu_y} \fB_k \\
        &\le \max \left \{ \frac{4}{\mu_x}, \frac{88}{\mu_y} \right \} \fV_k \\
        &\le \max \left \{ \frac{4}{\mu_x}, \frac{88}{\mu_y} \right \} \left (1 - \frac{\mu_x \tau_x}{2} \right )^k \fV_0,
    \end{split}
\end{align}
where in the last inequality we use ${\lambda \tau_x}/{\tau_y} = {1}/{22}$. 
Then we have
\begin{align*}
    \Vert x_{k+1} - x_k \Vert + \Vert y_{k+1} - y_k \Vert \le \left (1 - \frac{\mu_x \tau_x}{2} \right)^{k/2}  \sqrt{2 \max \left \{ \frac{4}{\mu_x}, \frac{88}{\mu_y} \right \} \fV_0}.
\end{align*}
For $n \ge k$, we obtain
\begin{align*}
\begin{split}
    \Vert x_n - x_k \Vert + \Vert y_n  - y_k \Vert &\le \sum_{i=k}^{n-1} \Vert x_{i+1} - x_i \Vert^2 + \Vert y_{i+1} - y_i \Vert^2\\
    &\le \sqrt{2 \max \left \{ \frac{4}{\mu_x}, \frac{88}{\mu_y} \right \} \fV_0} \sum_{i=k}^{\infty} \left(1- \frac{\mu_x \tau_x}{2} \right )^{i/2} \\
    &\le \frac{c^{k/2}}{1 - \sqrt{c}} \sqrt{2 \max \left \{ \frac{4}{\mu_x}, \frac{88}{\mu_y}\right \} \fV_0},
\end{split}
\end{align*}
where $c = 1 - {\mu_x \tau_x}/{2}$. We know that when $n \rightarrow \infty$, we have $(x_n,y_n) \rightarrow (x^{\ast}, y^{\ast})$ where $(x^{\ast},y^{\ast})$ is a saddle point, Taking square on both sides completes our proof.

\end{proof}

\end{cor}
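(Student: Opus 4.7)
The plan is to combine the linear decay of the Lyapunov function $\fV_k$ established in Theorem~\ref{thm: GDA} with a Cauchy-sequence argument that both identifies a limit saddle point and controls the distance to it. The overall strategy: bound the single-step movement $\Vert x_{k+1}-x_k\Vert^2 + \Vert y_{k+1}-y_k\Vert^2$ by $\fV_k$ up to a multiplicative constant, exploit the geometric decay of $\fV_k$ to make the iterates Cauchy, and then let $n\to\infty$ to define the reference saddle point $(x^{\ast},y^{\ast})$ that appears in the statement.

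For the per-step bound, I would write
\begin{align*}
\Vert x_{k+1}-x_k\Vert^2 + \Vert y_{k+1}-y_k\Vert^2 = \tau_x^2\Vert\nabla_x f(x_k,y_k)\Vert^2 + \tau_y^2\Vert\nabla_y f(x_k,y_k)\Vert^2,
\end{align*}
then use the $L$-smoothness identity $\nabla_y f(x_k, y^{\ast}(x_k)) = 0$ to rewrite the $y$-term, pass to $\Vert y_k - y^{\ast}(x_k)\Vert^2$ via the fact that $-f(x_k,\cdot)$ is $\mu_y$-PL (yielding quadratic growth, Lemma~\ref{lem: quadratic-growth}), and decompose $\Vert\nabla_x f(x_k,y_k)\Vert^2 \le 2\Vert\nabla g(x_k)\Vert^2 + 2\Vert\nabla g(x_k)-\nabla_x f(x_k,y_k)\Vert^2$. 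Using Lemma~\ref{lem: bound-Bk} and Lemma~\ref{lem: g(x)-PL} together with the quadratic-growth consequence $\Vert x_k-x^{\ast}\Vert^2 \le (2/\mu_x)\fA_k$, every remaining term is controlled by either $\fA_k$ or $\fB_k$. Because $\lambda \tau_x/\tau_y = 1/22$, both coefficients can be absorbed into $\max\{4/\mu_x, 88/\mu_y\}\cdot \fV_k$.

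With the per-step bound in hand, Theorem~\ref{thm: GDA} gives $\fV_k \le c^k \fV_0$ where $c = 1 - \mu_x\tau_x/2$, so taking square roots and summing a geometric series yields
\begin{align*}
\Vert x_n - x_k\Vert + \Vert y_n - y_k\Vert \le \frac{c^{k/2}}{1-\sqrt{c}}\sqrt{2\max\{4/\mu_x, 88/\mu_y\}\,\fV_0}
\end{align*}
for all $n\ge k$. This shows $(x_k,y_k)$ is Cauchy and hence converges to some $(x^{\ast},y^{\ast})$; since $\fV_k\to 0$, we have $g(x^{\ast}) = g(x^\star)$ and $g(x^{\ast})=f(x^{\ast},y^{\ast})$, so $(x^{\ast},y^{\ast})$ is a saddle point by Lemma~\ref{lem: three-points-equal}. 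Setting $k$ to the running index in the Cauchy inequality and letting $n\to\infty$ yields the claimed bound~(\ref{eq: 11.1}).

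For the SFO count, one full GDA step uses $\mathcal{O}(n)$ oracle calls, and forcing the right-hand side of~(\ref{eq: 11.1}) below $\epsilon$ requires $k = \mathcal{O}((1/(\mu_x\tau_x))\log(\fV_0/((1-\sqrt{c})^{2}\epsilon)))$. Substituting $\tau_x = \Theta(1/(\kappa_y^2 L))$ gives $1/(\mu_x\tau_x) = \Theta(\kappa_x\kappa_y^2)$, so the total cost is $\mathcal{O}(n\kappa_x\kappa_y^2 \log(\kappa_x\kappa_y/\epsilon))$ after folding the $(1-\sqrt c)^{-2}$ factor and the scaling constants into the logarithm. The main obstacle I anticipate is the per-step bound: squeezing the cross-terms into a clean multiple of $\fV_k$ while simultaneously keeping the coefficients small enough that the $\lambda \tau_x/\tau_y = 1/22$ weighting absorbs them without a separate geometric argument; everything else is a direct consequence of PL plus a standard Cauchy/geometric series wrap-up.
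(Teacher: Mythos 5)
Your proposal follows essentially the same route as the paper's proof: the identical per-step bound $\Vert x_{k+1}-x_k\Vert^2+\Vert y_{k+1}-y_k\Vert^2\le\max\{4/\mu_x,88/\mu_y\}\,\fV_k$ obtained via $\nabla_y f(x_k,y^{\ast}(x_k))=0$, quadratic growth, and the gradient decomposition, followed by the same geometric-series Cauchy argument and the same SFO accounting. The only (welcome) difference is that you spell out why the limit point is a saddle point via $\fV_k\to 0$ and Lemma \ref{lem: three-points-equal}, which the paper leaves implicit.
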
 

\subsection{Convergence under One-Sided PL condition}

When $f$ is nonconvex in $x$, we have the following theorem for GDA.

\begin{thm} \label{thm: GDA-one-side}

Suppose function $f(x,y)$ satisfies $L$-smooth, $\mu_y$-PL in $y$. Let $\tau_y = {1}/{L}$, $\lambda = {4 L^2}/{\mu_y^2}$ and $\tau_x = {\tau_y}/{(18 \lambda)}$, then the sequence $\{ (x_k,y_k) \}_{k=0}^{K-1}$ generated by Algorithm \ref{alg: GDA} satisfies,~
\begin{align*}
    \frac{1}{K} \sum_{k=0}^{K-1} \Vert \nabla g(x_k) \Vert^2 \le \frac{288 L^3}{K \mu_y^2} \mathcal{V}_0.
\end{align*}
Furthermore, if we choose the output $(\hat x,\hat y)$ uniformly from $\{ (x_k,y_k) \}_{k=0}^{K-1}$, then we can get $\Vert \nabla g(\hat x) \Vert \le \epsilon$ with no more than $\mathcal{O}({n \kappa_y^2 L}/{\epsilon^2} )$ first-order oracle calls. 
\begin{proof}
Using equation (\ref{eq: 2-2}) and Lemma \ref{lem: bound-Bk} that $ \Vert \nabla g(x_k) - \nabla_x f(x_k,y_k) \Vert^2 \le {2L^2 \fB_k} /{\mu_y} $,we have
\begin{align} \label{eq: 10}
    \fA_{k+1} 
&\le \fA_k - \frac{\tau_x}{2} \Vert \nabla g(x_k)\Vert^2 +\frac{\tau_x L^2}{\mu_y} \mathcal{B}_k.
\end{align}
Further, using equation (\ref{eq: 2-6}), we have 
\begin{align} \label{eq: 11}
\begin{split}
    \fB_{k+1 } &\le \mathcal{B}_k + \frac{9\tau_x}{2} \Vert \nabla g(x_k) \Vert^2 + \frac{11 \tau_x}{2} \Vert \nabla g(x_k) - \nabla_x f(x_k,y_k) \Vert^2 - \frac{\tau_y}{4} \Vert \nabla_y f(x_k,y_k) \Vert^2 \\
    &\le \fB_k + \frac{9\tau_x}{2} \Vert \nabla g(x_k) \Vert^2 + \frac{11 \tau_x L^2}{\mu_y} \fB_k - \frac{\mu_y \tau_y}{2} \fB_k \\
    &\le (1- \frac{\mu_y \tau_y}{4}) \fB_k + \frac{9\tau_x}{2} \Vert \nabla g(x_k) \Vert^2, 
\end{split}
\end{align}
where we use Lemma \ref{lem: bound-Bk} and PL condition in $y$ in the first inequality and ${11 \tau_x L^2}/{\mu_y} \le {\mu_y \tau_y}/{4}$ by the choices of $\tau_x,\tau_y$. Thus,
\begin{align*}
\fB_k \le \left (1 - \frac{\mu_y \tau_y}{4} \right)^k \fB_0 + \frac{9 \tau_x}{2} \sum_{i=0}^{k-1} \left (1 - \frac{\mu_y \tau_y}{4} \right)^{k-i-1} \Vert \nabla g(x_i) \Vert^2.   
\end{align*}
Plugging  into (\ref{eq: 10}), 
{\small
\begin{align*}
    \mathcal{A}_{k+1} &\le \mathcal{A}_k - \frac{\tau_x}{2} \Vert \nabla g(x_k) \Vert^2 + \frac{\tau_x L^2}{\mu_y}\left (1 - \frac{ \mu_y \tau_y}{4} \right)^{k} \mathcal{B}_0  +\frac{9\tau_x^2 L^2}{2\mu_y} \sum_{i=0}^{k-1} \left (1 - \frac{ \mu_y \tau_y}{4} \right)^{k-i-1} \Vert \nabla g(x_i) \Vert^2.
\end{align*}}
Telescoping and noticing that ${18\tau_x^2 L^2}/{\tau_y \mu_y^2} \le {\tau_x}/{4}$ and $\lambda =  {4L^2}/{\mu_y^2}$, we have 
\begin{align*}
    \fA_{K+1} &\le \fA_0 - \frac{\tau_x}{2} \sum_{k=0}^{K} \Vert \nabla g(x_k) \Vert^2 + \frac{\tau_x L^2}{\mu_y} \sum_{k=0}^{K-1} \left (1 - \frac{\mu_y \tau_y}{4}\right )^k \fB_0 \\
    &\quad \quad \quad + \frac{9\tau_x^2 L^2}{2\mu_y} \sum_{k=1}^K \sum_{i=0}^{k-1} \left (1 - \frac{\mu_y \tau_y}{4}\right )^{k-i-1} \Vert \nabla g(x_i) \Vert^2 \\
    &= \fA_0 - \frac{\tau_x}{2} \sum_{k=0}^{K} \Vert \nabla g(x_k) \Vert^2 + \frac{\tau_x L^2}{\mu_y} \sum_{k=0}^{K-1} \left (1 - \frac{\mu_y \tau_y}{4} \right)^k \fB_0 \\
    &\quad \quad \quad + \frac{9\tau_x^2 L^2}{2\mu_y} \sum_{i=0}^{K-1} \sum_{k=i+1}^{K} \left (1 - \frac{\mu_y \tau_y}{4} \right)^{k-i-1} \Vert \nabla g(x_i) \Vert^2 \\
    &\le \fA_0 - \frac{\tau_x}{2} \sum_{k=0}^{K} \Vert \nabla g(x_k) \Vert^2 + \frac{\tau_x L^2}{\mu_y} \sum_{k=0}^{K-1} \left (1 - \frac{\mu_y \tau_y}{4} \right )^k \fB_0 + \frac{18 \tau_x^2 L^2}{\tau_y \mu_y^2} \sum_{i=0}^{K-1} \Vert \nabla g(x_i) \Vert^2 \\
    &\le \fA_0 - \frac{\tau_x}{2} \sum_{k=0}^{K} \Vert \nabla g(x_k) \Vert^2 + \frac{\tau_x L^2}{\mu_y} \sum_{k=0}^{K-1} \left (1 - \frac{\mu_y \tau_y}{4} \right)^k \fB_0 + \frac{18 \tau_x^2 L^2}{\tau_y \mu_y^2} \sum_{k=0}^{K} \Vert \nabla g(x_k) \Vert^2 \\
    &\le \fA_0 - \frac{\tau_x}{4} \sum_{k=0}^K \Vert \nabla g(x_k) \Vert^2 +  \frac{4 \tau_x L^2}{\tau_y \mu_y^2} \fB_0 \\
    &= \fV_0 - \frac{\tau_x}{4} \sum_{k=0}^K \Vert \nabla g(x_k) \Vert^2.
\end{align*}
Rearranging and noticing that $\fA_{K+1} \ge  0 $, we can see that 
\begin{align*}
    \frac{1}{K+1} \sum_{k=0}^{K} \Vert \nabla g(x_k) \Vert^2 \le \frac{4 \fV_0}{(K+1) \tau_x}, 
\end{align*}
which is equivalent to the desired inequality.
\end{proof}

\end{thm}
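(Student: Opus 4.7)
The plan is to mimic the Lyapunov strategy used for the two-sided case in Theorem~\ref{thm: GDA}, but since we now have no PL condition in $x$, we should not expect linear convergence; instead we target an $O(1/K)$ ergodic bound on $\|\nabla g(x_k)\|^2$. I would use the same potential $\mathcal{V}_k = \mathcal{A}_k + (\lambda\tau_x/\tau_y)\mathcal{B}_k$ with $\mathcal{A}_k = g(x_k)-g^*$ and $\mathcal{B}_k = g(x_k)-f(x_k,y_k)$, and derive two inequalities: one controlling $\mathcal{A}_{k+1}-\mathcal{A}_k$ via smoothness of $g$ (Lemma~\ref{lem: g(x)-L-smooth}) together with Lemma~\ref{lem: bound-Bk} to handle $\|\nabla g(x_k)-\nabla_x f(x_k,y_k)\|^2 \le 2L^2 \mathcal{B}_k/\mu_y$; and one for $\mathcal{B}_{k+1}$ obtained by splitting $f(x_k,y_k)-f(x_{k+1},y_{k+1})$ across the $x$ and $y$ updates exactly as in equations (\ref{eq: 2-3})--(\ref{eq: 2-5}) of Theorem~\ref{thm: GDA}.

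The first step should yield something like $\mathcal{A}_{k+1}\le \mathcal{A}_k - \tfrac{\tau_x}{2}\|\nabla g(x_k)\|^2 + \tfrac{\tau_x L^2}{\mu_y}\mathcal{B}_k$. For the second, the cross term between $y_{k+1}-y_k$ and $x_{k+1}-x_k$ is absent here (we are using GDA rather than AGDA), but the $\mathcal{B}_k$ recursion should reduce to $\mathcal{B}_{k+1}\le (1-\tfrac{\mu_y\tau_y}{4})\mathcal{B}_k + \tfrac{9\tau_x}{2}\|\nabla g(x_k)\|^2$ after using PL-in-$y$ to get $\|\nabla_y f(x_k,y_k)\|^2 \ge 2\mu_y \mathcal{B}_k$ and absorbing the $\|\nabla_x f\|^2$ term through the parameter choice $\lambda=4L^2/\mu_y^2$ and $\tau_x=\tau_y/(18\lambda)$. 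Unrolling the recursion for $\mathcal{B}_k$ gives $\mathcal{B}_k \le (1-\mu_y\tau_y/4)^k \mathcal{B}_0 + \tfrac{9\tau_x}{2}\sum_{i<k}(1-\mu_y\tau_y/4)^{k-i-1}\|\nabla g(x_i)\|^2$.

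Substituting this into the $\mathcal{A}$ recursion and telescoping from $0$ to $K$ produces a double sum $\sum_k\sum_{i<k}(1-\mu_y\tau_y/4)^{k-i-1}\|\nabla g(x_i)\|^2$ which I would swap order to obtain a single sum bounded by $\tfrac{4}{\mu_y\tau_y}\sum_i \|\nabla g(x_i)\|^2$. With the parameter choices the coefficient on this sum becomes $\tfrac{18\tau_x^2 L^2}{\tau_y\mu_y^2}\le \tfrac{\tau_x}{4}$, so it can be absorbed into the negative $-\tfrac{\tau_x}{2}\|\nabla g(x_k)\|^2$ on the left. Using $\mathcal{A}_{K+1}\ge 0$ and bounding the geometric series $\sum_k (1-\mu_y\tau_y/4)^k \le 4/(\mu_y\tau_y)$ on the $\mathcal{B}_0$ term, I arrive at $\tfrac{\tau_x}{4}\sum_{k=0}^{K}\|\nabla g(x_k)\|^2 \le \mathcal{V}_0$, which rearranged gives the stated rate. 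For the complexity statement, picking the uniform random output $\hat x$ turns the average into $\mathbb{E}\|\nabla g(\hat x)\|^2$; setting $K = \Theta(L^3/(\mu_y^2\epsilon^2))$ and noting each full-batch GDA iteration costs $n$ SFO calls yields the $\mathcal{O}(n\kappa_y^2 L/\epsilon^2)$ bound.

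The main obstacle is the coupling between $\mathcal{A}_k$ and $\mathcal{B}_k$ without PL-in-$x$: in the two-sided case PL-in-$x$ gave linear contraction of $\mathcal{A}_k$ directly, but here I must unroll $\mathcal{B}_k$ first and then carefully control the resulting double sum by swapping summation order and exploiting a geometric-series bound, verifying that the parameters are tuned so the cross term can be hidden inside $-\tfrac{\tau_x}{2}\|\nabla g(x_k)\|^2$. The precise verification that $18\tau_x^2 L^2/(\tau_y\mu_y^2)\le \tau_x/4$ (and the analogous $11\tau_x L^2/\mu_y \le \mu_y\tau_y/4$ used to derive the $\mathcal{B}$ recursion) is a purely arithmetic check given $\tau_y=1/L$, $\lambda=4L^2/\mu_y^2$, $\tau_x=\tau_y/(18\lambda)$, and is the driver behind the $288$ constant in the final bound.
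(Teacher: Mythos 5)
Your proposal follows the paper's proof essentially step for step: the same $\mathcal{A}_k$/$\mathcal{B}_k$ decomposition, the same recursion $\mathcal{B}_{k+1}\le(1-\mu_y\tau_y/4)\mathcal{B}_k+\tfrac{9\tau_x}{2}\Vert\nabla g(x_k)\Vert^2$, the same unrolling, summation swap, geometric-series bound, and the same arithmetic checks ($11\tau_xL^2/\mu_y\le\mu_y\tau_y/4$ and $18\tau_x^2L^2/(\tau_y\mu_y^2)\le\tau_x/4$) to absorb the cross term. The argument is correct and matches the paper's route.
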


\section{Convergence of GDA with SVRG Gradient Estimators} \label{apx: SVRG-GDA}

In this section, we show the convergence rate of GDA with SVRG gradient estimators (Algorithm \ref{alg: SVRG-GDA}) can be $\fO( (n+ n^{2/3} \kappa_x \kappa_y^2 ) \log (1/\epsilon))$, improving the result of $\fO( (n+ n^{2/3} \max\{\kappa_x^3, \kappa_y^3 \} ) \log (1/\epsilon))$  by \citet{yang2020global} with SVRG-AGDA. 


\begin{algorithm*}[t] 
\caption{SVRG-GDA $(f, (x_0,y_0), T,S,M,B, \tau_x, \tau_y)$} 
\begin{algorithmic}[1] \label{alg: SVRG-GDA}
\STATE $\bar x_0 = x_0, \bar y_0 = y_0$ \\[0.15cm]
\STATE \textbf{for} $t = 0,1,\dots, T-1$ \textbf{do} \\[0.15cm]
\STATE \quad \textbf{for} $s = 0, 1, \dots, S-1$ \textbf{do}\\[0.15cm]
\STATE \quad \quad $x_{s,0} = \bar x_s, y_{s,0} = \bar y_s$ \\[0.15cm]
\STATE \quad \quad compute $\nabla_x f(\bar x_s,\bar y_s) = \dfrac{1}{n} \sum_{i=1}^n \nabla_x f_i(\bar x_s,\bar y_s) $ \\[0.15cm]
\STATE \quad \quad compute $\nabla_y f(\bar x_s,\bar y_s) = \dfrac{1}{n} \sum_{i=1}^n \nabla_y f_i(\bar x_s,\bar y_s) $ \\[0.15cm]
\STATE \quad \quad \textbf{for } $k= 0,1, \dots, M-1$ \\[0.15cm]
\STATE \quad \quad \quad Draw samples $S_x,S_y$ independently with both size $B$. \\[0.15cm]
\STATE \quad \quad \quad  $G_x(x_{s,k},y_{s,k}) =\dfrac{1}{B} \sum_{i \in S_x}[\nabla_x f_{i} (x_{s,k},y_{s,k}) - \nabla_x f_{i} (\bar x_s, \bar y_s) + \nabla_x f(\bar x_s,\bar y_s)]$ \\[0.15cm]
\STATE \quad \quad \quad $G_y(x_{s,k},y_{s,k}) = \dfrac{1}{B} \sum_{i\in S_y}[\nabla_y f_{i} (x_{s,k},y_{s,k}) - \nabla_x f_{i} (\bar x_s, \bar y_s) + \nabla_x f(\bar x_s,\bar y_s)]$ \\[0.15cm]
\STATE \quad \quad \quad $x_{s,k+1} = x_{s,k} - \tau_x G_x(x_{s,k},y_{s,k})$ \\[0.15cm]
\STATE \quad \quad \quad $y_{s,k+1} = y_{s,k} + \tau_y G_y(x_{s,k},y_{s,k})$ \\[0.15cm]
\STATE \quad \quad \textbf{end for} \\[0.15cm]
\STATE \quad \quad $\bar x_{s+1} = x_{s,M}, \bar y_{s+1} = y_{s,M}$ \\[0.15cm]
\STATE \quad \textbf{end for} \\[0.15cm]
\STATE \quad Choose $(x_t,y_t)$ from $\{ \{ (x_{s,k}, y_{s,k} )\}_{k=0}^{M-1} \}_{s=0}^{S-1}$ uniformly at random. \\ [0.15cm]
\STATE \quad $ \bar x_0 = x_t, \bar y_0 = y_t$\\ [0.15cm]
\STATE \textbf{end for} \\[0.15cm]
\STATE \textbf{return} $(x_T,y_T)$
\end{algorithmic}
\end{algorithm*}

For the innermost loop about subscript $k$ when $t$ and $s$ are both fixed,  we define the Lyapunov function:
\begin{align*}
    \mathcal{V}_{s,k} &= \mathcal{A}_{s,k} + \frac{\lambda \tau_x}{ \tau_y} \mathcal{B}_{s,k} + c_{s,k} \Vert x_{s,k} - \bar x_s \Vert^2 + d_{s,k}  \Vert y_k - \bar y_s \Vert^2,
\end{align*}
where $\fA_{s,k} = g(x_{s,k}) - g(x^{\ast}) $ and $\fB_{s,k} = g(x_{s,k}) - f(x_{s,k},y_{s,k})$ and $c_{s,k},d_{s,k}$ will be defined recursively with $c_{s,M} = d_{s,M} = 0$ in our proof. Then we can have the following lemma.

\begin{lem} \label{lem: SVRG-GDA-V_k}

Under Assumption \ref{asm: one-PL} and \ref{asm: L-smooth}, if we let $\tau_y = {\nu}/{(L n^{\alpha})}$, $\lambda  = {14L^2}/{\mu_y^2}$ and $\tau_x = {\tau_y}/{(22 \lambda)}$, where  $\nu = 1/(176(\rm{e}-1)), 0< \alpha \le 1$; let $B = 1, M = \lfloor {n^{3 \alpha/2}}/{(2\nu)} \rfloor$. Then for Algorithm \ref{alg: SVRG-GDA}, the following statement holds true:
\begin{align*}
     \BE[\fV_{s,k+1}] &\le \fV_{s,k} - \frac{\tau_x}{8} \Vert \nabla g(x_{s,k}) \Vert^2 - \frac{\lambda \tau_x}{16} \Vert \nabla_y f(x_{s,k},y_{s,k}) \Vert^2,
\end{align*}
Above, the definitions of $c_{s,k},d_{s,k}$ is given recursively with $c_{s,M} = d_{s,M} = 0$ as:
\begin{align*}
c_{s,k} &= c_{s,k+1}(1+ \tau_x \gamma_1) + \left(c_{s,k+1}\tau_x^2+ \frac{3\tau_x^2 L^2}{ \mu_y}\right ) L^2 + \left (d_{s,k+1}\tau_y^2 +\frac{\lambda \tau_x\tau_y L}{2}\right) L^2, \\
d_{s,k} &= d_{s,k+1}(1+\tau_y \gamma_2) + \left (c_{s,k+1}\tau_x^2+ \frac{3\tau_x^2 L^2}{\mu_y} \right) L^2 + \left (d_{s,k+1}\tau_y^2 +\frac{\lambda \tau_x\tau_y L}{2} \right ) L^2.
\end{align*}

\begin{proof}

Since $s$ is fixed in the lemma, we omit  subscripts of $s$ in the following proofs, then the Lyapunov function can be written as:
\begin{align*}
    \mathcal{V}_{k} &= \mathcal{A}_k + \frac{\lambda \tau_x}{ \tau_y} \mathcal{B}_k + c_{k} \Vert x_k - \bar x \Vert^2 + d_k  \Vert y_k - \bar y \Vert^2.
\end{align*}
Before the formal proof, we present some standard properties of variance reduction. We denote the stochastic gradients as:
\begin{align*}
    G_x(x_k,y_k) &= \frac{1}{B} \sum_{i \in S_x} \big(\nabla_x f_{i} (x_{k},y_{k}) - \nabla_x f_{i} (\bar x, \bar y) + \nabla_x f(\bar x,\bar y)\big)
\end{align*}
and
\begin{align*}    
    G_y(x_k,y_k) &= \frac{1}{B} \sum_{i \in S_y} \big(\nabla_y f_{i} (x_{k},y_{k}) - \nabla_y f_{i} (\bar x, \bar y) + \nabla_y f(\bar x,\bar y)\big).
\end{align*}
Then we know that the stochastic gradients satisfy unbiasedness that 
\begin{align*}
    \BE[G_x(x_k,y_k)] = \nabla_x f(x_k,y_k) \quad\text{and}\quad
    \BE[G_y(x_k,y_k)] = \nabla_y f(x_k,y_k).
\end{align*}
And we  can bound the variance of the stochastic gradients as follows:
\begin{align} \label{eq: bound-var-x}
\begin{split}
&\quad \BE\Vert G_x(x_k,y_k) - \nabla_x f(x_k,y_k) \Vert^2 \\
&= \BE\Vert \nabla_x f_{i} (x_{k},y_{k}) - \nabla_x f_{i} (\bar x, \bar y) + \nabla_x f(\bar x,\bar y) - \nabla_x f(x_k,y_k) \Vert^2 \\
&\le \BE\Vert \nabla_x f_{i} (x_{k},y_{k}) - \nabla_x f_{i} (\bar x, \bar y) \Vert^2 \\
&\le  L^2 \BE\Vert x_{k} - \bar x \Vert^2] +L^2 \BE [\Vert y_{k} - \bar y \Vert^2. 
\end{split}
\end{align}
Similarly, we have
\begin{align} \label{eq: bound-var-y}
    \BE\Vert G_y(x_k,y_k) - \nabla_y f(x_k,y_k) \Vert^2 
    &\le  L^2 \BE\Vert x_{k} - \bar x \Vert^2 +L^2 \BE \Vert y_{k} - \bar y \Vert^2.
\end{align}

Equipped with the above properties of SVRG, now we can begin our proof of Lemma \ref{lem: SVRG-GDA-V_k}. 

Since we know that $g$ is $({2L^2}/{\mu_y})$-smooth by Lemma \ref{lem: g(x)-L-smooth} and  $\tau_x \le {\mu_y}/{2 L^2}$, we have
\begin{align} \label{eq: bound-SVRG-Ak}
\begin{split}
   \BE[g(x_{k+1})] 
    & \le \BE\left[g(x_k )  + \nabla g(x_k)^\top(x_{k+1} - x_k) + \frac{2L^2}{\mu_y} \Vert x_{k+1} - x_k \Vert^2\right] \\
    &\le \BE\left[g(x_k)  - \tau_x\nabla g(x_k)^\top G_x(x_k,y_k) + \frac{\tau_x^2 L^2}{\mu_y} \Vert G_x(x_k,y_k) \Vert^2\right] \\
    &\le \BE\left[g(x_k)  - \tau_x\nabla g(x_k)^\top \nabla_x f(x_k,y_k) + \frac{\tau_x}{2} \Vert \nabla_x f(x_k,y_k) \Vert^2 \right] \\
    &\quad +\BE\left[ \frac{\tau_x^2 L^2}{\mu_y} \Vert G_x(x_k,y_k) - \nabla_x f(x_k,y_k) \Vert^2 \right] \\
    &= \BE\left[g(x_k)  - \frac{\tau_x }{2} \Vert \nabla g(x_k)\Vert^2 +\frac{\tau_x}{2} \Vert \nabla  g(x_k ) - \nabla_x f(x_k,y_k) \Vert^2 \right] \\
    &\quad + \BE \left[\frac{\tau_x^2 L^2}{\mu_y} \Vert G_x(x_k,y_k) - \nabla_x f(x_k,y_k) \Vert^2 \right] ,
\end{split}
\end{align}
where we use $ \tau_x^2 L^2 \le \mu_y$ in the third inequality. Also, we have 
\begin{align*}
\mathbb{E}[ f(x_k,y_k) ]&\le \mathbb{E}\left[ f(x_{k+1},y_{k} ) - \nabla_x f(x_k,y_k)^\top(x_{k+1}-x_k) + \frac{L}{2} \Vert x_{k+1} - x_k \Vert^2\right] \\
&= \mathbb{E} \left[ f(x_{k+1},y_k) + \tau_x\nabla_x f(x_k,y_k)^\top G_x(x_k,y_k) + \frac{\tau_x^2L}{2} \Vert G_x(x_k,y_k ) \Vert^2\right] \\
&= \mathbb{E}\left[ f(x_{k+1},y_k)  + \tau_x  \Vert \nabla_x f(x_k,y_k) \Vert^2 + \frac{\tau_x^2L}{2} \Vert \nabla_x f(x_k,y_k) \Vert^2 \right]\\
&\quad + \BE \left[ \frac{\tau_x^2L}{2} \Vert G_x(x_k,y_k) - \nabla_x f(x_k,y_k) \Vert^2\right] \\
&\le \mathbb{E} \left[ f(x_{k+1},y_k) + \frac{3\tau_x}{2} \Vert \nabla_x f(x_k,y_k) \Vert^2 + \frac{\tau_x^2L}{2} \Vert G_x(x_k,y_k) - \nabla_x f(x_k,y_k) \Vert^2 \right],
\end{align*}
where we use the quadratic upper bound implied by $L$-smoothness in the first inequality and $\tau_y \le 1/L$ in the second one. Similarly,  
{\small \begin{align*}
\mathbb{E}[ f(x_{k+1},y_k)] &\le \mathbb{E}\left[ f(x_{k+1},y_{k+1}) - \nabla_y f(x_{k+1},y_k)^\top(y_{k+1} - y_k) + \frac{L}{2} \Vert y_{k+1} - y_k \Vert^2\right] \\
&=\mathbb{E}\left[ f(x_{k+1},y_{k+1} ) - \tau_y \nabla_y f(x_{k+1},y_k)^\top G_y(x_k,y_k) + \frac{\tau_y^2L}{2} \Vert G_y (x_k,y_k) \Vert^2\right] \\
&\le \mathbb{E}\left[ f(x_{k+1},y_{k+1} ) - \tau_y  \nabla_y f(x_{k+1},y_k)^\top  \nabla_y f(x_k,y_k) + \frac{\tau_y}{2} \Vert \nabla_y f(x_k,y_k) \Vert^2 \right]\\
&\quad + \BE \left[ \frac{\tau_y^2L}{2} \Vert G_y(x_k,y_k) - \nabla_y f(x_k,y_k) \Vert^2\right] \\
&= \mathbb{E}\left[ f(x_{k+1},y_{k+1} ) -\frac{\tau_y}{2} \Vert \nabla_y f(x_{k+1},y_{k}) \Vert^2 \right]\\
&\quad + \BE \left[\frac{\tau_y}{2} \Vert \nabla_y f(x_{k+1},y_k) - \nabla_y f(x_k,y_k) \Vert^2 + \frac{\tau_y^2L}{2} \Vert G_y(x_k,y_k) - \nabla_y f(x_k,y_k) \Vert^2\right]\\
&\le \mathbb{E}\left[ f(x_{k+1},y_{k+1} ) -\frac{\tau_y}{4} \Vert \nabla_y f(x_{k+1},y_{k}) \Vert^2 \right] \\
&\quad + \BE \left[\tau_y \Vert \nabla_y f(x_{k+1},y_k) - \nabla_y f(x_k,y_k) \Vert^2 + \frac{\tau_y^2L}{2} \Vert G_y(x_k,y_k) - \nabla_y f(x_k,y_k) \Vert^2\right]\\
&\le \mathbb{E}\left[ f(x_{k+1},y_{k+1} ) -\frac{\tau_y}{4} \Vert \nabla_y f(x_{k+1},y_{k}) \Vert^2 \right] \\
&\quad + \BE \left[\tau_y \tau_x^2 L^2\Vert G_x(x_k,y_k) \Vert^2 + \frac{\tau_y^2L}{2} \Vert G_y(x_k,y_k) - \nabla_y f(x_k,y_k) \Vert^2\right]\\
&\le\mathbb{E}\left[ f(x_{k+1},y_{k+1} ) -\frac{\tau_y}{4} \Vert \nabla_y f(x_{k+1},y_{k}) \Vert^2 + \tau_x \Vert \nabla_x f(x_k,y_k) \Vert^2 \right]\\
&\quad + \BE \left[\tau_x^2  L \Vert G_x(x_k,y_k) - \nabla_x f(x_k,y_k) \Vert^2 + \frac{\tau_y^2L}{2} \Vert G_y(x_k,y_k) - \nabla_y f(x_k,y_k) \Vert^2\right].
\end{align*}}
Above, the first and fourth inequalities are both due to $L$-smoothness; the second one follows from $\tau_y \le 1/L$; the third one uses the fact that $ -\BE [\Vert a - b \Vert^2] \le -\frac{1}{2} \BE[\Vert a \Vert^2] + \BE [\Vert b \Vert^2]$; the last one relies on $ \BE[ \Vert G_x(x_k,y_k) \Vert^2] = \BE[ \Vert \nabla_x f(x_k,y_k) \Vert^2 + \Vert G_x(x_k,y_k) - \nabla_x f(x_k,y_k) \Vert^2  ] $ and the choices of $\tau_x,\tau_y$.
Summing up the above two inequalities, we obtain 
{\small\begin{align*}
\mathbb{E}[f(x_k,y_k)] &\le \mathbb{E}\left[f(x_{k+1},y_{k+1}) + \frac{5\tau_x}{2} \Vert \nabla_x f(x_k,y_k) \Vert^2 + \frac{3\tau_x^2L}{2} \Vert G_x(x_k,y_k) - \nabla_x f(x_k,y_k) \Vert^2 \right] \\
&\quad \BE \left[- \frac{\tau_y}{4} \Vert \nabla_y f(x_k,y_k) \Vert^2 + \frac{\tau_y^2L}{2} \Vert G_y(x_k,y_k) - \nabla_y f(x_k,y_k) \Vert^2\right].
\end{align*}}
Combing with inequality (\ref{eq: bound-SVRG-Ak}), we can see that
\begin{align} \label{eq: bound-SVRG-Bk}
\begin{split}
    \BE[\fB_{k+1}] &\le \BE\left[\fB_k - \frac{\tau_x}{2} \Vert \nabla g(x_k) \Vert^2 + \frac{\tau_x}{2} \Vert \nabla g(x_k) - \nabla_x f(x_k,y_k) \Vert^2\right ] \\
&\quad + \BE\left[  - \frac{\tau_y}{4} \Vert \nabla_y f(x_k,y_k) \Vert^2 + \frac{5 \tau_x}{2} \Vert \nabla_x f(x_k,y_k) \Vert^2\right] \\
&\quad + \BE\left[ \frac{\tau_y^2L}{2} \Vert G_y(x_k,y_k) - \nabla_y f(x_k,y_k) \Vert^2 + \frac{5\tau_x^2 L^2}{2\mu_y} \Vert G_x(x_k,y_k) - \nabla_x f(x_k,y_k) \Vert^2 \right] \\
&\le \BE \left[ \fB_k + \frac{9 \tau_x}{2} \Vert \nabla g(x_k) \Vert^2 + \frac{11 \tau_x}{2} \Vert \nabla g(x_k) - \nabla_x f(x_k,y_k) \Vert^2 - \frac{\tau_y}{4} \Vert \nabla_y f(x_k,y_k) \Vert^2 \right ] \\
&\quad + \BE\left[ \frac{\tau_y^2L}{2} \Vert G_y(x_k,y_k) - \nabla_y f(x_k,y_k) \Vert^2 + \frac{5\tau_x^2 L^2}{2\mu_y} \Vert G_x(x_k,y_k) - \nabla_x f(x_k,y_k) \Vert^2 \right],
\end{split}
\end{align}
where we use $\BE\Vert \nabla_x f(x_k,y_k) \Vert^2 \le \BE\Vert \nabla g(x_k) \Vert^2 + \BE\Vert \nabla g(x_k) - \nabla_x f(x_k,y_k) \Vert^2$.
Using Young's inequality as equation (37) and (38) in \cite{yang2020global}, we have 
\begin{align*}
\begin{split}
    \BE\Vert x_{k+1} - \bar x \Vert^2 &\le \BE \left[ \tau_x^2  \Vert G_x(x_k,y_k) - \nabla_x f(x_k,y_k) \Vert^2\right] \\
    & \quad + \BE \left[ (1+ \tau_x \gamma_1) \Vert x_k - \bar x \Vert^2 + \left(\tau_x^2 + \frac{\tau_x}{\gamma_1} \right) \Vert \nabla_x f(x_k,y_k) \Vert^2 \right] ,\\
    \BE\Vert y_{k+1} - \bar y \Vert^2 &\le \BE \left[  \tau_y^2 \Vert G_y(x_k,y_k) - \nabla_y f(x_k,y_k) \Vert^2 \right] \\
    &\quad  + \BE \left[ (1+ \tau_y \gamma_2) \Vert y_k - \bar y \Vert^2 + \left(\tau_y^2 + \frac{\tau_y}{\gamma_2} \right) \Vert \nabla_y f(x_k,y_k) \Vert^2 \right],
\end{split}
\end{align*}
where $\gamma_1,\gamma_2$ are two positive constant in Young's inequality  which will be chosen later. 

Then, using equation (\ref{eq: bound-var-x}), (\ref{eq: bound-var-y}), (\ref{eq: bound-SVRG-Ak}), (\ref{eq: bound-SVRG-Bk}),   we have
\begin{align} \label{eq: 18}
\begin{split}
\BE[\mathcal{V}_{k+1}] &= \BE \left[ \fA_{k+1} + \frac{\lambda \tau_x}{\tau_y} \fB_{k+1} +c_{k+1} \Vert x_{k+1} -\bar x \Vert^2 + d_{k+1} \Vert y_{k+1} - \bar y \Vert^2 \right]\\
&\le \mathcal{A}_k + \frac{\lambda \tau_x}{\tau_y} \fB_k -  \left(1 - \frac{9 \lambda \tau_x}{\tau_y}\right ) \frac{\tau_x}{2}\Vert  \nabla g(x_k) \Vert^2 + \\
&\quad + \left (1 +\frac{11\lambda \tau_x}{\tau_y}\right )\frac{\tau_x}{2} \Vert \nabla_x f(x_k, y_k) - \nabla g(x_k) \Vert^2  \\
&\quad - \frac{\lambda \tau_x}{4} \Vert \nabla_y f(x_k,y_k) \Vert^2 + \frac{2 \tau_x^2 L^2}{\mu_y} \left(1 + \frac{5\lambda \tau_x}{4\tau_y} \right)\Vert G_x (x_k,y_k) - \nabla_x f(x_k,y_k) \Vert^2 \\
&\quad + \frac{\lambda \tau_x \tau_y L}{2} \Vert G_y(x_k,y_k) - \nabla_y f(x_k,y_k) \Vert^2\\
&\quad  + \BE [c_{k+1} \Vert x_{k+1} -\bar x \Vert^2 + d_{k+1} \Vert y_{k+1} - \bar y \Vert^2] \\
&= \fV_k - \left(1 - \frac{9 \lambda \tau_x}{\tau_y}\right ) \frac{\tau_x}{2}\Vert  \nabla g(x_k) \Vert^2 + \left (1 +\frac{11\lambda \tau_x}{\tau_y}\right )\frac{\tau_x}{2} \Vert \nabla_x f(x_k, y_k) - \nabla g(x_k) \Vert^2  \\
&\quad - \frac{\lambda \tau_x}{4} \Vert \nabla_y f(x_k, y_k) \Vert^2+ c_{k+1} \left (\tau_x^2 + \frac{\tau_x}{\gamma_1} \right ) \Vert \nabla_x f(x_k,y_k)  \Vert^2\\
&\quad + d_{k+1}\left ( \tau_y^2+\frac{\tau_y}{\gamma_2} \right ) \Vert \nabla_y f(x_k,y_k) \Vert^2  \\
&\le \fV_k - \left(1 - \frac{9 \lambda \tau_x}{\tau_y}\right ) \frac{\tau_x}{2}\Vert  \nabla g(x_k) \Vert^2 + \left (1 +\frac{11\lambda \tau_x}{\tau_y}\right )\frac{\tau_x}{2} \Vert \nabla_x f(x_k, y_k) - \nabla g(x_k) \Vert^2  \\
&\quad - \frac{\lambda \tau_x}{4} \Vert \nabla_y f(x_k, y_k) \Vert^2+ d_{k+1}\left ( \tau_y^2+\frac{\tau_y}{\gamma_2} \right ) \Vert \nabla_y f(x_k,y_k) \Vert^2 \\
&\quad +  2c_{k+1} \left(\tau_x^2 + \frac{\tau_x}{\gamma_1}\right ) \Vert \nabla g(x_k) \Vert^2 \\
&\quad + 2c_{k+1} \left (\tau_x^2 + \frac{\tau_x}{\gamma_1} \right ) \Vert \nabla_x f(x_k,y_k) - \nabla g(x_k) \Vert^2 \\ 
&\le \mathcal{V}_k -   \frac{\tau_x}{4}\Vert  \nabla g(x_k) \Vert^2 + \frac{3\tau_x}{4} \Vert \nabla_x f(x_k, y_k) - \nabla g(x_k) \Vert^2 - \frac{\lambda \tau_x}{4} \Vert \nabla_y f(x_k, y_k) \Vert^2 \\
&\quad + d_{k+1}\left ( \tau_y^2+\frac{\tau_y}{\gamma_2} \right ) \Vert \nabla_y f(x_k,y_k) \Vert^2 +  2c_{k+1} \left (\tau_x^2 + \frac{\tau_x}{\gamma_1} \right ) \Vert \nabla g(x_k) \Vert^2 \\
&\quad + 2c_{k+1} \left (\tau_x^2 + \frac{\tau_x}{\gamma_1} \right ) \Vert \nabla_x f(x_k,y_k) - \nabla g(x_k) \Vert^2, 
\end{split}
\end{align}
where the second last inequality relies on 
\begin{align*} \Vert \nabla_x f(x_k,y_k) \Vert^2 \le 2 \Vert \nabla g(x_k) \Vert^2 + 2 \Vert \nabla g(x_k) - \nabla_x f(x_k,y_k) \Vert^2;
\end{align*}
in the last inequality we use ${11\lambda \tau_x}/{\tau_y } \le {1}/{2}$ by our choices of $\lambda,\tau_x,\tau_y$; the second equality is due to the definition of $c_{k+1}, d_{k+1}$.

Now we define $e_k = \max\{c_k,d_k \}$ and we bound $e_k$ by letting $ \gamma_1 = {\lambda L}/{n^{\alpha/2}}$ and $\gamma_2 = {L}/{n^{\alpha/2}}$. Then according to  the definition of $c_k,d_k$ given by our definition, we have
\begin{align*}
    e_k &\le (1 + \tau_y \gamma_2 + \tau_y^2 L^2) e_{k+1} + \frac{3 \tau_x^2 L^4}{\mu_y} + \frac{\lambda \tau_x \tau_y L^3}{2} \\ 
    & \le (1 + \tau_y \gamma_2 + \tau_y^2 L^2) e_{k+1} + \tau_y^2L^3 \\
    &= \left (1 + \frac{\nu}{n^{3\alpha/2}} + \frac{\nu^2}{n^{2 \alpha}} \right ) + \frac{L\nu^2 }{n^{2 \alpha}} \\
    &\le \left(1 + \frac{2 \nu}{n^{3\alpha/2}} \right) e_{k+1} + \frac{L\nu^2 }{n^{2 \alpha}},
\end{align*}
where we use  $\tau_x \le \tau_y$ and $\gamma_1 \tau_x \le \gamma_2 \tau_y$ in the first; the second inequality is due to $\tau_x^2 L / \mu_y \le \tau_y^2 /6$ and $\lambda \tau_x \le \tau_y /4$ ; we plug in $\tau_y = {\nu}/{L n^{\alpha}}$ in the third line;  we use $ \nu \le 1$ in the last inequality.

Since  $M = \lfloor {n^{3 \alpha/2}}/{(2\nu)} \rfloor$ and $c_M = d_M  =0$, if we define  $\theta = {2 \nu }/{n^{3 \alpha/2}}$, then
\begin{align} \label{eq: bound-e_k}
    e_0 & \le \frac{ L \nu^2}{n^{2 \alpha}}  \frac{(1 + \theta)^M - 1}{\theta} \le \frac{L \nu ({\rm e}-1)}{ 2n^{\alpha/2}}.
\end{align}
Since $e_{k+1} \le e_k$, we know that $e_k \le e_0$, then 
\begin{align} \label{eq: bound-re-d_k}
\begin{split}
    d_{k+1}\left ( \tau_y^2+\frac{\tau_y}{\gamma_2} \right) & \le e_0 \left (\tau_y + \frac{1}{\gamma_2}\right) \tau_y \\
    &\le \frac{L \nu (\rm{e}-1)}{ 2n^{\alpha/2}}  \left (\tau_y + \frac{1}{\gamma_2}\right) \tau_y \\
    &= \frac{ \nu(\rm{e}-1)}{2} \left ( \frac{\nu}{n^{3\alpha/2}} + 1 \right) \tau_y \\
    &\le \nu(\rm{e}-1) \tau_y,
\end{split}
\end{align}
where we use $d_{k+1} \le e_0$  in the first inequality and (\ref{eq: bound-e_k}) in the second one, and in the third line we plug in $\tau_y = {\nu}/{(L n^{\alpha})}$ and $\gamma_2 = {L}/{n^{\alpha/2}}$. The last inequality follows from ${\nu}/{n^{3\alpha/2}} \le 1$.

Similarly, note that $\gamma_1 \ge \gamma_2$  and $\tau_x \le \tau_y$ by the choices of $\tau_x,\tau_y$, then we have
\begin{align} \label{eq: bound-re-c_k}
c_{k+1}\left (\tau_x^2 + \frac{\tau_x}{\gamma_1} \right) \le e_0 \left (\tau_x + \frac{1}{\gamma_1}\right) \tau_x 
\le e_0\left (\tau_y + \frac{1}{\gamma_2} \right) \tau_x \le3\nu({ \rm e}-1)\tau_x. 
\end{align}
Plugging (\ref{eq: bound-re-d_k}) and (\ref{eq: bound-re-c_k}) into (\ref{eq: 18}), 
\begin{align*} 
\begin{split}
    \BE[\mathcal{V}_{k+1}] 
&\le \mathcal{V}_k -   \frac{\tau_x}{4}\Vert  \nabla g(x_k) \Vert^2 + \frac{3\tau_x}{4} \Vert \nabla_x f(x_k, y_k) - \nabla g(x_k) \Vert^2 - \frac{\lambda \tau_x}{4} \Vert \nabla_y f(x_k, y_k) \Vert^2 \\
&\quad + \nu({\rm e}-1) \tau_y \Vert \nabla_y f(x_k,y_k) \Vert^2 +  2 \nu(\rm{e}-1) \tau_x \Vert \nabla g(x_k) \Vert^2 \\
&\quad + 2 \nu({\rm e}-1) \tau_x \Vert \nabla_x f(x_k,y_k) - \nabla g(x_k) \Vert^2.
\end{split}
\end{align*}
If we let $\nu \le {1}/{(176{(\rm e}-1))}$, then we can verify that the following statements hold true:
\begin{align*}
     \nu({\rm e}-1) \tau_y &\le \frac{\lambda \tau_x}{8}, \\
    2 \nu({\rm e}-1) \tau_x &\le \frac{\tau_x}{8}.
\end{align*}
Thus,
\begin{align*}
    \BE [\fV_{k+1}] &\le \fV_k - \frac{\tau_x}{8} \Vert \nabla g(x_k) \Vert^2 + \frac{7 \tau_x}{8} \Vert \nabla_x f(x_k,y_k) - \nabla g(x_k) \Vert^2 - \frac{\lambda \tau_x}{8} \Vert \nabla_y f(x_k,y_k) \Vert^2.
\end{align*}
Using Lemma \ref{lem: new-bound-Bk} and $\mu_y$-PL condition in $y$ and plugging in $\lambda = {14 L^2}/{\mu_y^2}$ yields
\begin{align*}
    \frac{7 \tau_x}{8} \Vert \nabla_x f(x_k,y_k) - \nabla g(x_k) \Vert^2  \le \frac{\lambda \tau_x}{16} \Vert \nabla_y f(x_k,y_k)\Vert^2. 
\end{align*}
Thus, 
\begin{align*}
    \BE[\fV_{k+1}] &\le \fV_k - \frac{\tau_x}{8} \Vert \nabla g(x_k) \Vert^2 - \frac{\lambda \tau_x}{16} \Vert \nabla_y f(x_k,y_k) \Vert^2.
\end{align*}
\end{proof}
\end{lem}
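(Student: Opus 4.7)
The plan is to combine the GDA-style Lyapunov analysis from Theorem~\ref{thm: GDA} with a careful handling of the SVRG variance terms, absorbing them into the extra quadratic tracking terms $c_{s,k}\Vert x_{s,k}-\bar x_s\Vert^2$ and $d_{s,k}\Vert y_{s,k}-\bar y_s\Vert^2$ in the Lyapunov function. Throughout I would drop the subscript $s$ since $s$ is fixed within the lemma. First I would record the two standard SVRG facts: unbiasedness of $G_x, G_y$, and the variance bounds $\BE\Vert G_x(x_k,y_k) - \nabla_x f(x_k,y_k)\Vert^2 \le L^2(\BE\Vert x_k - \bar x\Vert^2 + \BE\Vert y_k - \bar y\Vert^2)$ (and analogously for $G_y$), which follow from $\Var(\cdot) \le \BE\Vert\cdot\Vert^2$ together with Assumption~\ref{asm: L-smooth}.

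Next I would derive three pointwise recursions, mirroring the full-batch arguments \eqref{eq: 2-2}, \eqref{eq: 2-5}, \eqref{eq: 2-6} from Section~\ref{sec: GDA} but carrying the stochastic noise through expectations. Using Lemma~\ref{lem: g(x)-L-smooth} ($g$ is $(2L^2/\mu_y)$-smooth), plugging in $x_{k+1} = x_k - \tau_x G_x$ and taking expectation, I obtain a bound on $\BE\fA_{k+1}$ with an extra $\frac{\tau_x^2 L^2}{\mu_y}\BE\Vert G_x - \nabla_x f\Vert^2$ term. Bounding $f(x_k,y_k) - f(x_{k+1},y_{k+1})$ by splitting across an intermediate point $(x_{k+1},y_k)$ and using $L$-smoothness plus Young's inequality yields a bound on $\BE\fB_{k+1}$ with additional $O(\tau_x^2 L)$ and $O(\tau_y^2 L)$ variance contributions. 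Finally, a standard Young's-inequality expansion with free parameters $\gamma_1,\gamma_2$ yields
\begin{align*}
\BE\Vert x_{k+1}-\bar x\Vert^2 &\le (1+\tau_x\gamma_1)\Vert x_k-\bar x\Vert^2 + \big(\tau_x^2+\tfrac{\tau_x}{\gamma_1}\big)\Vert\nabla_x f\Vert^2 + \tau_x^2\BE\Vert G_x-\nabla_x f\Vert^2,
\end{align*}
and likewise for $y$. Adding the three pieces weighted as in $\fV_k$, the three variance terms telescope exactly into the recursive definitions of $c_{s,k}$ and $d_{s,k}$, so after substitution the variance no longer appears, leaving a clean inequality in $\Vert\nabla g(x_k)\Vert^2$, $\Vert\nabla_x f - \nabla g\Vert^2$, $\Vert\nabla_y f\Vert^2$, and the two Young leftovers $c_{k+1}(\tau_x^2+\tau_x/\gamma_1)\Vert\nabla_x f\Vert^2$ and $d_{k+1}(\tau_y^2+\tau_y/\gamma_2)\Vert\nabla_y f\Vert^2$.

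The key technical step, and likely the main obstacle, is showing that the Young leftovers are small. Setting $\gamma_1 = \lambda L/n^{\alpha/2}$ and $\gamma_2 = L/n^{\alpha/2}$ and $e_k = \max\{c_k,d_k\}$, the recursions for $c_k,d_k$ yield the scalar bound $e_k \le (1+2\nu/n^{3\alpha/2}) e_{k+1} + L\nu^2/n^{2\alpha}$ once one checks $\tau_x^2 L/\mu_y \le \tau_y^2/6$ and $\lambda\tau_x \le \tau_y/4$ from the parameter choices. With $c_M=d_M=0$ and $M = \lfloor n^{3\alpha/2}/(2\nu)\rfloor$, summing a geometric series gives $(1+2\nu/n^{3\alpha/2})^M \le \mathrm{e}$ and hence $e_0 \le L\nu(\mathrm{e}-1)/(2n^{\alpha/2})$. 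This implies $c_{k+1}(\tau_x^2+\tau_x/\gamma_1)\le 3\nu(\mathrm{e}-1)\tau_x$ and $d_{k+1}(\tau_y^2+\tau_y/\gamma_2)\le \nu(\mathrm{e}-1)\tau_y$, and with $\nu = 1/(176(\mathrm{e}-1))$ these are respectively at most $\tau_x/8$ and $\lambda\tau_x/8$.

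Plugging these bounds in, together with $11\lambda\tau_x/\tau_y \le 1/2$ from the parameter choices, gives
\begin{align*}
\BE\fV_{k+1} \le \fV_k - \frac{\tau_x}{8}\Vert\nabla g(x_k)\Vert^2 + \frac{7\tau_x}{8}\Vert\nabla_x f(x_k,y_k)-\nabla g(x_k)\Vert^2 - \frac{\lambda\tau_x}{8}\Vert\nabla_y f(x_k,y_k)\Vert^2.
\end{align*}
To close the argument, I would invoke Lemma~\ref{lem: new-bound-Bk}, which gives $\Vert\nabla_x f - \nabla g\Vert^2 \le (L^2/\mu_y^2)\Vert\nabla_y f\Vert^2$. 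Since $\lambda = 14L^2/\mu_y^2$, the positive $\Vert\nabla_x f-\nabla g\Vert^2$ term is bounded by $\frac{7\tau_x L^2}{8\mu_y^2}\Vert\nabla_y f\Vert^2 = \frac{\lambda\tau_x}{16}\Vert\nabla_y f\Vert^2$, which combined with $-\lambda\tau_x/8\cdot\Vert\nabla_y f\Vert^2$ leaves exactly $-\lambda\tau_x/16\cdot\Vert\nabla_y f\Vert^2$, yielding the claimed inequality. The main difficulty is bookkeeping: every numerical constant ($\nu$, the factor $22$ in $\tau_x$, the factor $14$ in $\lambda$, the geometric bound via $\mathrm{e}$) must align simultaneously, and the step where the SVRG variance gets fully absorbed into the $c_k,d_k$ recursion must be handled cleanly so that no residual stochastic-noise terms remain.
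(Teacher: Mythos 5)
Your proposal follows the paper's proof essentially step for step: the same Lyapunov function with the $c_{s,k},d_{s,k}$ tracking terms, the same variance bounds and Young's-inequality expansions with $\gamma_1=\lambda L/n^{\alpha/2}$ and $\gamma_2=L/n^{\alpha/2}$, the same scalar recursion on $e_k=\max\{c_k,d_k\}$ with the geometric-series bound $e_0\le L\nu(\mathrm{e}-1)/(2n^{\alpha/2})$, and the same final absorption of $\frac{7\tau_x}{8}\Vert\nabla_x f-\nabla g\Vert^2$ via Lemma~\ref{lem: new-bound-Bk} and $\lambda=14L^2/\mu_y^2$. All the numerical constants you cite align with the paper's, so this is the same argument and it is correct.
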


Now it is sufficient to show the convergence of SVRG-GDA.

\begin{thm}  \label{thm: SVRG-GDA}

Under Assumption \ref{asm: two-PL} and  \ref{asm: L-smooth},  if we  let $SM = \lceil {8}/{(\mu_x \tau_x)}  \rceil$, $T = \lceil \log(1/\epsilon) \rceil$ and $M,B,\tau_x,\tau_y$ defined in Lemma \ref{lem: SVRG-GDA-V_k}, then the following statement holds true for Algorithm \ref{alg: SVRG-GDA}:~
\begin{align*}
    \BE \left[\tilde \fA_{t+1}+ \frac{\lambda \tau_x}{\tau_y} \tilde \fB_{t+1} \right]\le \frac{1}{2}\left( \tilde \fA_t + \frac{\lambda \tau_x}{\tau_y} \tilde \fB_t \right),
\end{align*}
where $\tilde \fA_{t} = g(x_{t}) - g(x^{\ast}) $ and $\tilde \fB_{t} = g(x_{t}) - f(x_{t},y_{t})$. Furthermore, let $\alpha = {2}/{3}$, then it requires $\fO((n+  n^{2/3} \kappa_x \kappa_y^2) \log ({1}/{\epsilon}))$ stochastic first-order calls to achieve $g(x_T) - g(x^{\ast}) \le \epsilon$ in expectation.

\begin{proof}
By Lemma \ref{lem: SVRG-GDA-V_k} and Lemma \ref{lem: g(x)-PL} that $g$ satisfies $\mu_x$-PL in $x$ and Assumption \ref{asm: two-PL} that function $f$ satisfies $\mu_y$-PL in $y$, we have
\begin{align*}
    \BE[\mathcal{V}_{s,k+1}] \le \mathcal{V}_{s,k} - \frac{\mu_x \tau_x}{4} \mathcal{A}_{s,k} - \frac{\mu_y \tau_y}{8} \frac{\lambda \tau_x}{\tau_y} \mathcal{B}_{s,k} \le \mathcal{V}_{s,k} - \frac{\mu_x \tau_x}{4} \fV_{s,k},
\end{align*}
where in the last inequality we use $ {\mu_x \tau_x}/{4} \le {\mu_y \tau_y}/{8} $. Telescoping for $k=0,1,\dots,M-1$ and $s = 0,1,\dots,S-1$ and rearranging, we can see that in round $t$, it holds that
\begin{align*}
    \frac{1}{SM} \sum_{s=0}^{S-1} \sum_{k=0}^{M-1} \fV_{s,k} \le \frac{4}{\mu_x \tau_x SM} (\fV_{0,0} - \fV_{S,M}) \le \frac{1}{2} \fV_{0,0},
\end{align*}
where the last inequality is due to the choice of $S$.

The above inequality is exactly equivalent to what we want to prove:
\begin{align*}
    \BE \left[\tilde \fA_{t+1}+ \frac{\lambda \tau_x}{\tau_y} \tilde \fB_{t+1} \right]\le \frac{1}{2}\left( \tilde \fA_t + \frac{\lambda \tau_x}{\tau_y} \tilde \fB_t \right).
\end{align*}
Note that we have $M = \fO(n^{3\alpha/2})$ and $ S = \fO({\kappa_x \kappa_y^2}/{n^{\alpha/2}})$, then the complexity is
\begin{align*}
    \fO\left ( (n+SM + Sn ) \log \left(\frac{1}{\epsilon}\right) \right) = \fO \left( (n+ (n^{\alpha} + n^{1- \alpha/2})\kappa_x \kappa_y^2  ) \log\left(\frac{1}{\epsilon}\right)\right ). 
\end{align*}
Plugging in $\alpha = {2}/{3}$ yields the desired complexity and it can also be seen that it is also the best choice of $\alpha$.

\end{proof}
\end{thm}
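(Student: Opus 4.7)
The plan is to combine the per-step descent in Lemma~\ref{lem: SVRG-GDA-V_k} with the two-sided PL condition to obtain a geometric contraction of the ``clean'' potential $\fA + (\lambda\tau_x/\tau_y)\fB$ per outer round, and then iterate $T$ times. By Lemma~\ref{lem: g(x)-PL} we have $\Vert\nabla g(x_{s,k})\Vert^2 \ge 2\mu_x\fA_{s,k}$, and by Assumption~\ref{asm: two-PL} we have $\Vert\nabla_y f(x_{s,k},y_{s,k})\Vert^2 \ge 2\mu_y\fB_{s,k}$, so Lemma~\ref{lem: SVRG-GDA-V_k} becomes
\begin{align*}
    \BE[\fV_{s,k+1}] \le \fV_{s,k} - \frac{\mu_x\tau_x}{4}\fA_{s,k} - \frac{\mu_y\tau_y}{8}\cdot\frac{\lambda\tau_x}{\tau_y}\fB_{s,k}.
\end{align*}
A brief check using $\lambda = 14L^2/\mu_y^2$ and $\mu_x,\mu_y\le L$ shows $\mu_x\tau_x/4 \le \mu_y\tau_y/8$, so the two decrements collapse into a single rate $\mu_x\tau_x/4$ applied to $\fA_{s,k} + (\lambda\tau_x/\tau_y)\fB_{s,k}$.

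Next I would telescope this inequality across both the inner index $k$ and the epoch index $s$. The crucial structural fact is that the auxiliary terms $c_{s,k}\Vert x_{s,k}-\bar x_s\Vert^2 + d_{s,k}\Vert y_{s,k}-\bar y_s\Vert^2$ in $\fV_{s,k}$ vanish at both epoch boundaries: at $k=0$ because $x_{s,0}=\bar x_s$ and $y_{s,0}=\bar y_s$, and at $k=M$ because Lemma~\ref{lem: SVRG-GDA-V_k} imposes $c_{s,M}=d_{s,M}=0$. The updates $\bar x_{s+1}=x_{s,M}$ and $\bar y_{s+1}=y_{s,M}$ glue successive epochs together, so summing over $k=0,\dots,M-1$ and $s=0,\dots,S-1$ telescopes the left side to terminal ``clean'' values and yields
\begin{align*}
    \frac{\mu_x\tau_x}{4}\sum_{s,k}\BE\!\left[\fA_{s,k} + \frac{\lambda\tau_x}{\tau_y}\fB_{s,k}\right] \le \fA_{0,0} + \frac{\lambda\tau_x}{\tau_y}\fB_{0,0}
\end{align*}
after dropping the non-negative terminal contribution. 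The uniform random selection of $(x_{t+1},y_{t+1})$ converts the left-hand average into $\BE[\tilde\fA_{t+1} + (\lambda\tau_x/\tau_y)\tilde\fB_{t+1}]$, while the right-hand initial values equal $\tilde\fA_t + (\lambda\tau_x/\tau_y)\tilde\fB_t$ since $x_{0,0}=\bar x_0 = x_t$ at the start of round $t+1$. The choice $SM \ge 8/(\mu_x\tau_x)$ then supplies the factor $1/2$, and iterating over $T=\lceil\log(1/\epsilon)\rceil$ outer rounds gives $\BE[g(x_T)-g(x^\ast)]\le\epsilon$ up to a universal constant absorbed into $T$.

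For the complexity, each outer round runs $S$ epochs, each incurring $n$ SFO calls for the full gradient plus $M$ SFO calls for the batch-$1$ inner updates, for a per-round cost of $\fO(S(n+M))$. The parameter choices in Lemma~\ref{lem: SVRG-GDA-V_k} force $SM = \Theta(\kappa_x\kappa_y^2 n^\alpha)$ and $M = \Theta(n^{3\alpha/2})$, hence $S = \Theta(\kappa_x\kappa_y^2 n^{-\alpha/2})$, giving per-round cost $\Theta(\kappa_x\kappa_y^2(n^{1-\alpha/2} + n^\alpha))$. Balancing the two exponents $1-\alpha/2 = \alpha$ yields the optimal $\alpha = 2/3$; multiplying by $T=\log(1/\epsilon)$ and absorbing an additive $n\log(1/\epsilon)$ term for the regime $\kappa_x\kappa_y^2 \lesssim n^{1/3}$ gives the claimed $\fO((n+n^{2/3}\kappa_x\kappa_y^2)\log(1/\epsilon))$ bound. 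I expect the main obstacle to be the bookkeeping in the telescoping step: one has to verify carefully that the $c_{s,k}, d_{s,k}$ contributions really cancel at each epoch boundary so the recursion collapses cleanly onto $\fA + (\lambda\tau_x/\tau_y)\fB$, and that the chaining $\bar x_{s+1}=x_{s,M}$ allows summing across $s$ without incurring any cross-epoch slack.
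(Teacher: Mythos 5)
Your proposal follows essentially the same route as the paper's proof: apply the PL inequalities for $g$ and $-f(x,\cdot)$ to the per-step descent in Lemma~\ref{lem: SVRG-GDA-V_k}, collapse the two decrements into a single rate via $\mu_x\tau_x/4 \le \mu_y\tau_y/8$, telescope across $k$ and $s$ (using that the $c_{s,k},d_{s,k}$ terms vanish at both epoch boundaries and that $\bar x_{s+1}=x_{s,M}$ chains the epochs), convert the resulting average into the uniformly sampled output, invoke $SM\ge 8/(\mu_x\tau_x)$ for the factor $1/2$, and balance $n^{1-\alpha/2}$ against $n^{\alpha}$ to get $\alpha=2/3$. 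The only (harmless) difference is that you state the per-step decrement in terms of the clean potential $\fA_{s,k}+(\lambda\tau_x/\tau_y)\fB_{s,k}$ rather than the full $\fV_{s,k}$, which is in fact the more careful version of the paper's own telescoping step.
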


\section{Proof of Section \ref{sec: SPIDER-for-PL}} \label{apx: SPIDER}

In this section, we show that SPIDER-type stochastic gradient estimators outperform SVRG-type estimators with complete proofs.
The following lemma controls the variance of gradient estimators using the recursive update formula as SPIDER.
\begin{lem}[{\citet[Modified form Lemma 1]{fang2018spider}}] \label{lem: spider-var}
In Algorithm \ref{alg: SPIDER-GDA},  it holds true that
{\small\begin{align*}
\mathbb{E}[ \Vert G_x(x_k,y_k) - \nabla_x f(x_k,y_k) \Vert^2] &\le \frac{L^2 }{B}\sum_{j=(n_k - 1)M}^k \left(\tau_x^2 \mathbb{E}[\Vert G_x(x_j,y_j) \Vert^2] + \tau_y^2 \mathbb{E}[\Vert G_y(x_j,y_j) \Vert^2] \right),\\
\mathbb{E}[ \Vert G_y(x_k,y_k) - \nabla_y f(x_k,y_k) \Vert^2] &\le \frac{L^2 }{B}\sum_{j=(n_k - 1)M}^k \left(\tau_x^2\mathbb{E}[\Vert G_x(x_j,y_j) \Vert^2] + \tau_y^2 \mathbb{E}[\Vert G_y(x_j,y_j) \Vert^2] \right), 
\end{align*}}
where  $n_k = \lceil k/M \rceil$ and $(n_k - 1)M \le k \le n_k M - 1$.
\end{lem}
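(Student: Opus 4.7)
\textbf{Proof plan for Lemma~\ref{lem: spider-var}.}

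The plan is to derive a one-step recursion for the squared error of the gradient estimator and then telescope it back to the most recent checkpoint, where the error vanishes. Let me focus on $G_x$; the argument for $G_y$ is identical by symmetry. Define the error process $E_k^x := G_x(x_k,y_k) - \nabla_x f(x_k,y_k)$. For any iterate $k$ with $\mod(k,M)\neq 0$, the SPIDER recursion rewrites as
\begin{align*}
E_k^x = E_{k-1}^x + \frac{1}{B}\sum_{i\in S_x}\bigl(\nabla_x f_i(x_k,y_k)-\nabla_x f_i(x_{k-1},y_{k-1})\bigr) - \bigl(\nabla_x f(x_k,y_k)-\nabla_x f(x_{k-1},y_{k-1})\bigr).
\end{align*}
Since the mini-batch $S_x$ is drawn independently of the history $\fF_{k-1}$ generated by the algorithm up to step $k-1$, the second and third terms together form a zero-mean random vector conditionally on $\fF_{k-1}$. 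Consequently, the cross-term in $\BE\|E_k^x\|^2$ cancels and I obtain the orthogonal decomposition
\begin{align*}
\BE\|E_k^x\|^2 = \BE\|E_{k-1}^x\|^2 + \BE\left\|\frac{1}{B}\sum_{i\in S_x}\Delta_i^{(k)}\right\|^2,
\end{align*}
where $\Delta_i^{(k)} := \bigl(\nabla_x f_i(x_k,y_k)-\nabla_x f_i(x_{k-1},y_{k-1})\bigr) - \bigl(\nabla_x f(x_k,y_k)-\nabla_x f(x_{k-1},y_{k-1})\bigr)$ has conditional mean zero.

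Next, using the independence and zero-mean property of the summands in $S_x$, the variance of the sample average satisfies
\begin{align*}
\BE\left\|\frac{1}{B}\sum_{i\in S_x}\Delta_i^{(k)}\right\|^2 \le \frac{1}{B}\BE\left\|\nabla_x f_i(x_k,y_k)-\nabla_x f_i(x_{k-1},y_{k-1})\right\|^2,
\end{align*}
because subtracting the mean only decreases the second moment. Applying the $L$-smoothness of each $f_i$ from Assumption~\ref{asm: L-smooth} and then substituting the GDA update relations $\|x_k-x_{k-1}\|^2=\tau_x^2\|G_x(x_{k-1},y_{k-1})\|^2$ and $\|y_k-y_{k-1}\|^2=\tau_y^2\|G_y(x_{k-1},y_{k-1})\|^2$ yields
\begin{align*}
\BE\|E_k^x\|^2 \le \BE\|E_{k-1}^x\|^2 + \frac{L^2}{B}\Bigl(\tau_x^2\BE\|G_x(x_{k-1},y_{k-1})\|^2 + \tau_y^2\BE\|G_y(x_{k-1},y_{k-1})\|^2\Bigr).
\end{align*}

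Finally, I telescope this recursion from the last checkpoint index $(n_k-1)M$ up to $k$. At the checkpoint, the algorithm sets $G_x(x_{(n_k-1)M},y_{(n_k-1)M})=\nabla_x f(x_{(n_k-1)M},y_{(n_k-1)M})$, so $E_{(n_k-1)M}^x=0$ and the telescoped sum gives exactly the claimed bound (up to the convention of whether the summand $j=k$ is included, which only loosens the bound with a nonnegative term). The same argument applied to the $y$-estimator gives the second inequality. The main delicate point is the independence of $S_x$ (and $S_y$) from $\fF_{k-1}$, which is the key structural property of the SPIDER estimator that makes the cross-term vanish and enables the clean telescoping; everything else reduces to $L$-smoothness and the deterministic relation between successive iterates and the estimator norms.
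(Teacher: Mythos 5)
Your argument is correct and is essentially the standard SPIDER variance-bound proof that the paper imports by citing \citet[Lemma 1]{fang2018spider} rather than reproving: the conditional-zero-mean increment gives the orthogonal one-step recursion, the $1/B$ variance reduction and $L$-smoothness of each $f_i$ bound the increment by the iterate displacements, and telescoping to the checkpoint (where the error is zero) yields the claim, with the extra $j=k$ summand only loosening the bound. The one structural point you correctly flag --- independence of the fresh mini-batches $S_x$, $S_y$ from the history so that $x_k,y_k$ are measurable with respect to the past and the cross-term vanishes --- is exactly the property the cited lemma rests on, so there is nothing to add.
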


We define the following Lyapunov function to measure the progress in each epoch:
\begin{align*}
    \tilde{\mathcal{V}}_{t} = \tilde {\mathcal{A}}_{t} + \frac{\lambda \tau_x}{\tau_y}\tilde{\mathcal{B}}_t,
\end{align*}
where $\tilde{\mathcal{A}}_t = g(\tilde x_t) - g(x^*)  $ and $\tilde{\mathcal{B}}_t = g(\tilde x_t) - f(\tilde x_t, \tilde y_t)$.

The following lemma describes the main convergence property of SPIDER-GDA.
\begin{lem} \label{lem: key-lem-for-Spider}
Under Assumption \ref{asm: one-PL} and \ref{asm: L-smooth}, setting all the parameters as defined in Theorem \ref{thm: SPIDER-GDA}, then it holds true that
\begin{align*}
    \BE \left[ \frac{\tau_x}{2} \Vert \nabla g(\tilde x_{t+1}) \Vert^2 + \frac{\lambda \tau_x}{4} \Vert \nabla_y f(\tilde x_{t+1},\tilde y_{t+1}) \Vert^2 \right] &\le \frac{1}{K}  \BE [ \tilde \fV_t ].
\end{align*}
\begin{proof}
First of all, we fix $t$ and analyze the inner loop. We define 
\begin{align*}
    \fV_{t,k} = \fA_{t,k} + \frac{\lambda \tau_x}{\tau_y} \fB_{t,k},
\end{align*}
where $\fA_{t,k} = g(x_{t,k}) - g(x^{\ast})$ and $\fB_{t,k} = g(x_{t,k}) - f(x_{t,k},y_{t,k}) $.

For simplification, we omit the subscripts $t$ when there is no ambiguity.
Note that $g(x)$ is $(2L^2/\mu_y)$-smooth, we have
\begin{align} \label{eq: bound-Ak-spider}
\begin{split}
    \mathbb{E} [g(x_{k+1}) ] &\le \mathbb{E}\left[ g(x_k) + \nabla g(x_k)^\top(x_{k+1} - x_k) + \frac{L^2}{\mu_y} \Vert x_{k+1} - x_k \Vert^2\right] \\
&= \mathbb{E}\left[ g(x_k)  - \tau_x\nabla g(x_k)^\top G_x(x_k,y_k) + \frac{L^2 \tau_x^2}{\mu_y} \Vert G_x(x_k,y_k) \Vert^2\right] \\
&= \mathbb{E}\left[ g(x_k)  - \frac{\tau_x}{2} \Vert \nabla g(x_k) \Vert^2 + \frac{\tau_x}{2} \Vert \nabla g(x_k) - G_x(x_k,y_k) \Vert^2 \right] \\
&\quad 
+\left(\frac{L^2 \tau_x^2}{\mu_y} - \frac{\tau_x}{2}\right) \mathbb{E} [\Vert G_x(x_k,y_k) \Vert^2] \\
&\le \mathbb{E} \left[ g(x_k)  - \frac{\tau_x}{2} \Vert \nabla g(x_k) \Vert^2 - \frac{\tau_x}{4} \Vert G_x(x_k,y_k) \Vert^2 \right] \\
&\quad +\mathbb{E}\left[ \tau_x \Vert \nabla g(x_k) - \nabla_x f(x_k,y_k) \Vert^2+ \tau_x \Vert  \nabla_x f(x_k,y_k) - G_x(x_k,y_k )\Vert^2\right].
\end{split}
\end{align}
where the last inequality uses  $\tau_x \le \mu_y / (4L^2)$.
Similarly, we can also show that
\begin{align*}
&\quad \mathbb{E}[ f(x_k,y_k)  - f(x_{k+1},y_k)] \\
&\le  \mathbb{E}\left[ - \nabla_x f(x_k,y_k)^\top (x_{k+1}-x_k) + \frac{L}{2} \Vert x_{k+1} - x_k \Vert^2\right] \\
&= \mathbb{E}\left[  \tau_x\nabla_x f(x_k,y_k)^\top G_x(x_k,y_k) + \frac{\tau_x^2 L}{2} \Vert G_x(x_k,y_k) \Vert^2\right] \\
&= \mathbb{E}\left[  \tau_x (\nabla_x f(x_k, y_k) - G_x(x_k,y_k))^\top G_x(x_k,y_k) +  \left(\frac{\tau_x^2L}{2} +\tau_x\right) \Vert G_x(x_k,y_k) \Vert^2\right] \\
&\le \mathbb{E}\left[ \frac{\tau_x}{2} \Vert  \nabla_x f(x_k,y_k)  - G_x(x_k,y_k) \Vert^2  +2 \tau_x \Vert G_x(x_k,y_k) \Vert^2\right],
\end{align*}
and 
\begin{align*}
&\quad\mathbb{E}[ f(x_{k+1},y_k) - f(x_{k+1},y_{k+1})] \\
&\le \mathbb{E}\left[ - \nabla_y f(x_{k+1},y_k)^\top (y_{k+1} - y_k) + \frac{L}{2} \Vert y_{k+1} - y_k \Vert^2\right] \\
&= \mathbb{E}\left[ - \tau_y\nabla_y f(x_{k+1},y_k)^\top G_y(x_k,y_k) + \frac{\tau_y^2L}{2} \Vert G_y(x_k,y_k) \Vert^2\right] \\
&=  \mathbb{E}\left[ -
\frac{\tau_y}{2} \Vert \nabla_y f(x_{k+1},y_k) \Vert^2 + \frac{\tau_y}{2} \Vert \nabla_y f(x_{k+1},y_k) - G_y(x_k,y_k) \Vert^2 \right] \\
&\quad 
 -\left( \frac{\tau_y}{2} - \frac{\tau_y^2L}{2} \right) \mathbb{E} \left[ \Vert G_y(x_k,y_k) \Vert^2\right] \\
&\le \mathbb{E}\left[ -
\frac{\tau_y}{4} \Vert \nabla_y f(x_{k},y_k) \Vert^2 -\left( \frac{\tau_y}{2} - \frac{\tau_y^2L}{2} \right)  \Vert G_y(x_k,y_k) \Vert^2  \right] \\
&\quad + \mathbb{E} \left[ \frac{\tau_y}{2} \Vert \nabla_y f(x_{k},y_k) - G_y(x_k,y_k) \Vert^2 + 2\tau_y \Vert \nabla_y f(x_{k+1},y_k) - \nabla_y f(x_k,y_k) \Vert^2 \right] \\
&\le \mathbb{E}\left[ -
\frac{\tau_y}{4} \Vert \nabla_y f(x_{k},y_k) \Vert^2 -\left( \frac{\tau_y}{2} - \frac{\tau_y^2L}{2} \right)  \Vert G_y(x_k,y_k) \Vert^2  \right] \\
&\quad + \mathbb{E} \left[ \frac{\tau_y}{2} \Vert \nabla_y f(x_{k},y_k) - G_y(x_k,y_k) \Vert^2 + 2\tau_y \tau_x^2 L^2 \Vert  G_x(x_k,y_k) \Vert^2 \right]
\end{align*}

Summing up the above two inequalities, we have
\begin{align*}
&\quad \mathbb{E} [f(x_k,y_k) - f(x_{k+1},y_{k+1})] \\
&\le \mathbb{E} \left[ - \frac{\tau_y}{4} \Vert \nabla_y f(x_k,y_k) \Vert^2 - \frac{\tau_y}{4} \Vert G_y(x_k,y_k) \Vert^2 + 3 \tau_x \Vert G_x (x_k,y_k) \Vert^2 \right] \\
&\quad + \mathbb{E} \left[ \frac{\tau_y}{2} \Vert \nabla_y f(x_k,y_k) - G_y(x_k,y_k) \Vert^2 + \frac{\tau_x}{2} \Vert \nabla_x f(x_k,y_k) - G_x(x_k,y_k) \Vert^2  \right] \\.
\end{align*}
Combing with inequality (\ref{eq: bound-Ak-spider}), it can be seen that
\begin{align*}
\mathbb{E}[\mathcal{B}_{k+1} ] &= \mathbb{E} [ g(x_{k+1}) -  f(x_{k+1},y_{k+1})] \\
&= \mathbb{E} [ g(x_{k+1} )- g(x_k) + g(x_k)  -f(x_k,y_k) + f(x_k,y_k)- f(x_{k+1},y_{k+1})] \\
&= \mathbb{E} \left[ \mathcal{B}_k   - \frac{\tau_y}{4} \Vert \nabla_y f(x_k,y_k) \Vert^2 - \frac{\tau_y}{4} \Vert G_y(x_k,y_k) \Vert^2 \right] \\
&\quad +\mathbb{E}  \left[  \tau_x \Vert \nabla g(x_k) - \nabla_x f(x_k,y_k) \Vert^2 +\frac{3\tau_x}{2} \Vert G_x(x_k,y_k )- \nabla_x f(x_k,y_k) \Vert^2 \right] \\
&\quad + \mathbb{E} \left[ \frac{\tau_y}{2} \Vert G_y(x_k,y_k) - \nabla_y f(x_k,y_k) \Vert^2 + 3 \tau_x \Vert G_x(x_k,y_k) \Vert^2\right].
\end{align*}
Therefore, using $24\lambda \tau_x \le \tau_y$ and inequality (\ref{eq: bound-Ak-spider}) again, we obtain
\begin{align*}
\mathbb{E}[ \fV_{k+1} ] &= \mathbb{E}\left[ \mathcal{A}_{k+1} + \frac{\lambda \tau_x}{\tau_y} \mathcal{B}_{k+1}\right] \\
&\le \BE\left[ \fA_k + \frac{\lambda \tau_x}{\tau_y} \fB_k - \frac{\tau_x}{2} \Vert \nabla g(x_k) \Vert^2 - \frac{\lambda \tau_x}{4} \Vert \nabla_y f(x_k,y_k) \Vert^2  \right] \\
&\quad + \mathbb{E}\left[ \left(  \frac{3\lambda \tau_x^2}{\tau_y} - \frac{\tau_x}{4} \right)  \Vert G_x(x_k,y_k) \Vert^2 - \frac{\lambda \tau_x}{4} \Vert G_y (x_k,y_k) \Vert^2  \right] \\
&\quad + \left( \tau_x + \frac{\lambda \tau_x}{\tau_y}\right) \mathbb{E} \left[ \Vert \nabla g(x_k) - \nabla_x f(x_k,y_k) \Vert^2\right] \\
&\quad + \left( \tau_x + \frac{3 \lambda \tau_x^2 }{2 \tau_y}  \right) \BE \left[ \Vert G_x(x_k,y_k) - \nabla_x f(x_k,y_k) \Vert^2 \right]\\
&\quad +  \frac{\lambda \tau_x}{2} \mathbb{E}\left[ \Vert G_y(x_k,y_k) - \nabla_y f(x_k,y_k) \Vert^2 \right] \\
&\le \mathbb{E}\left[ \mathcal{A}_k + \frac{\lambda \tau_x }{ \tau_y}\mathcal{B}_k - \frac{\tau_x}{2} \Vert \nabla g(x_k) \Vert^2 -\frac{\lambda \tau_x}{4} \Vert \nabla_y f(x_k,y_k) \Vert^2\right]\\
&\quad +  2 \tau_x \mathbb{E} \left[\Vert \nabla g(x_k) - \nabla_x f(x_k,y_k) \Vert^2\right] \\
&\quad +  \mathbb{E}\left[\frac{5\tau_x}{2} \Vert G_x(x_k,y_k )- \nabla_x f(x_k,y_k) \Vert^2 - \frac{\tau_x}{8} \Vert G_x(x_k,y_k) \Vert^2  \right] \\
&\quad + \mathbb{E}\left[ \frac{\lambda \tau_x}{2} \Vert G_y(x_k,y_k) - \nabla_y f(x_k,y_k) \Vert^2 - \frac{\lambda \tau_x}{4} \Vert G_y(x_k,y_k) \Vert^2
\right]. 
\end{align*}
Furthermore,
\begin{align*}
\mathbb{E}[\mathcal{V}_{k+1} ] &\le 
\mathbb{E}\left[ \mathcal{V}_k - \frac{\tau_x}{2} \Vert \nabla g(x_k) \Vert^2 -\frac{\lambda \tau_x}{4} \Vert \nabla_y f(x_k,y_k) \Vert^2 + \frac{2L^2 \tau_x}{\mu_y^2} \Vert \nabla_y f(x_k,y_k) \Vert^2\right]  \\
&\quad +  \mathbb{E}\left[\frac{5\tau_x}{2} \Vert G_x(x_k,y_k )- \nabla_x f(x_k,y_k) \Vert^2 - \frac{\tau_x}{8} \Vert G_x(x_k,y_k) \Vert^2\right] \\
&\quad + \mathbb{E}\left[ \lambda \tau_x \Vert G_y(x_k,y_k) - \nabla_y f(x_k,y_k) \Vert^2 - \frac{\lambda \tau_x}{4} \Vert G_y(x_k,y_k) \Vert^2\right] \\
&\le \mathbb{E} \left[ \mathcal{V}_k - \frac{\tau_x}{2} \Vert \nabla g(x_k) \Vert^2 -\frac{\lambda \tau_x}{4} \Vert \nabla_y f(x_k,y_k) \Vert^2 \right] \\
&\quad - \mathbb{E}\left[ \frac{\tau_x}{8} \Vert G_x(x_k,y_k) \Vert^2 + \frac{\lambda \tau_x}{8} \Vert G_y(x_k,y_k) \Vert^2 \right] \\
&\quad +  \mathbb{E}\left[  \frac{5\tau_x}{2} \Vert G_x(x_k,y_k )- \nabla_x f(x_k,y_k) \Vert^2 + \frac{9\lambda \tau_x}{8} \Vert G_y(x_k,y_k) - \nabla_y f(x_k,y_k) \Vert^2\right].
\end{align*}
Above, the first inequality follows from Lemma \ref{lem: new-bound-Bk} and the second inequality uses Young's inequality that $ \BE [\Vert a - b \Vert^2] \le \BE [ \Vert a \Vert^2 + \Vert b \Vert^2]  $ and  $\lambda = 32L^2 / \mu_y^2$.

Plug in the variance bound by Lemma \ref{lem: spider-var} and  $B  = M$, we have
\begin{align*}
\mathbb{E}[\mathcal{V}_{k+1}] &\le \mathbb{E}\left[ \mathcal{V}_k - \frac{\tau_x}{2} \Vert \nabla g(x_k) \Vert^2 - \frac{\lambda \tau_x}{4} \Vert \nabla_y f(x_k,y_k) \Vert^2 \right] \\
&\quad + \mathbb{E} \left[ \left(\frac{5 }{2} + \frac{9\lambda }{8}\right) \frac{\tau_x^3 L^2}{M} \sum_{j=(n_k-1)M}^k \Vert G_x(x_j,y_j) \Vert^2  - \frac{\tau_x}{8} \Vert G_x(x_k,y_k) \Vert^2\right] \\
&\quad + \mathbb{E}\left[ \left(\frac{5 }{2} + \frac{9\lambda }{8}\right) \frac{\tau_x \tau_y^2 L^2}{M} \sum_{j=(n_k-1)M}^k\Vert G_y(x_j,y_j) \Vert^2  - \frac{\lambda \tau_x}{8} \Vert G_y(x_k,y_k) \Vert^2\right] \\
&\le   \mathbb{E}\left[ \mathcal{V}_k - \frac{\tau_x}{2} \Vert \nabla g(x_k) \Vert^2 -\frac{\lambda \tau_x}{4} \Vert \nabla_y f(x_k,y_k) \Vert^2 \right] \\
&\quad+ \mathbb{E} \left[ \frac{5\lambda \tau_x^3  L^2}{4M} \sum_{j=(n_k-1)M}^k  \Vert G_x(x_j,y_j) \Vert^2  - \frac{\tau_x}{8} \Vert G_x(x_k,y_k) \Vert^2\right] \\
&\quad + \mathbb{E}\left[ \frac{5\lambda \tau_x \tau_y^2 L^2}{4M} \sum_{j=(n_k-1)M}^k \Vert G_y(x_j,y_j) \Vert^2  - \frac{\lambda \tau_x}{8} \Vert G_y(x_k,y_k) \Vert^2\right].
\end{align*}

Now we telescope for $i = (n_k-1)M, \cdots, k$.
\begin{align*}
\mathbb{E} [ \fV_{k+1}] &\le \mathbb{E}\left[ \mathcal{V}_{(n_k-1)M} - \frac{\tau_x}{2} \Vert \nabla g(x_k) \Vert^2 -\frac{\lambda \tau_x}{4} \Vert \nabla_y f(x_k,y_k) \Vert^2 \right] \\
&\quad+ \BE \left[ \sum_{i= (n_k-1)M}^k \left( \sum_{j=(n_k-1)M}^i \frac{5\lambda \tau_x^3  L^2}{4M}  \Vert G_x(x_j,y_j) \Vert^2  - \frac{\tau_x}{8} \Vert G_x(x_i,y_i) \Vert^2\right)\right] \\
&\quad + \mathbb{E}\left[  \sum_{i= (n_k-1)M}^k \left(  \sum_{j=(n_k-1)M}^i \frac{5\lambda \tau_x \tau_y^2 L^2}{4M} \Vert G_y(x_j,y_j) \Vert^2  - \frac{\lambda \tau_x}{8} \Vert G_y(x_i,y_i) \Vert^2\right)\right] \\
&\le \mathbb{E}\left[ \mathcal{V}_{(n_k-1)M} - \frac{\tau_x}{2} \Vert \nabla g(x_k) \Vert^2 -\frac{\lambda \tau_x}{4} \Vert \nabla_y f(x_k,y_k) \Vert^2 \right] \\
&\quad + \mathbb{E}  \left[\sum_{j=(n_k-1)M}^k  \left( \frac{5\lambda \tau_x^3  L^2}{4}  \Vert G_x(x_j,y_j) \Vert^2  - \frac{\tau_x}{8} \Vert G_x(x_j,y_j) \Vert^2\right)\right] \\
&\quad + \mathbb{E}\left[   \sum_{j=(n_k-1)M}^k  \left( \frac{5\lambda \tau_x \tau_y^2 L^2}{4}\Vert G_y(x_j,y_j) \Vert^2  - \frac{\lambda \tau_x}{8} \Vert G_y(x_j,y_j) \Vert^2\right)\right] \\
&\le \BE \left[\fV_{(n_k-1)M} - \frac{\tau_x}{2} \sum_{j=(n_k-1)M}^k \Vert \nabla g(x_k) \Vert^2 -\frac{\lambda \tau_x}{4} \sum_{j=(n_k-1)M}^k \Vert \nabla_y f(x_k,y_k) \Vert^2 \right] \\
&\quad 
- \mathbb{E} \left[\frac{\tau_x}{16} \sum_{j=(n_k-1)M}^k \Vert G_x(x_j,y_j) \Vert^2 +\frac{\lambda \tau_x}{16} \sum_{j=(n_k-1)M}^k \Vert G_y(x_j,y_j) \Vert^2 \right],
\end{align*}
where we use $ \lambda \tau_x^2L^2 \le 1/20 $ and $\tau_y^2L^2 \le 1/20$ in the last inequality.

From now on, we need to write down the subscripts with respect to $t$. Telescope for $k = 0,\cdots,K-1$ and drop the negative terms containing $ \Vert G_x (x_j,y_j) \Vert^2$ and $ \Vert G_y (x_j,y_j) \Vert^2$, we have
\begin{align*}
    \BE[\fV_{t,K}] \le \mathbb{E}\left[ \mathcal{V}_{t,0}   - \frac{\tau_x}{2} \sum_{k=0}^{K-1} \Vert \nabla g(x_k) \Vert^2 - \frac{\lambda \tau_x}{4} \sum_{k=0}^{K-1} \Vert \nabla_y f(x_k,y_k) \Vert^2\right].
\end{align*}
Rearranging this inequality concludes the proof.

\end{proof}
\end{lem}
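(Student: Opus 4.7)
The plan is to run a one-epoch analysis of SPIDER-GDA on the Lyapunov function $\mathcal{V}_{t,k} \triangleq \mathcal{A}_{t,k} + (\lambda \tau_x/\tau_y)\mathcal{B}_{t,k}$ with $\mathcal{A}_{t,k} = g(x_{t,k}) - g(x^\ast)$ and $\mathcal{B}_{t,k} = g(x_{t,k}) - f(x_{t,k},y_{t,k})$, establish a per-iteration descent that leaves room for SPIDER variance, telescope inside a length-$M$ mini-batch window to absorb that variance, telescope again over $k = 0,\dots,K-1$, and finally apply the uniform random output rule to translate the resulting per-epoch sum into the expectation at $(\tilde x_{t+1},\tilde y_{t+1})$.

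For the one-step descent, I would first expand $g(x_{t,k+1})$ using the $(2L^2/\mu_y)$-smoothness of $g$ (Lemma \ref{lem: g(x)-L-smooth}) and the identity $-2a^\top b = -\|a\|^2 - \|b\|^2 + \|a-b\|^2$ to produce the key inner-descent bound on $\mathcal{A}_{t,k}$: a $-(\tau_x/2)\|\nabla g(x_k)\|^2$ term, a small negative $\|G_x\|^2$ term (using $\tau_x \le \mu_y/(4L^2)$), and two residual terms $\|\nabla g - \nabla_x f\|^2$ and $\|\nabla_x f - G_x\|^2$. For $\mathcal{B}_{t,k}$, I would split $f(x_k,y_k) - f(x_{k+1},y_{k+1})$ into the two coordinate moves, use $L$-smoothness of $f$ together with $\tau_x \le 1/L$ and $\tau_y \le 1/L$, and apply Young's inequality to handle the cross term $\nabla_y f(x_{k+1},y_k) - \nabla_y f(x_k,y_k)$ via $\tau_x^2 L^2 \|G_x\|^2$. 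Combining these with weight $\lambda \tau_x/\tau_y$ (and using $24\lambda\tau_x \le \tau_y$) cancels all excess $\|G_x\|^2$ and $\|G_y\|^2$ terms except a reserve of $-(\tau_x/8)\|G_x\|^2 - (\lambda\tau_x/8)\|G_y\|^2$, while the residual $\|\nabla g - \nabla_x f\|^2$ is absorbed into $-(\lambda\tau_x/4)\|\nabla_y f\|^2$ using $\lambda = 32 L^2/\mu_y^2$ together with Lemma \ref{lem: new-bound-Bk}.

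The main obstacle is handling SPIDER's variance, which is not a simple bounded-variance term but a \emph{historical} sum $(L^2/B)\sum_{j=(n_k-1)M}^{k}(\tau_x^2\|G_x(x_j,y_j)\|^2 + \tau_y^2\|G_y(x_j,y_j)\|^2)$ by Lemma \ref{lem: spider-var}. The trick is that I have already saved a per-step negative $\|G_x\|^2$ and $\|G_y\|^2$ reserve. Setting $B = M$ and telescoping from $k = (n_k-1)M$ up to the current index $k$, a standard double-sum swap turns $\sum_{i}\sum_{j \le i}\|G(x_j,y_j)\|^2/M$ into at most $\sum_j \|G(x_j,y_j)\|^2$. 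Then the net coefficient in front of $\|G_x\|^2$ becomes $(5\lambda\tau_x^3 L^2/4 - \tau_x/16)$, which is non-positive as soon as $\lambda\tau_x^2 L^2 \le 1/20$, and similarly for $\|G_y\|^2$ provided $\tau_y^2 L^2 \le 1/20$; both are ensured by the stated choices $\tau_y = 1/(5L)$ and $\tau_x = \tau_y/(24\lambda)$.

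After dropping those non-positive variance terms, the telescoped inequality across a single mini-batch window reads $\mathbb{E}[\mathcal{V}_{t,k+1}] \le \mathbb{E}[\mathcal{V}_{t,(n_k-1)M}] - (\tau_x/2)\sum_{j=(n_k-1)M}^{k}\|\nabla g(x_j)\|^2 - (\lambda\tau_x/4)\sum_{j=(n_k-1)M}^{k}\|\nabla_y f(x_j,y_j)\|^2$. Chaining across successive windows through the entire inner loop yields $\mathbb{E}[\mathcal{V}_{t,K}] \le \mathbb{E}[\mathcal{V}_{t,0}] - (\tau_x/2)\sum_{k=0}^{K-1}\|\nabla g(x_{t,k})\|^2 - (\lambda\tau_x/4)\sum_{k=0}^{K-1}\|\nabla_y f(x_{t,k},y_{t,k})\|^2$. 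Since $\mathcal{V}_{t,K} \ge 0$, $\mathcal{V}_{t,0} = \tilde{\mathcal{V}}_t$, and $(\tilde x_{t+1},\tilde y_{t+1})$ is drawn uniformly from the inner iterates, dividing by $K$ converts the sums into the expectations $\mathbb{E}\|\nabla g(\tilde x_{t+1})\|^2$ and $\mathbb{E}\|\nabla_y f(\tilde x_{t+1},\tilde y_{t+1})\|^2$, delivering exactly the claimed bound.
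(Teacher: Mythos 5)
Your proposal is correct and follows essentially the same route as the paper: the same Lyapunov function, the same one-step descent bounds on $\mathcal{A}_{t,k}$ and $\mathcal{B}_{t,k}$ with the identical reserve of negative $\Vert G_x\Vert^2$ and $\Vert G_y\Vert^2$ terms, the same use of Lemma \ref{lem: spider-var} with $B=M$ and the double-sum swap to absorb the accumulated variance, and the same final telescoping plus uniform-output argument. No substantive differences.
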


Equipped with the above lemma, we can easily prove Theorem \ref{thm: SPIDER-GDA}.

\subsection{Proof of Theorem \ref{thm: SPIDER-GDA}}

\begin{proof}

Recall that $g(\,\cdot\,)$ is $\mu_x$-PL and $-f(x,\,\cdot\,)$ is $\mu_y$-PL, it holds that
\begin{align*}
    \Vert \nabla g(x) \Vert^2 &\ge 2 \mu_x ( g(x) - g(x^*) ) \\
    \Vert \nabla_y f(x,y) \Vert^2 &\ge 2 \mu_y (g(x) - f(x,y) ).
\end{align*}
Then Lemma \ref{lem: key-lem-for-Spider} implies
\begin{align*}
    \mu_x \tau_x  \BE \left[\tilde \fA_{t+1} +  \frac{\lambda \mu_y}{2 \mu_x} \tilde \fB_{t+1}\right]  \le \frac{1}{K} \BE\left[\tilde \fA_t + \frac{\lambda\tau_x}{\tau_y} \tilde \fB_t\right].
\end{align*}
By $ \mu_y \tau_y \ge 2 \mu_x \tau_x$, we further have
\begin{align*}
      \BE \left[\tilde \fA_{t+1} +  \frac{\lambda \tau_x}{\tau_y} \tilde \fB_{t+1}\right] \le \frac{1}{\mu_x \tau_x K} \BE \left[ \tilde \fA_t + \frac{\lambda \tau_x}{\tau_y} \fB_t \right] \le \frac{1}{2}  \BE \left[ \tilde \fA_t + \frac{\lambda \tau_x}{\tau_y} \fB_t \right].
\end{align*}

Therefore, to find $\hat x$ such that $g(\hat x) - g(x^{\ast}) \le \epsilon$ and $ g(\hat x) - f(\hat x,\hat y) \le 24\epsilon$ in expectation,  the complexity is
\begin{align*}
    \mathcal{O}\left( (n+MK ) \log \left(\frac{1}{\epsilon}\right)\right) = \mathcal{O} \left( (n+K \sqrt{n})\log\left( \frac{1}{\epsilon}\right)\right) = \mathcal{O} \left((n + \sqrt{n} \kappa_x \kappa_y^2) \log \left(\frac{1}{\epsilon}\right) \right)
\end{align*}
\end{proof}

\section{Proof of Section \ref{sec: Catalyst-for-acc}}

In this section, we present the convergence results of AccSPIDER-GDA when $\gamma = 0$, now  $F_k$ can be written as: 
\begin{align*}
    \min_{x \in \BR^{d_x}} \max_{y \in \BR^{d_y}} F_k(x,y) \triangleq f(x,y) + \frac{\beta}{2} \Vert x - x_k \Vert^2.
\end{align*}
For convenience, we also define the associated primal and dual functions as:
\begin{align*}
    G_k(x) \triangleq \max_{y \in \BR^{d_y}} F_k(x,y) \quad {\rm and} \quad H_k \triangleq \min_{x \in \BR^{d_x}} F_k(x,y).
\end{align*}
First of all, we take a closer look at $F_k$. The regularization term $\beta$ transforms the condition number of the problem.
\begin{lem} \label{lem: sub-probelm}
Given $\beta > L$, if $f$ is $L$-smooth and $\mu_y$-PL in $y$, then the sub-problem  $F_k(x,y)$ is $(\beta-L)$-strongly convex in $x$ , $\mu_y$-PL in $y$ and $(\beta+L)$-smooth.
\end{lem}

Strong duality also holds for the sub-problem $F_k(x,y)$ since its saddle point exist.
\begin{lem} \label{lem: dual-Fk}
Under Assumption \ref{asm: L-smooth} and \ref{asm: one-PL}, given $\beta > L$, the sub-problem $F_k(x,y)$ exists at least one saddle point $(\tilde x_{k+1}, \tilde y_{k+1})$.
\end{lem}

\begin{proof}
According to Assumption \ref{asm: one-PL}, the maximization problem $\max_{\vy \in \BR^{d_y}} f(x,y)$ has a nonempty solution set. 
It is clear that $F_k(x,y)$ is strongly convex in $x$ for $\beta >L$. Hence, we know that $G_k(x) = \max_{y \in \BR^{d_y}} F_k(x,y)$ is strongly convex since taking the supremum is an operation that preserves (strong) convexity. Therefore, for any $\tilde y_{k+1} \in \arg \max_{y \in \BR^{d_y}} f(x,y)$, there exist a unique $\tilde x_{k+1} = \arg \min_{\vx \in \BR^{d_x}} G_k(x)$. 
Above, the point $(x^*,y^*)$ is a unique global minimax point of $F_k(x,y)$. And the global minimax point of $F_k(x,y)$ is equivalent to a saddle point of $F_k(x,y)$ by Lemma \ref{lem: three-points-equal}.
\end{proof}

From now on, throughout this section, we always let $(\tilde x_{k+1}, \tilde y_{k+1})$ be the saddle point of $F_{k}$ for all $k \ge 0$. Next, we study the error brought by the inexact solution to the sub-problem. The idea is that when we can control the precision of $F_k$ with a global constant $\delta$, then the algorithm will be close to the exact proximal point algorithm. In the following analysis, we measure the precision of sub-problems using its duality gap:
\begin{align*}
    {\rm Gap}_{k+1} \triangleq \max_{y \in \BR^{d_y}} F_k(x_{k+1}, y) - \min_{x \in \BR^{d_x}} F_k(x, y_{k+1}). 
\end{align*}
First, let us present two consequences when the duality of of sub-problem is small.
\begin{lem} \label{lem: approx-func}
Suppose $x_{k+1}$ satisfies $(x_{k+1},y_{k+1})$ satisfies $\BE[{\rm Gap}_{k+1}]\le \delta $ for any saddle point $(\tilde x_{k+1}, \tilde y_{k+1})$ of $F_k$, then it holds that 
\begin{align*}
   \BE[G_k(x_{k+1}) - G_k(\tilde x_{k+1})] \le \delta.
\end{align*}
\end{lem}

\begin{proof}
 By the definition of the duality gap, we have
\begin{align*}
    {\rm Gap}_{k+1} =& \max_{y \in \BR^{d_y}} F_k(x_{k+1}, y) 
    - F_k(\tilde x_{k+1},\tilde y_{k+1}) + F_k(\tilde x_{k+1},\tilde y_{k+1})
    - \min_{x \in \BR^{d_x}} F_k(x, y_{k+1}) \\
    \ge & \max_{y \in \BR^{d_y}} F_k(x_{k+1}, y) 
    - F_k(\tilde x_{k+1},\tilde y_{k+1}) = G_k(x_{k+1}) - G_k(\tilde x_{k+1}).
\end{align*}
Hence, $\BE[{\rm Gap}_{k+1}]\le \delta $ implies that $\BE[G_k(x_{k+1}) - G_k(\tilde x_{k+1})] \le \delta$.
\end{proof}


\begin{lem} \label{lem: approx-dist}
Under Assumption \ref{asm: L-smooth} and \ref{asm: one-PL}, given $\beta > L$, 
suppose $x_{k+1}$ satisfies $(x_{k+1},y_{k+1})$ satisfies $\BE[{\rm Gap}_{k+1}]\le \delta $ for any saddle point $(\tilde x_{k+1}, \tilde y_{k+1})$ of $F_k$, then it holds that 
\begin{align*}
   \Vert x_{k+1} - \tilde x_{k+1} \Vert^2 \le \frac{2\delta}{\beta-L}.
\end{align*}

\end{lem}

\begin{proof}
Since $G_k(\vx)$ is $(\beta-L)$-strongly convex, we have
\begin{align*}
    \Vert x_{k+1} - \tilde x_{k+1} \Vert^2 \le \frac{2}{\beta-L} \left( G_k(x_{k+1}) - G_k(\tilde x_{k+1})  \right) \le \frac{2 \delta}{\beta-L},
\end{align*}
where the last step sues Lemma \ref{lem: approx-func}.


\end{proof}

When the sub-problem is solved precisely enough, we can show that $g(x)$ decreases in each iteration.

\begin{lem} \label{lem:g(x)-decrease}
Under Assumption \ref{asm: L-smooth} and \ref{asm: one-PL}, given $\beta > L$, 
suppose $(x_{k+1},y_{k+1})$ satisfies $\BE[{\rm Gap}_{k+1}]\le \delta $ for any saddle point $(\tilde x_{k+1}, \tilde y_{k+1})$ of $F_k$, then it holds that 
\begin{align*} 
\BE[g(x_{k+1}) - g(x^{\ast})]  &\le \BE \left[g(x_k) - g(x^{\ast}) -  \frac{\beta}{2} \Vert x_{k+1} - x_k \Vert^2 \right] + \delta.
\end{align*}
\end{lem}

\begin{proof}
Consider the following inequalities:
\begin{align*} 
\BE[g(x_{k+1}) - g(x^{\ast})] &=  \BE \left[G_k(x_{k+1}) - g(x^{\ast}) - \frac{\beta}{2} \Vert x_{k+1} - x_k \Vert^2\right]  \\
&\le \BE \left[G_k(\tilde x_{k+1}) - g(x^{\ast})- \frac{\beta}{2} \Vert x_{k+1} - x_k \Vert^2  \right] + \delta \\
&\le \BE \left[g(x_k) - g(x^{\ast}) -  \frac{\beta}{2} \Vert x_{k+1} - x_k \Vert^2 \right] + \delta, 
\end{align*}
where in the first inequality we use Lemma \ref{lem: approx-func}, the second inequality is because we know it holds that 
$G_k(\tilde x_{k+1}) \le G_k(x_k) = g(x_k)$.
\end{proof}
Now, we consider how $g(x_k)$ converges to $g(x^{\ast})$ when precision $\delta$ is obtained.

\begin{lem}  \label{lem: g(x)-convergence}
Under Assumption \ref{asm: L-smooth} and \ref{asm: one-PL}, given $\beta > L$, suppose $(x_{k+1},y_{k+1})$ satisfies $\BE[{\rm Gap}_{k+1}]\le \delta $ for any saddle point $(\tilde x_{k+1}, \tilde y_{k+1})$ of $F_k$, then it holds that 
\begin{align*}
    \BE[g(x_{k+1}) - g(x^{\ast})] \le \BE \left[g(x_k) - g(x^{\ast}) - \frac{1}{4 \beta} \Vert \nabla g(\tilde x_{k+1}) \Vert^2 \right] +  \left( 
    \frac{2\beta}{\beta -L} + 1
    \right)\delta.
\end{align*}
\end{lem}

\begin{proof}
The proof is based on Lemma \ref{lem:g(x)-decrease}:
\begin{align*}
    \BE[g(x_{k+1}) - g(x^{\ast})] 
    &\le \BE \left[g(x_k) - g(x^{\ast}) -  \frac{\beta}{2} \Vert x_{k+1} - x_k \Vert^2\right] + \delta \\
    &\le \BE \left[g(x_k) - g(x^{\ast}) - \frac{\beta}{4} \Vert \tilde x_{k+1} - x_k \Vert^2 + \frac{\beta}{2} \Vert x_{k+1} - \tilde  x_{k+1}\Vert^2\right] + \delta \\
    &= \BE \left[g(x_k) - g(x^{\ast}) - \frac{1}{4 \beta} \Vert \nabla g(\tilde x_{k+1}) \Vert^2\right] + \left( 
    \frac{2\beta}{\beta -L} + 1
    \right)\delta, 
\end{align*}
where the second inequality relies on the fact that $- \Vert a - b\Vert^2 \le - \Vert a \Vert^2/2 + \Vert b \Vert^2$,
In the last inequality we use Lemma \ref{lem: approx-dist} and the fact that $(\tilde x_{k+1}, \tilde y_{k+1})$ is also a stationary point by Lemma \ref{lem: three-points-equal}, which implies that $ \nabla g( \tilde x_{k+1}) + \beta ( \tilde x_{k+1} - x_k ) = 0$. 
\end{proof}

\subsection{Proof of Lemma \ref{lem: outer-converge}}

\begin{proof}
Noting that $g(x)$ satisfies $\mu_x$-PL by Lemma \ref{lem: g(x)-PL} and using the result of Lemma \ref{lem: g(x)-convergence}, we obtain 
\begin{align*}
    &\quad \BE[g(x_{k+1})] \\
    &\le \BE \left[g(x_k) - \frac{1}{4 \beta} \Vert \nabla g(\tilde x_{k+1}) \Vert^2 \right] + \left( 
    \frac{2\beta}{\beta -L} + 1
    \right)\delta \\
    &\le \BE \left[g(x_k) - \frac{1}{8\beta} \Vert \nabla g(x_{k+1}) \Vert^2  +
    \frac{1}{4 \beta} \Vert \nabla g(\tilde x_{k+1}) - \nabla g(x_{k+1}) \Vert^2  \right] +
    \left( 
    \frac{2\beta}{\beta -L} + 1
    \right)\delta \\
    &\le \BE \left[g(x_k) - \frac{\mu_x}{ 4\beta} (g(x_{k+1}) - g(x^{\ast})) +  \frac{1}{4 \beta} \Vert \nabla g(\tilde x_{k+1}) - \nabla g(x_{k+1}) \Vert^2   \right] + \left( 
    \frac{2\beta}{\beta -L} + 1
    \right)\delta. 
\end{align*}
Recalling Lemma \ref{lem: g(x)-L-smooth} that $g(\vx)$ is $(2L^2/\mu_y)$-smooth and Lemma~\ref{lem: approx-dist} that $ \BE \Vert x_{k+1} - \tilde x_{k+1} \Vert^2 \le 2 \delta/(\beta-L)$, we obtain 
\begin{align*}
    \BE[g(x_{k+1})] \le \BE \left[g(x_k) - \frac{\mu_x}{4 \beta}(g(x_{k+1}) - g(x^{\ast}))\right] + \left( \frac{2}{\beta -L} \left( \frac{L^4}{\beta \mu_y^2} +\beta \right) +1 \right) \delta.
\end{align*}
Subtracting $g(x^\ast)$ from both sides and rearranging the above inequality, we obtain
\begin{align*}
    &\quad \BE [g(x_{k+1}) - g(x^{\ast})] \\ &\le \BE \left[\left(1 - \frac{\mu_x}{4 \beta + \mu_x}\right) (g(x_k) - g(x^{\ast}))\right] + \left(1 - \frac{\mu_x}{4 \beta + \mu_x}\right) \left( \frac{2}{\beta -L} \left( \frac{L^4}{\beta \mu_y^2} +\beta \right) +1 \right) \delta. 
\end{align*}
Let $q \triangleq {\mu_x}/{(4\beta+ \mu_x)}$ and telescope, then we can obtain that
\begin{align*}
    &\quad \BE[g(x_k) - g(x^{\ast})] \\
    &\le (1- q)^k (g(x_0) - g(x^{\ast})) + q \left( \frac{2}{\beta -L} \left( \frac{L^4}{\beta \mu_y^2} +\beta \right) +1 \right) \delta  \sum_{i=0}^{k-1}(1-q)^i \\
    &\le (1-q)^k (g(x_0) - g(x^{\ast})) + \left( \frac{2}{\beta -L} \left( \frac{L^4}{\beta \mu_y^2} +\beta \right) +1 \right) \delta.
\end{align*}

Plugging in $\beta, \delta$ and using $\mu_y \le L$ yields the desired statement,
\end{proof}

Now, we show how to control the precision of the sub-problem $\delta$ recursively to satisfy the condition of Lemma \ref{lem: outer-converge} that $\BE[{\rm Gap}_{k+1}] \le \delta$ holds for all $k \ge 1$.

Before that, we need to following lemma to use ${\rm Gap}_k$ to upper-bound the duality gap at the point $(x_k,y_k)$ with respect to the sub-problem $F_{k}$:
\begin{align*}
    {\rm Gap}'_{k} \triangleq \max_{y \in \BR^{d_x}} F_k(x_k,y) - \min_{x \in \BR^{d_x}} F_k(x,y_k).
\end{align*}

\begin{lem} \label{lem: bound-tilde-xy}
Under Assumption \ref{asm: L-smooth} and \ref{asm: one-PL}, if we let $\beta > L$, then it holds that
\begin{align*}
    {\rm Gap}_k' \le 24 {\rm Gap}_k + \frac{16 \beta^2 L^2}{\mu_y^2 (\beta-L)^2} \Vert x_{k} - x_{k-1} \Vert^2.
\end{align*}
\end{lem}

\begin{proof}
We start with the following identity for ${\rm Gap}'_{k}$: 
\begin{align*}
    {\rm Gap}'_{k} =& \max_{y \in \BR^{d_x}} F_k(x_k,y) - \min_{x \in \BR^{d_x}} F_k(x,y_k) \\
    =&  \max_{y \in \BR^{d_x}} F_k(x_k,y) - \min_{x \in \BR^{d_x}} \max_{y \in \BR^{d_y}} F_k(x,y) +  \max_{y \in \BR^{d_y}} \min_{x \in \BR^{d_x}} F_k(x,y)  - \min_{x \in \BR^{d_x}} F_k(x,y_k) \\
    =& G_k(x_k) - \min_{x \in \BR^{d_x}} G_k(x) + \max_{y \in \BR^{d_y}} H_k(y) - H_k(y_k),
\end{align*}
where the strong duality is guaranteed by Lemma \ref{lem: dual-Fk}. Next, Lemma \ref{lem: g(x)-L-smooth} indicates that $G_k(x)$ is $(2 L^2/ \mu_y + \beta)$-smooth, and similarly, $H_k(y)$ is $ 2 \beta (\beta+L) / (\beta-L)$-smooth. Therefore, for any saddle point $(\tilde x_{k+1}, \tilde y_{k+1})$ of the sub-problem $F_k$, we have
\begin{align} \label{eq:ub-gap-k-prime}
    {\rm Gap}'_k \le  \left( \frac{L^2}{\mu_y} + \frac{\beta}{2} \right) \Vert x_k - \tilde x_{k+1} \Vert^2 +  \frac{\beta (\beta +L)}{\beta - L} \Vert y_k - \tilde y_{k+1} \Vert^2.
\end{align}
Hence, using Young's inequality, for any saddle point $(\tilde x_k, \tilde y_k)$ of the sub-problem $F_{k-1}$,  we obtain
\begin{align*}
     {\rm Gap}'_k \le &\left( \frac{2L^2}{\mu_y} + \beta \right) (\Vert x_k - \tilde x_{k} \Vert^2 + \Vert \tilde x_{k+1} - \tilde x_k \Vert^2) \\
     +  & \frac{2\beta (\beta +L)}{\beta - L} (\Vert y_k - \tilde y_{k} \Vert^2 + \Vert \tilde y_{k+1} - \tilde y_k \Vert^2)
\end{align*}
Next, we let $\tilde y_k$ be the projection of~$y_k$ onto the set $\arg \max_{y \in \BR^{d_y}} H_{k-1}(y)$ and let $\tilde x_k = \arg \min_{x \in \BR^{d_x}} H_{k-1}(x,\tilde y_k)$ be the unique solution to the associated strongly convex sub-problem. Since $-H_{k-1}(y) $ is $\mu_y$-PL by Lemma \ref{lem: g(x)-PL} and similarly $G_{k-1}(x)$ is $(\beta-L)$-strongly convex, we can apply the quadratic growth condition by Lemma~\ref{lem: quadratic-growth} to obtain 
\begin{align*}
    \frac{\beta - L}{2} \Vert x_k - \tilde x_k \Vert^2 \le & G_{k-1}(x_k) - G_{k-1}(\tilde x_k), \\
    \frac{\mu_y}{2} \Vert y_k - \tilde y_k \Vert^2 \le & H_{k-1}(\tilde y_k) - H_{k-1}(y_k).
\end{align*}
Since the strong duality indicates that for the saddle point $(\tilde x_k,\tilde y_k)$, it holds that $G_{k-1}(\tilde x_k) = H_{k-1}(\tilde y_k)$. Hence, summing up the above inequalities yields
\begin{align*}
     \frac{\beta - L}{2} \Vert x_k - \tilde x_k \Vert^2 +  \frac{\mu_y}{2} \Vert y_k - \tilde y_k \Vert^2 \le {\rm Gap}_k.
\end{align*}
Then, for $\beta =2L$, for any saddle point $(\tilde x_{k+1}, \tilde y_{k+1})$ of the sub-problem $F_k$, we obtain
\begin{align*}
    {\rm Gap}'_k \le & \frac{4L^2}{\mu_y}(\Vert x_k - \tilde x_{k} \Vert^2 + \Vert \tilde x_{k+1} - \tilde x_k \Vert^2) +   12 L (\Vert y_k - \tilde y_{k} \Vert^2 + \Vert \tilde y_{k+1} - \tilde y_k \Vert^2) \\
     \le & 24 {\rm Gap}_k + \frac{4L^2}{\mu_y} \Vert \tilde x_{k+1} - \tilde x_k \Vert^2 + 12L \Vert \tilde y_{k+1} - \tilde y_k \Vert^2.
\end{align*}
For $\tilde y_k$ as the projection onto the set $\arg \max_{y \in \BR^{d_y}} H_{k-1}(y)$ as defined previously and $\tilde x_k = \arg \min_{x \in \BR^{d_x}} H_{k-1}(x)$, we have 
\begin{align*}
 \tilde y_k \in \arg \max_{y \in \BR^{d_y}} H_{k-1}(y) = \arg \max_{y \in \BR^{d_y}} F_{k-1}(\tilde x_k, y) =  \arg \max_{y \in \BR^{d_y}} f(\tilde x_k, y)   
\end{align*}
Then,
we know from Lemma \ref{lem: prox-argmax} that there exists 
\begin{align*}
 \tilde y_{k+1} \in \arg \max_{y \in \BR^d} H_k(x,y) = \arg \max_{y \in \BR^d} F_k(\tilde x_{k+1},y) = \arg \max_{y \in \BR^{d}} f(\tilde x_{k+1}, y)   
\end{align*}
for $\tilde x_{k+1} = \arg \min_{x \in \BR^{d_x}} H_k(x,y)$ such that 
\begin{align*}
 \Vert \tilde y_{k+1} - \tilde y_k \Vert \le \frac{L^2}{\mu_y^2} \Vert \tilde x_{k+1} - \tilde x_k \Vert.
\end{align*}
Therefore, we have
\begin{align*}
    {\rm Gap}'_k \le 24 {\rm Gap}_k + \frac{16L^2}{\mu_y^2} \Vert \tilde x_{k+1}- \tilde x_k \Vert^2.
\end{align*}
By the first-order optimality conditions, we have 
\begin{align*}
    \nabla f(\tilde x_{k+1}) + \beta (\tilde x_{k+1} - x_k) =&0, \\
    \nabla f(\tilde x_k) + \beta (\tilde x_k - x_{k-1}) =&0,
\end{align*}
which means that 
\begin{align*}
    \Vert \tilde x_{k+1} - \tilde x_k \Vert \le  
    \Vert x_k - x_{k-1} \Vert + \frac{1}{\beta} \Vert \nabla f(\tilde x_{k+1}) - \nabla f(\tilde x_k) \Vert \le  \Vert x_k - x_{k-1} \Vert + \frac{L}{\beta} \Vert \tilde x_{k+1} - \tilde x_k \Vert.
\end{align*}
For $\beta > L$, we have $\Vert \tilde x_{k+1} -\tilde x_k \Vert \le \beta \Vert x_k - x_{k-1} \Vert / (\beta -L)$. Therefore, 
\begin{align*}
    {\rm Gap}_k' \le 24 {\rm Gap}_k + \frac{16 \beta^2 L^2}{\mu_y^2 (\beta-L)^2 } \Vert x_{k} - x_{k-1} \Vert^2.
\end{align*}
\end{proof}

An additional bound for the first iteration is required.

\begin{lem} \label{lem:induction-base}
Under Assumption \ref{asm: L-smooth} and \ref{asm: one-PL}, if we let $\beta > L$, then it holds that
\begin{align*}
    {\rm Gap}_0' \le & \left( \left( \frac{L^2}{\mu_y} + \frac{\beta}{2} \right) +   \frac{4L^2\beta (\beta +L)}{\mu_y^2 (2 \beta -L)(\beta - L)} \right) (g(x_0) - g^*) + \frac{4 \beta (\beta+L)}{\mu_y(\beta-L)} ( g(x_0) - f(x_0,y_0) ),
\end{align*}
where $g^* = \inf_{x \in \BR^{d_x}} g(x)$.
\end{lem}

\begin{proof}
Recalling (\ref{eq:ub-gap-k-prime}), for any saddle point $(\tilde x_1, \tilde y_1)$ of $F_0$, we have
\begin{align*}
     {\rm Gap}'_0 \le  \left( \frac{L^2}{\mu_y} + \frac{\beta}{2} \right) \Vert x_0 - \tilde x_{1} \Vert^2 +  \frac{\beta (\beta +L)}{\beta - L} \Vert y_0 - \tilde y_{1} \Vert^2.
\end{align*}
Next, we separately upper-bound $\Vert x_0 - \tilde x_{1} \Vert^2$ and $\Vert y_0 - \tilde y_{1} \Vert^2$ as follows. 

On the one hand, since $G_0(x)$ is $(\beta-L)$-strongly convex, we have
\begin{align*}
    \Vert x_0 - \tilde x_1 \Vert^2 \le& \frac{2}{\beta -L} \left( G_0(x_0) - G_0(\tilde x_1) \right) \\
    = & \frac{2}{\beta -L} \left( g(x_0) - g(\tilde x_1) - \frac{\beta}{2} \Vert \tilde x_1 - x_0 \Vert^2 \right) \\
    \le & \frac{2}{\beta-L} \left( g(x_0) - g^* \right) - \frac{\beta}{\beta -L} \Vert x_0  -\tilde x_1 \Vert^2,
\end{align*}
which gives the following upper bound of $\Vert x_0 - \tilde x_{1} \Vert^2$ after rearranging:
\begin{align*}
    \Vert x_0 - \tilde x_1 \Vert^2 \le \frac{2}{2 \beta -L} ( g(x_0) - g^*).
\end{align*}
On the other hand, for $y^*(x_0)$ as the projection of $y_0$ to the set $\arg \max_{y \in \BR^{d_y}} f(x_0,y)$, we have
\begin{align*}
    \Vert y_0 - \tilde y_1 \Vert^2 \le & 2 \Vert y_0 - y^*(x_0) \Vert^2 + 2 \Vert y^*(x_0) - \tilde y_1 \Vert^2 \\
    \le& 2 \Vert y_0 - y^*(x_0) \Vert^2 + \frac{2L^2}{\mu_y^2} \Vert x_0 - \tilde x_1 \Vert^2,
\end{align*}
where we use Lemma \ref{lem: prox-argmax} and the fact that $\tilde y_1 \in \arg \max_{y \in \BR^{d_y}} f(\tilde x_1,y)$. Next, by the upper bound of $\Vert x_0 - \tilde x_{1} \Vert^2$ and Lemma \ref{lem: quadratic-growth}, we have
\begin{align*}
    \Vert y_0 - \tilde y_1 \Vert^2 \le & \frac{4}{\mu_y} ( g(x_0) - f(x_0,y_0) ) + \frac{4L^2}{\mu_y^2(2 \beta-L)} (g(x_0) - g^*).
\end{align*}
Therefore, combining the upper bounds of $\Vert x_0 - \tilde x_{1} \Vert^2$ and $\Vert y_0 - \tilde y_{1} \Vert^2$, we obtain 
\begin{align*}
    {\rm Gap}_0' \le & \left( \left( \frac{L^2}{\mu_y} + \frac{\beta}{2} \right) +   \frac{4L^2\beta (\beta +L)}{\mu_y^2 (2 \beta -L)(\beta - L)} \right) (g(x_0) - g^*) + \frac{4 \beta (\beta+L)}{\mu_y(\beta-L)} ( g(x_0) - f(x_0,y_0) ).
\end{align*}
\end{proof}

\subsection{The Proof of Lemma \ref{sub:eps-saddle}}

\begin{proof}
Recall we solve the sub-problem:
\begin{align*}
    \max_{y \in \BR^{d_y}} \min_{x \in \BR^{d_x}} F_k(x,y) = - \min_{x \in \BR^{d_x}} \max_{y \in \BR^{d_y}} \{ - F_k(x,y) \}.
\end{align*}
It is $\mu_y$-PL in $y$ and $L$-strongly-convex in $x$ and thus clearly satisfies $L$-PL in $x$.

We define  $H_k(y) = \min_{x \in \BR_{d_x}} F_k(x,y)$. By Lemma \ref{lem: g(x)-PL} and \ref{lem: g(x)-L-smooth}, we know that $H_k(y)$ is also $\mu_y$-PL in $y$ and it is $12L$-smooth since $F_k$ is $3L$-smooth.

According to Theorem \ref{thm: SPIDER-GDA}, we know that SPIDER-GDA with precision $\delta_k$ makes sure
\begin{align*}
&\quad \underbrace{\mathbb{E}\left[  H_{k}(\tilde y_{k+1}) - H_{k}( y_{k+1}) + \frac{1}{24} \left( F_{k}(x_{k+1},y_{k+1}) -H_{k}(y_{k+1}) \right) \right]}_{\rm LHS} \\
&\le \delta_k \underbrace{\mathbb{E} \left[H_{k}(\tilde y_{k+1}) - H_{k}( y_{k}) + \frac{1}{24} \left( F_{k}(x_{k},y_{k}) -H_{k}(y_{k}) \right)\right]}_{\rm RHS}
\end{align*}
for any $\tilde y_{k+1} \in \arg \max_{y \in \BR^{d_y}} H_k(y)$.

Next, we use the duality gaps ${\rm Gap}_{k+1}$ and ${\rm Gap}_k'$ to give a lower bound of the left-hand side (LHS) and an upper bound of the right-hand side (RHS).

On the one hand, 
since $G_k(x)$ is $(4L^2 / \mu_y)$-smooth,
for $\tilde x_{k+1} = \arg \min_{x \in \BR^{d_x}} G_k(x)$,
we have
\begin{align*}
    {\rm Gap}_{k+1} =& G_{k}(x_{k+1}) - \min_{x \in \BR^{d_x}} G_k(x) + \max_{y \in \BR^{d_y}} H_k(y) - H_k(y_{k+1}) \\
    \le & \frac{2 L^2}{\mu_y} \Vert x_{k+1} - \tilde x_{k+1} \Vert^2 + \max_{y \in \BR^{d_y}} H_k(y) - H_k(y_{k+1})
\end{align*}
Let $x_k^*(y) = \arg \min_{x \in \BR^{d_x}} F_k(x,y)$, which is unique due to the strong convexity of $F_k(\,\cdot\,,y)$. Then, for any saddle point $(\tilde x_{k+1}, \tilde y_{k+1})$ of $F_k$, we have
\begin{align*}
&\quad \Vert x_{k+1} - \tilde x_{k+1} \Vert^2  \\
&\le 2 \Vert x_{k+1} - x_k^*(y_{k+1}) \Vert^2 +2  \Vert x_k^*(y_{k+1}) - \tilde x_{k+1} \Vert^2  \\
&\le 2\Vert x_{k+1} - x^*(y_{k+1}) \Vert^2  + 18 \mathbb{E}\Vert y_{k+1} - \tilde y_{k+1} \Vert^2  \\
&\le 
\frac{4}{L} (F_{k} (x_{k+1},y_{k+1}) - H_{k}(y_{k+1}) ) + \frac{36}{\mu_y} ( H_{k}(\tilde y_{k+1}) - H_{k}(y_{k+1})).
\end{align*}
Above,
the first inequality is simply the Young's inequality; the second inequality uses Lemma \ref{lem: prox-argmax}, the uniqueness of $x_k^*(y)$, and $\tilde x_{k+1} = x_k^*(\tilde y_{k+1})$; the last inequality uses the fact that $F_k(x,y)$ is $L$-strongly convex in $x$, $H_k(y)$ is $\mu_y$-PL in $y$, and Lemma \ref{lem: quadratic-growth} for $\tilde y_{k+1}$ be the projection of $y_{k+1}$ onto the set $\arg \min_{y \in \BR^{d_y}} H_k(y)$.

Therefore, by simple calculations, we can lower-bound of LHS as
\begin{align*}
    {\rm Gap}_{k+1} \le \frac{192 L^2}{\mu_y^2} {\rm LHS}.
\end{align*}

On the other side, for the right-hand side (RHS), we can similarly upper-bound it using 
\begin{align*}
{\rm RHS} =& H_{k}(\tilde y_{k+1}) - H_{k}( y_{k}) + \frac{1}{24} \left( F_{k}(x_{k},y_{k}) -H_{k}(y_{k}) \right) \\
\le& H_{k}(\tilde y_{k+1}) - H_{k}( y_{k}) + \frac{L}{48} \Vert x_k - \tilde x_{k+1} \Vert^2 \\
\le& H_{k}(\tilde y_{k+1}) - H_{k}( y_{k}) + \frac{1}{24} (G_k(x_k) - G_k(\tilde x_{k+1}))
\end{align*}
Above, the first inequality uses the $L$-smoothness of $F_k(x,y)$ and holds for any saddle point $(\tilde x_{k+1}, \tilde y_{k+1})$ of $F_k$; the second inequality follows from Young's inequality; the third inequality holds because $G_k(x)$ is $L$-strongly convex.

Therefore, after algebraic calculations, we can simplify the above bound to
\begin{align*}
    {\rm RHS} \le \frac{25}{24} {\rm Gap}_k'.
\end{align*}
Combining both bounds for LHS and RHS and taking the expectation, we now obtain
\begin{align*}
    \BE[{\rm Gap}_{k+1}] \le \frac{192 L^2}{\mu_y^2} \BE[{\rm LHS}] \le \frac{192 L^2 \delta_k}{\mu_y^2} \BE[{\rm RHS}] \le \frac{200 L^2 \delta_k}{\mu_y^2} \BE[{\rm Gap}_k'].
\end{align*}
\end{proof}

Now it is sufficient to control $\delta$ recursively.

\begin{lem} \label{lem: two-side-error}
Under the same setting of Theorem \ref{thm: Catalyst-GDA},
if we solve each sub-problem $F_k$ with precision~$\delta_k$ as defined in Theorem \ref{thm: Catalyst-GDA}, then for all $k$ it holds that $\BE[{\rm Gap}_k] \le \delta$.
\begin{proof}

We prove by induction. Suppose we the following statement holds true for all $1 \le k' \le k$ that we have $\BE [{\rm Gap}_{k}'] \le \delta$.
Then, by Lemma \ref{sub:eps-saddle}, for $\delta_k' = 200L^2 \delta_k/\mu_y^2$, we have
\begin{align*}
&\quad \BE [{\rm Gap}_{k+1}] \le \delta_k' \BE[{\rm Gap}_k'] \\
&\le 24 \delta_k' \BE[ {\rm Gap}_k] + \frac{64 L^2 \delta_k'}{\mu_y^2} \Vert x_k - x_{k-1} \Vert^2 \\
&\le  24 \delta_k' \delta + \frac{64 L^2 \delta_k'}{\mu_y^2} \Vert x_k - x_{k-1} \Vert^2,
\end{align*}
where the second line uses Lemma \ref{lem: bound-tilde-xy} with $\beta =2L$ and the last line uses the induction hypothesis and.
Note that our choice of $\delta_k$ satisfies
\begin{align*}
    \max \left\{24 \delta_k' \delta, \frac{64 L^2 \delta_k'}{\mu_y^2} \Vert x_k - x_{k-1} \Vert^2\right \} \le \frac{\delta}{2}.
\end{align*}
Therefore, we have $\BE[{\rm Gap}_{k+1}] \le \delta$ and complete the induction from $k$ to $k+1$. For the induction base, we use Lemma \ref{lem:induction-base} with $\beta = 2L$ to obtain
\begin{align*}
    &\BE[{\rm Gap_1}] \le \delta_k' \BE[{\rm Gap}_0' ] \\
    \le &  \frac{10 \delta_k' L^2}{\mu_y^2} (g(x_0) - g(x^*)) + \frac{12 \delta_k'L}{\mu_y} ( g(x_0) - f(x_0,y_0)). 
\end{align*}
Therefore, the induction base $\BE[{\rm Gap}_1]$ can be satisfied by setting $\delta_k'$ ensuring that the right-hand side above is no larger than $\delta$.

%
\end{proof}

\end{lem}

\subsection{Proof of Theorem \ref{thm: Catalyst-GDA}}

Combing Lemma \ref{lem: outer-converge}, Lemma \ref{sub:eps-saddle} and Lemma \ref{lem: two-side-error}, we can easily prove Theorem \ref{thm: Catalyst-GDA}.

\begin{proof}
Note that each sub-problem $F_k$ is $3L$-smooth, $L$-PL in $x$ and $\mu_y$-PL in $y$ for $\beta  =2L$. Now if we choose  
\begin{align*}
    K = \left \lceil ((2 \beta+ \mu_x)/ \mu_x) \log (2/\epsilon) \right \rceil = \fO( \kappa_x \log (1/\epsilon) ),
\end{align*} 
then by Lemma \ref{lem: outer-converge} it is sufficient to guarantee that $\BE[g(x_K) - g(x^*)] \le \epsilon$,  while solving each sub-problem $F_k$ requires no more than $T_k \le a (n+ \sqrt{n } \kappa_y )  \log({\kappa_y}/{\delta_k})$ first-order oracle calls in expectation by Lemma \ref{sub:eps-saddle}, where $a$ is an independent positive constant. 

Now we telescope the inequality in Lemma \ref{lem:g(x)-decrease} and we can obtain
\begin{align} \label{eq: one-side-tele}
    \sum_{k=0}^{K-1} \BE \left[\Vert x_{k+1} - x_k \Vert^2\right] &\le \frac{2}{\beta} (g(x_0) - g^{\ast}) + \frac{2\delta}{\beta}.
\end{align}
Note that we have
\begin{align*}
    \frac{1}{\delta_k} \le \omega \times \max\left\{ 24, \frac{64 \kappa_y^2 \Vert x_k - x_{k-1} \Vert^2}{\delta } \right\} \le \omega \times \left(24 + \frac{64 \kappa_y \Vert x_k - x_{k-1} \Vert^2}{\delta }\right),
\end{align*}
by the choice of $\delta_k$ for all $k \ge 1$ in (\ref{dfn:delta_k}), where  $\omega = 400 \kappa_y^2$. Denote $C = a(n+ \sqrt{n}\kappa)$ for a numerical constant $a>0$ and $\fV_0 = g(x_0) - g(x^*) + g(x_0) - f(x,y_0)$, then we have
\begin{align}
\begin{split} \label{SFO:xk12}
&\quad \sum_{k=0}^{K} \BE[T_k]  = \BE[T_0] + \sum_{k=1}^{K} \BE[T_k] \\
 &\le C  \log \left( \omega \times \frac{6 \kappa_y^2 \fV_0}{ \delta}\right ) + C \sum_{k=1}^{K} \BE \left[\log \left( \frac{\kappa_y}{\delta_k} \right) \right] \\
&\le C  \log \left(  \omega \times \frac{6 \kappa_y^2 \fV_0}{ \delta} \right ) + C \sum_{k=1}^{K} \BE \left[\log \left( \omega \times\left(  24 \kappa_y + \frac{64 \kappa_y^3 \Vert x_k - x_{k-1} \Vert^2}{\delta} \right)\right) \right]\\
&\le  C  \log \left( \omega \times \frac{6 \kappa_y^2 \fV_0}{ \delta} \right ) +    C K  \log \left( \omega \times\sum_{k=1}^K \left(  24 \kappa_y + \frac{64 \kappa_y^3 \BE \Vert x_k - x_{k-1} \Vert^2}{\delta} \right)\right)   \\
&= C  \log \left(  \omega \times \frac{6 \kappa_y^2 \fV_0}{ \delta} \right ) +    C K  \log \left(\omega \times \sum_{k=0}^{K-1} \left(  24 \kappa_y + \frac{64 \kappa_y^3 \BE \Vert x_{k+1} - x_{k} \Vert^2}{\delta} \right)\right),
\end{split}
\end{align}
where the second inequality relies on the choice of $\delta_k$; the third inequality is  due to Jensen's inequality and the convexity of $\log(\,\cdot\,)$ function.

 
Lastly, we use (\ref{eq: one-side-tele}) and notice that $\delta$ defined in (\ref{dfn:delta-two}) is dependent on both $\epsilon$ and $\kappa_y$, while $\omega$ is dependent on $\kappa_y$ to show the SFO complexity (in expectation) of the order
\begin{align*}
    \fO ( (n \kappa_x + \sqrt{n} \kappa_x \kappa_y) \log (1/\epsilon) \log(\kappa_y / \epsilon)).
\end{align*}

\end{proof}

\section{Proof of Section \ref{sec: extension}} \label{apx: one-side}

First of all, we show the convergence of SVRG-GDA and SPIDER-GDA under one-sided PL condition as studied in Section \ref{sec: extension}. We reuse the lemmas under the two-sided PL condition. It is worth noticing that we can discard the outermost loop with respect to the restart strategy for both SVRG-GDA and SPIDER-GDA, i.e we set $T=1$ in this setting.

\subsection{SVRG-GDA under one-sided PL condition}

For SVRG-GDA, we have the following theorem. 
\begin{thm} \label{thm: SVRG-GDA-one}
Under Assumption \ref{asm: one-PL} and \ref{asm: L-smooth}, let $T = 1$ and $M,\tau_x,\tau_y,\lambda$  defined in Lemma \ref{lem: SVRG-GDA-V_k}; $\alpha = {2}/{3}$, $ SM = \lceil {8 }/{(\tau_x \epsilon^2)} \rceil$. Algorithm \ref{alg: Catalyst-GDA} can guarantee the output $\hat x$ to satisfy  $\Vert \nabla g(\hat x) \Vert^2 \le \epsilon$ in expectation with no more than $\fO(n+ { n^{2/3} \kappa_y^2 L}{\epsilon^{-2}})$ stochastic first-order oracle calls.

\begin{proof}

Telescoping for $k = 0,\dots, M-1$ and $s= 0,\dots, S-1$  for the inequality in Lemma \ref{lem: SVRG-GDA-V_k}:
\begin{align*}
    \frac{1}{SM} \sum_{s=0}^{S-1} \sum_{k=0}^{M-1} \BE [\Vert \nabla g(x_{s,k}) \Vert^2] \le \frac{8 \fV_{0,0}}{\tau_x SM}.
\end{align*}
Note that we have $M = \fO(n^{3\alpha/2})$ and if we let $ SM = \lceil {8 }/{(\tau_x \epsilon^2)} \rceil$, then $S = \fO( {L \kappa_y^2 }/({n^{\alpha/2}\epsilon^2)})$, so the complexity is
\begin{align*}
    \fO( n+ SM + Sn) = \fO\left( n+ \frac{\kappa_y^2 L ( n^{\alpha} + n^{1- \alpha/2})}{\epsilon^2} \right).
\end{align*}
Plugging in $\alpha = 2/3$ yields the desired complexity.
\end{proof}

\end{thm}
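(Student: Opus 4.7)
The plan is to invoke the per-step Lyapunov decrease that Lemma~\ref{lem: SVRG-GDA-V_k} already provides under exactly Assumption~\ref{asm: one-PL} and~\ref{asm: L-smooth} (its proof only uses the $\mu_y$-PL side, via Lemma~\ref{lem: new-bound-Bk}), and to convert the accumulated $\Vert\nabla g\Vert^2$ savings into a stationarity guarantee rather than into a linear-rate statement as in Theorem~\ref{thm: SVRG-GDA}.

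First I would check that the Lyapunov sequence chains cleanly across the $S$ epochs: since Lemma~\ref{lem: SVRG-GDA-V_k} imposes $c_{s,M}=d_{s,M}=0$ and the restart in Algorithm~\ref{alg: SVRG-GDA} sets $(x_{s+1,0},y_{s+1,0})=(x_{s,M},y_{s,M})$, the two quadratic correction terms vanish at both sides of the epoch boundary, so $\fV_{s+1,0}=\fV_{s,M}$. Dropping the non-positive $-\tfrac{\lambda\tau_x}{16}\Vert\nabla_y f\Vert^2$ term in Lemma~\ref{lem: SVRG-GDA-V_k} and telescoping for $k=0,\dots,M-1$ and $s=0,\dots,S-1$ with $T=1$ gives
\begin{align*}
\frac{\tau_x}{8}\sum_{s=0}^{S-1}\sum_{k=0}^{M-1}\BE\bigl[\Vert\nabla g(x_{s,k})\Vert^2\bigr]\le \fV_{0,0}-\BE[\fV_{S-1,M}]\le \fV_{0,0},
\end{align*}
where I use $\fA_{s,k}\ge 0$ (valid because Assumption~\ref{asm: one-PL} ensures $g^\ast>-\infty$) and $\fB_{s,k}\ge 0$ to drop the terminal Lyapunov. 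Since $\hat x$ is sampled uniformly from the $SM$ iterates, this yields $\BE\Vert\nabla g(\hat x)\Vert^2\le 8\fV_{0,0}/(\tau_x SM)$, and the prescription $SM=\lceil 8/(\tau_x\epsilon^2)\rceil$ together with Jensen's inequality then delivers $\BE\Vert\nabla g(\hat x)\Vert\le \epsilon$.

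Finally I would count SFO calls. Each of the $S$ middle iterations needs $n$ calls for the checkpoint full gradient plus $2BM=2M$ calls for the inner stochastic updates, so the total cost is $\fO(S(n+M))$. Plugging in the parameters from Lemma~\ref{lem: SVRG-GDA-V_k}, namely $\tau_x=\Theta(\mu_y^2/(L^3 n^\alpha))$ and $M=\Theta(n^{3\alpha/2})$, one obtains $SM=\Theta(L\kappa_y^2 n^\alpha\epsilon^{-2})$ and hence $Sn=\Theta(L\kappa_y^2 n^{1-\alpha/2}\epsilon^{-2})$. Balancing these two contributions through $1-\alpha/2=\alpha$ pins down $\alpha=2/3$, and the total cost becomes $\fO(n + n^{2/3}\kappa_y^2 L\epsilon^{-2})$ as claimed.

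The only genuine subtlety is the clean carry-over of the Lyapunov value across epoch boundaries, which is exactly what the design choice $c_{s,M}=d_{s,M}=0$ is meant to secure; everything else is a direct transcription of the two-sided SVRG-GDA argument in which the $\mu_x$-PL-in-$x$ contraction step is replaced by accumulating $\Vert\nabla g\Vert^2$ into a sublinear bound, and no new smoothness or concentration facts beyond those already granted by Lemma~\ref{lem: SVRG-GDA-V_k} are needed.
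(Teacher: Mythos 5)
Your proposal is correct and follows essentially the same route as the paper's proof: telescope the per-step Lyapunov decrease of Lemma~\ref{lem: SVRG-GDA-V_k} over all $SM$ inner iterations, use the uniform choice of $\hat x$ and $SM=\lceil 8/(\tau_x\epsilon^2)\rceil$ to bound $\BE\Vert\nabla g(\hat x)\Vert^2$, and balance the $SM$ versus $Sn$ cost terms at $\alpha=2/3$. The extra care you take with the epoch-boundary chaining (via $c_{s,M}=d_{s,M}=0$) is a detail the paper leaves implicit but is exactly the right justification.
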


\subsection{Proof of Theorem \ref{thm: SPIDER-GDA-one}}

Similarly to SVRG-GDA, we can also analyze the convergence of SPIDER-GDA.
\begin{proof}
By Lemma \ref{lem: key-lem-for-Spider}, the output satisfies $ \BE[ \Vert \nabla g(\hat x) \Vert] \le \epsilon$ under our choice of parameters.

Since $\tau_x = \fO(1/ (\kappa_y^2L) )$ and $M = B = \sqrt{n}$, the complexity becomes:
\begin{align*}
\fO\left( n + \frac{\sqrt{n}}{\tau_x \epsilon^2}\right)    = \fO\left( n + \frac{\sqrt{n} \kappa_y^2L}{\epsilon^2}\right).
\end{align*}
\end{proof}

Above, we have show that the complexity of SVRG-GDA is $ \fO( n+ n^{2/3} \kappa_y^2 L \epsilon^{-2}) $ and the complexity of SPIDER-GDA is $\fO(n + \sqrt{n} \kappa_y^2 L \epsilon^{-2})$ \footnote{ To be more precise, our theorem only suits the case when $ 1/ (\kappa_y^2 L \epsilon^2 ) > \sqrt{n}$ for SPIDER-GDA. If not, we can directly set $K = 2M$ to achieve the same convergence result.}. Thus, we can come to the conclusion that SPIDER-GDA strictly outperforms SVRG-GDA under both the two-sided and one-sided PL conditions. In the rest of this section, we mainly focus on the complexity of AccSPIDER-GDA under the one-sided PL condition. 

In the following lemma, we show that AccSPIDER-GDA converges when we can control the precision of solving each sub-problem with a global constant $\delta$.

\subsection{Proof of Lemma \ref{lem: outer-convergence-one-side}}
\begin{proof}
Similar to the proof under the two-sided PL condition, we begin our proof with Lemma \ref{lem: g(x)-convergence}. We can see that
\begin{align*}
    &\quad \BE[g(x_{k+1})] \\
    &\le \BE \left[g(x_k)  - \frac{1}{4 \beta} \Vert \nabla g(\tilde x_{k+1}) \Vert^2 + \left( \frac{2 \beta}{\beta -L} +1 \right) \delta \right]\\ 
    &\le \BE \left[g(x_k) - \frac{1}{8\beta} \Vert \nabla g(x_{k+1}) \Vert^2  +
    \frac{1}{4 \beta} \Vert \nabla g(\tilde x_{k+1}) - \nabla g(x_{k+1}) \Vert^2  \right] +
    \left( 
    \frac{2\beta}{\beta -L} + 1
    \right)\delta \\
    &\le  \BE \left[g(x_k) - \frac{1}{8\beta} \Vert \nabla g(x_{k+1}) \Vert^2 \right] +
    \left( \frac{2}{\beta -L} \left( \frac{L^4}{\beta \mu_y^2} +\beta \right) +1 \right) \delta,
\end{align*}
where we use the fact that $-\Vert a - b \Vert^2 \le - \Vert a \Vert^2/2 +\Vert b \Vert^2$ in the second inequality, $g(x)$ is $({2L^2}/{\mu_y})$-smooth by Lemma \ref{lem: g(x)-L-smooth} in the third one and $ \BE \Vert x_{k+1} - \tilde x_{k+1} \Vert^2 \le 2 \delta/(\beta-L)$ by Lemma~\ref{lem: approx-dist} in the last one.
Telescoping for $k = 0,1,2,...K-1$, we can see that 
\begin{align*}
    \frac{1}{8 \beta} \sum_{k=0}^{K-1} \BE \left[\Vert \nabla g(x_k) \Vert^2\right] &\le \BE \left[g(x_0) - g(x_K) \right] + \left( \frac{2}{\beta -L} \left( \frac{L^4}{\beta \mu_y^2} +\beta \right) +1 \right) K\delta \\
    &\le  \BE \left[g(x_0) - g^{\ast}\right] + \left( \frac{2}{\beta -L} \left( \frac{L^4}{\beta \mu_y^2} +\beta \right) +1 \right) K\delta.
\end{align*}
Divide  both sides by $K/ (8 \beta)$, then
\begin{align*}
    \frac{1}{K} \sum_{k=0}^{K-1} \BE \left[ \Vert \nabla g(x_k) \Vert^2\right] 
    &\le  \BE \left[\frac{8 \beta(g(x_0) - g^{\ast})}{K}\right] + \left( \frac{16 \beta}{\beta -L} \left( \frac{L^4}{\beta \mu_y^2} +\beta \right) +8 \beta \right) \delta.
\end{align*}
Plugging the choice of $\delta$ and $\beta =2L$ yields the desired inequality.
\end{proof}

\subsection{Proof of Theorem \ref{thm: Catalyst-GDA-one-side}}

Combing Lemma \ref{lem: outer-convergence-one-side}, Lemma \ref{sub:eps-saddle} and Lemma \ref{lem: two-side-error}, we can easily prove Theorem \ref{thm: Catalyst-GDA-one-side}. We remark that both the proof of Lemma \ref{sub:eps-saddle} and Lemma \ref{lem: two-side-error} only use the PL property in the direction of $y$, so they can both be directly applied to the one-sided PL case.

\begin{proof}
Note that each sub-problem $F_k$ is $3L$-smooth, $L$-PL in $x$ and $\mu_y$-PL in $y$ for $\beta  =2L$. Now, if we choose  
\begin{align*}
    K = \left \lceil 16 \beta (g(x_0) - g^*)/ \epsilon^2 \right \rceil = \fO(L \epsilon^{-2} ),
\end{align*} 
then by Lemma \ref{lem: outer-convergence-one-side} it is sufficient to guarantee that $\BE[ \Vert g(\hat x) \Vert] \le \epsilon$,  while solving each sub-problem $F_k$ requires no more than $T_k \le a (n+ \sqrt{n } \kappa_y )  \log({\kappa_y}/{\delta_k})$ first-order oracle calls in expectation by Lemma \ref{sub:eps-saddle}, where $a$ is an independent positive constant. 

Therefore,  using (\ref{eq: one-side-tele}), (\ref{SFO:xk12}) and noticing that $\delta$ defined in (\ref{dfn:delta-one}) is dependent on both $\epsilon$ and $\kappa_y$, awhile $\omega$ in (\ref{SFO:xk12}) is dependent on $\kappa_y$ to show the SFO complexity of the order
\begin{align*}
    \fO ( (n  + \sqrt{n}  \kappa_y) L \epsilon^{-2} \log(\kappa_y / \epsilon)).
\end{align*}

\end{proof}

\end{document}